\documentclass{amsart}

\usepackage{amsmath,amssymb,amsthm,mathtools,mathrsfs}
\usepackage{enumerate}
\usepackage{booktabs}
\usepackage{array}
\usepackage{amssymb}
\usepackage{enumerate}
\usepackage{color}
\usepackage{esint}

\usepackage[
colorlinks=true,      
linkcolor=blue,      
citecolor=blue,       
urlcolor=blue       
]{hyperref}

\newcommand{\N}{\mathbb N}

\numberwithin{equation}{section}

\newtheorem{thm}{Theorem}[section]
\newtheorem{lem}[thm]{Lemma}
\newtheorem{prop}[thm]{Proposition}

\theoremstyle{definition}

\theoremstyle{remark}
\newtheorem{rem}[thm]{Remark}

\newcommand{\1}{\mathbf 1}
\newcommand{\BMO}{\mathrm{BMO}}
\newcommand{\T}{\mathsf T}

\title{Sharp Fractional Riesz Estimates on the Hypercube}

\author[Jiao]{Yong Jiao}
\address{School of Mathematics and Statistics, HNP-LAMA, Central South University, Changsha 410075, China}
\email{jiaoyong@csu.edu.cn}

\author[Luo]{Sijie Luo}
\address{School of Mathematics and Statistics, HNP-LAMA, Central South University, Changsha 410075, China}
\email{sijieluo@csu.edu.cn}

\author[Zanin]{Dmitriy Zanin}
\address{School of Mathematics and Statistics, HNP-LAMA, Central South University, Changsha 410075, China}
\email{d.zanin@csu.edu.cn}

\author[Zhou]{Dejian Zhou}
\address{School of Mathematics and Statistics, HNP-LAMA, Central South University, Changsha 410075, China}
\email{zhoudejian@csu.edu.cn}

\keywords{Hypercube, Riesz transform, martingales, BMO spaces.}
\thanks{2020  Mathematics Subject Classification. Primary: 60G42; Second: 42C10, 41A17}

\date{}

\begin{document}

\begin{abstract}
Let $\Omega_{n}=\{-1,1\}^n$ be the $n$-dimensional hypercube equipped with the normalized uniform measure, let $\nabla$ be the Walsh gradient and let $\Delta$ be the Walsh Laplacian.  For
every $1<p\leq 2$ we prove the following estimate
\[
\|\nabla f\|_{L_p(\Omega_n;\ell_2^n)} \leq c_{{\rm abs}}(p-1)^{-2}\|\Delta^{1/p}f\|_{L_p(\Omega_n)}.
\]
The exponent $\frac1p$ is optimal, thus this settles the open problem on the sharp fractional Riesz estimate by Efraim and Lust-Piquard \cite{E-LP2008} which was subsequently highlighted by Ivanisvili and Volberg \cite{I-V2022}. We also establish the higher-order counterpart. As applications of our results, we obtain simpler proofs of the optimal short-time estimate for $\nabla e^{-t\Delta}$, and the Bernstein-Markov type inequality for $d$-bounded degree functions.
\end{abstract}

\maketitle


\section{Introduction}

For $n\in\N$, let $\Omega_{n}=\{-1,1\}^{n}$ be the $n$-dimensional hypercube with normalized uniform measure and let $L_{p}(\Omega_{n})$ be the usual $L_{p}$ space for $1\leq p\leq \infty$.  We write $D_{j}$ for the $j$-th Walsh derivative, $\nabla f=\{D_{j}f\}_{j=1}^{n}$ for the discrete gradient, $\Delta=\sum_{j=1}^{n}D_{j}$ for the Walsh number operator, and $P_{t}=e^{-t\Delta}$ for the heat semigroup. For more details on the analysis of hypercubes such as Fourier analysis theory and spectral calculus, we refer to Section \ref{sec:preliminaries}.

The analysis on hypercube lies at interface between Fourier analysis, martingale theory, and Markov semigroups, which play an essential role in computer science and information theory. For the Hilbertian case, the following identity holds
\begin{equation}\label{eq:intro-energy}
 \|\nabla f\|_{L_{2}(\Omega_{n};\ell_{2}^{n})}^{2}
 =\sum_{j=1}^{n}\|D_{j}f\|_{L_{2}(\Omega_{n})}^{2}
 =\langle\Delta f,f\rangle
 =\|\Delta^{1/2}f\|_{L_{2}(\Omega_{n})}^{2}.
\end{equation}
Accordingly, $\nabla\Delta^{-1/2}$ is an isometry from $L_{2}(\Omega_{n})$ into $L_{2}(\Omega_{n};\ell_{2}^{n})$.  The classical estimate for the (discrete) Riesz transform focus on this comparison on $L_{p}(\Omega_{n})$ with constants independent of $n$. In 1998, Lust-Piquard \cite{LustPiquard1998} used tools from noncommutative analysis to prove that for every $2<p<\infty$, 
\begin{equation}\label{eq:classical-upper-riesz} 
\|\nabla f\|_{L_{p}(\Omega_{n};\ell_{2}^{n})} \asymp_{p} \|\Delta^{1/2}f\|_{L_{p}(\Omega_{n})}, 
\end{equation} 
with dimension-free constants. However, the situation is fundamentally different when $1<p<2$. In the same paper, Lust-Piquard recorded a counterexample communicated by Lamberton showing that the upper estimate
\begin{equation}\label{eq:failed-half-power-riesz}
\|\nabla f\|_{L_{p}(\Omega_{n};\ell_{2}^{n})} \leq C_{p}\|\Delta^{1/2}f\|_{L_{p}(\Omega_{n})}
\end{equation}
cannot hold with a constant $C_{p}$ independent of $n$. This obstruction is commonly referred to as Lamberton's counterexample.

\noindent For $0<\beta<1$ and $n\in\mathbb N$, let
\[
\mathfrak R_{p,\beta}(n)=\left\|\nabla\Delta^{-\beta}\right\|_{L_{p}(\Omega_{n})\to L_{p}(\Omega_{n};\ell_{2}^{n})}
\]
be  the smallest constant such that
\[
 \|\nabla f\|_{L_{p}(\Omega_{n};\ell_{2}^{n})} \leq\mathfrak R_{p,\beta}(n)\|\Delta^{\beta}f\|_{L_{p}(\Omega_{n})}
\]
holds for every $f:\Omega_{n}\to\mathbb C$. Since half-power estimate fails for $1<p<2$, this leads to the following fundamental question: for which exponents $0<\beta<1$ does one have 
\[
\sup_{n\geq1}\mathfrak R_{p,\beta}(n)<\infty?
\]
Any such exponent must satisfy $\beta\geq1/p$ whenever $1<p<2$. This necessary condition follows from an argument attributed to Naor and Schechtman. This was stated by Efraim and Lust-Piquard in \cite[Lemma 5.5]{E-LP2008}. Indeed, for $F_n=\1_{\{(1,\ldots,1)\}},$ we have
\[
\|\nabla F_n\|_{L_p(\Omega_n;\ell_2^n)}\gtrsim_p n^{\frac1p}\|F_n\|_{L_p(\Omega_n)},\quad \|\Delta^{\beta} F_n\|_{L_p(\Omega_n)}\lesssim n^{\beta}\|F_n\|_{L_p(\Omega_n)},\quad 1<p<2.
\]
Hence, $\sup_{n\geq1}\mathfrak{R}_{p,\beta}(n)<\infty$ only if $\beta\geq\frac{1}{p}$. We now summary known results on the dimension-dependent upper bounds of $\mathfrak R_{p,\beta}(n)$ as follows.
\begin{prop}[\cite{E-LP2008, IVHV2020}]\label{prop:intro-known-dimensional}
Let $1<p<2$ and $0<\beta<1$.  Then
\begin{equation}\label{eq:intro-known-dimensional-upper}
 \mathfrak R_{p,\beta}(n)
 \lesssim_{p,\beta}
 \begin{cases}
  n^{1/p-\beta},
     &0<\beta<1/p,\\[2mm]
  1+\log n,
     &\beta=1/p,\\[2mm]
  1,
     &1/p<\beta<1.
 \end{cases}
\end{equation}
Moreover, when $0<\beta<1/p$,
\begin{equation}\label{eq:intro-known-dimensional-lower}
 \mathfrak R_{p,\beta}(n)
 \gtrsim_{p,\beta}
 n^{1/p-\beta}.
\end{equation}
Consequently, the dimensional power in the first line of
\eqref{eq:intro-known-dimensional-upper} is optimal.
\end{prop}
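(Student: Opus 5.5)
We prove the three upper bounds in \eqref{eq:intro-known-dimensional-upper} by interpolating between two elementary ingredients, and the lower bound \eqref{eq:intro-known-dimensional-lower} by testing on the Naor--Schechtman function $F_n$.

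\textbf{Step 1: $\beta=1$, and the high-frequency part for $\beta>1/p$.} The first ingredient is the dimension-free bound
\[
\|\nabla f\|_{L_p(\Omega_n;\ell_2^n)}\lesssim_p \|\Delta f\|_{L_p(\Omega_n)},\qquad 1<p<2 .
\]
Since $\|\cdot\|_{\ell_2^n}\le\|\cdot\|_{\ell_p^n}$ for $p\le2$, it suffices to show
\[
\sum_j\|D_jf\|_p^p\lesssim\|\Delta f\|_p^p .
\]
For this we use martingale/Stein-type square function arguments: $D_jf$ is a martingale-difference-like projection. For general $\beta>1/p$ we write
\[
\nabla\Delta^{-\beta}=\nabla\Delta^{-1}\cdot\Delta^{1-\beta}
\]
and control $\nabla\Delta^{-\beta}$ on the range of $\Delta$ through the subordination formula
\[
\Delta^{-\beta}=\Gamma(\beta)^{-1}\int_0^\infty t^{\beta-1}P_t\,dt .
\]
The key estimate is a short-time bound
\[
\|\nabla P_t\|_{p\to p}\lesssim_p t^{-1/p}\quad(t\le1),
\]
together with exponential decay $e^{-t}$ for large $t$, which holds on mean-zero functions since the spectral gap is $1$. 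Inserting these into the integral, the small-$t$ contribution is $\int_0^1 t^{\beta-1-1/p}\,dt$. It is finite exactly when $\beta>1/p$; it diverges logarithmically at $\beta=1/p$; and for $\beta<1/p$ it must be truncated at $t\sim1/n$, where the trivial bound $\|\nabla P_t\|\lesssim n^{1/p}$ takes over. This produces the three regimes $1$, $1+\log n$, and $n^{1/p-\beta}$.

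\textbf{Step 2: the short-time bound, which is the main obstacle.} The trivial bound comes from $\|D_jg\|_p\le2\|g\|_p$ combined with $\|\nabla g\|_{\ell_2}\le n^{1/2-1/p}\bigl(\sum_j|D_jg|^p\bigr)^{1/p}\cdot$ appropriate factors; more simply, $\|\nabla g\|_{L_p(\ell_2)}\le\|\nabla g\|_{L_p(\ell_p)}\le 2n^{1/p}\|g\|_p$. For $\|\nabla P_t\|\lesssim t^{-1/p}$ we use the tensor structure
\[
D_jP_t=(1-e^{-t})\,P_t\,\tilde D_j ,
\]
so that, via the $\ell_p$ comparison,
\[
\sum_j\|D_jP_tf\|_p^p\le(1-e^{-t})^p\sum_j\|P_t^{(j)}\text{-smoothed }D_jf\|_p^p .
\]
Here we would invoke the known $L_p$ Poincaré/heat-flow estimate
\[
\sum_j\|D_jP_tf\|_p^p\lesssim t^{-1}\|f\|_p^p ,
\]
obtained by differentiating $\|P_tf\|_p^p$ in time and using convexity of $|x|^p$ coordinatewise. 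This derivative identity is the delicate point: we would first try the $\Gamma$-calculus bound
\[
-\tfrac{d}{dt}\|P_tf\|_p^p\gtrsim_p\sum_j\|D_jP_tf\|_p^p ,
\]
which is valid since each $D_j$ acts on a two-point space. Integrating over $[t/2,t]$ and using monotonicity gives the $t^{-1}$ bound.

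\textbf{Step 3: lower bound.} For $\beta<1/p$ take $F_n=\1_{\{(1,\dots,1)\}}$. At each point, $|\nabla F_n|_{\ell_2}$ is of order $\sqrt n$ at the vertex $(1,\dots,1)$ and of order $1$ at its $n$ neighbours. Hence
\[
\|\nabla F_n\|_p\gtrsim (n\,2^{-n})^{1/p}=n^{1/p}\|F_n\|_p .
\]
For the other side, $\|\Delta^\beta F_n\|_p\le\|\Delta\|_{p\to p}^{\beta}$ by the moment inequality for sectorial operators (Heinz/complex interpolation, or subordination as above). Since $\|\Delta\|_{p\to p}\lesssim n$, this gives $\|\Delta^\beta F_n\|_p\lesssim_{p,\beta} n^\beta\|F_n\|_p$, and therefore $\mathfrak R_{p,\beta}(n)\gtrsim n^{1/p-\beta}$.
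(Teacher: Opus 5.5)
The paper only cites the upper bounds; it argues just the lower bound, and your Step 3 is the same Naor--Schechtman computation with $F_n=\1_{\{(1,\dots,1)\}}$. Your Step 1 architecture is sound. Subordination, a short-time bound $\|\nabla P_t\|_{L_p\to L_p(\Omega_n;\ell_2^n)}\lesssim_p t^{-1/p}$ for $t\le1$, the trivial bound $n^{1/p}$ for $t\lesssim 1/n$, and decay for $t\ge1$ do give the three regimes.

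\textbf{The gap.} The short-time bound carries the whole argument, and it is never established. Every estimate you propose for it goes through the $\ell_p$-sum $\sum_j\|D_jg\|_p^p$, and the inequalities you assert for that sum fail for $p<2$, uniformly in $n$. Take $f=\sum_{j=1}^n x_j$. Then $D_jP_tf=e^{-t}x_j$, so $\sum_j\|D_jP_tf\|_p^p=ne^{-pt}$. On the other hand, $\|f\|_p^p\le n^{p/2}$ and $-\frac{d}{dt}\|P_tf\|_p^p=pe^{-pt}\|f\|_p^p\le pe^{-pt}n^{p/2}$. Hence neither $\sum_j\|D_jP_tf\|_p^p\lesssim t^{-1}\|f\|_p^p$ nor $-\frac{d}{dt}\|P_tf\|_p^p\gtrsim_p\sum_j\|D_jP_tf\|_p^p$ can hold. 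The same $f$, with $\Delta f=f$, also refutes $\sum_j\|D_jf\|_p^p\lesssim\|\Delta f\|_p^p$ in Step 1, though that $\beta=1$ ingredient is not needed anyway.

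The two-point convexity argument only gives
\[
(a-b)\bigl(|a|^{p-2}a-|b|^{p-2}b\bigr)\ge(p-1)|a-b|^2(|a|+|b|)^{p-2},
\]
not $\gtrsim|a-b|^p$; take $a=M+1$, $b=M$, $M\to\infty$. Replacing $\ell_2^n$ by $\ell_p^n$ discards exactly the cross-coordinate cancellation. It costs up to a factor $n^{1/p-1/2}$, which is the very dimensional loss at stake. Separately, $D_jP_t$ equals $e^{-t}D_j$ composed with the heat semigroup in the other coordinates, so the factor is $e^{-t}$, not $1-e^{-t}$.

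\textbf{A route that works.} Keep the $\ell_2$ structure and use the Ivanisvili--van Handel--Volberg representation. Let $\xi=\xi(t)$ have independent coordinates with $\mathbb{P}(\xi_j=\pm1)=\frac{1\pm e^{-t}}{2}$, and put $\delta_j=\frac{\xi_j-e^{-t}}{\sqrt{1-e^{-2t}}}$. Then
\[
D_jP_tf(x)=\frac{e^{-t}}{\sqrt{1-e^{-2t}}}\,\mathbb{E}_\xi\bigl[\delta_j\,f(x\xi)\bigr].
\]
Pair with $h\in L_{p'}(\Omega_n;\ell_2^n)$, where $p'=\frac{p}{p-1}$, and make the measure-preserving substitution $x\mapsto x\xi$. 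This reduces matters to bounding $\bigl\|\sum_j\delta_ja_j\bigr\|_{L_{p'}}$ for fixed scalars $a_j$.

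For $t\le1$ one has $\mathbb{E}|\delta_j|^{p'}\lesssim_p t^{1-p'/2}$. Rosenthal's inequality then gives $\bigl\|\sum_j\delta_ja_j\bigr\|_{L_{p'}}\lesssim_p t^{1/p'-1/2}\|a\|_{\ell_2}$. Multiplying by $\frac{e^{-t}}{\sqrt{1-e^{-2t}}}\le(2t)^{-1/2}$ yields the bound $t^{-1/p}$.

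For $t\ge1$, decay of $P_t$ on mean-zero functions in $L_p$ does not follow from the $L_2$ spectral gap alone. Use hypercontractivity as in Lemma~\ref{exponential decay lemma}, or Riesz--Thorin between the $L_1$ bound $2$ and the $L_2$ bound $e^{-t}$.
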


On the other hand, a dimension-free estimate due to Naor was recorded, with a new proof, by Eskenazis and Ivanisvili \cite[Proposition 43]{E-I2020}.  More precisely, for every
$1<p<2$ and every $0<\varepsilon<1/2$, there exists a constant $C_{p}>0$ depending only on $p$, such that
\begin{equation}\label{eq:epsilon-loss-riesz}
 \|\nabla f\|_{L_{p}(\Omega_{n};\ell_{2}^{n})}
 \leq\left(\frac{C_{p}}{\varepsilon}\right)\left\|\Delta^{\frac{1}{p}+\varepsilon}f\right\|_{L_{p}(\Omega_{n})}
\end{equation}
for every scalar-valued function $f$ on $\Omega_{n}$. A later Banach space-valued result of Ivanisvili and Volberg \cite[Theorem 4.1]{I-V2022}, combined with the duality argument, also implies an estimate of the form \eqref{eq:epsilon-loss-riesz} with an unspecified constant $C_{p,\varepsilon}$ for $0<\varepsilon<(p-1)/p$. 

At the critical power $\beta=1/p$, however, the Riesz-product obstruction in \eqref{eq:intro-known-dimensional-lower} becomes dimension-independent,
while the previously available upper bound retains the factor
$1+\log n$. Ivanisvili and Volberg \cite{I-V2022} therefore posed the dimension-free endpoint
\[
 \sup_{n\geq1}\mathfrak R_{p,1/p}(n)<\infty
\]
as an open problem \cite[Section 8.1]{I-V2022} (we also refer the reader to \cite{ChenDai2026,DIPV2026} for some recent developments  about Riesz transforms on the hypercube).

In this paper, we introduce a probabilistic representation that preserves the
cancellation structure lost in the classical semigroup approach.  This allows us
to remove the $\varepsilon$-loss in the previously known estimate \eqref{eq:epsilon-loss-riesz},
improving the power $\Delta^{1/p+\varepsilon}$ to the critical power
$\Delta^{1/p}$ and thereby resolving the corresponding open problem.  Our first main theorem is now stated as follows.

\begin{thm}\label{thm:main}
For every $1<p\leq 2,$ for every
$n\in\N$ and every $f:\Omega_n\to\mathbb{C},$ we have
\[
\left\|\left(\sum_{j=1}^n|D_jf|^2\right)^{\frac12}\right\|_{L_p(\Omega_n)} \leq c_{{\rm abs}}(p-1)^{-2}\|\Delta^{\frac1p}f\|_{L_p(\Omega_n)}.
\]
\end{thm}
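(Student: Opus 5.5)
We reduce to the operator $\nabla\Delta^{-1/p}$ on mean-zero functions and represent it probabilistically. Write $g=\Delta^{1/p}f$ with $\mathbb E g=0$, so that
\[
\Delta^{-1/p}g=\frac{1}{\Gamma(1/p)}\int_0^\infty t^{\frac1p-1}P_tg\,dt .
\]
Then $D_j\Delta^{-1/p}g=\Gamma(1/p)^{-1}\int_0^\infty t^{1/p-1}D_jP_tg\,dt$. The classical approach estimates $\|\nabla P_t g\|_{L_p(\ell_2)}$ pointwise in $t$. That bound is of order $t^{-1/p}$ for small $t$ (the optimal short-time estimate), so integrating against $t^{1/p-1}$ produces a $\log$ divergence, which is exactly the $1+\log n$ loss. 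To avoid it, we keep the $t$-integral inside the norm.

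We realize $P_t$ by the noise operator. Let $\xi(t)=(\xi_1(t),\dots,\xi_n(t))$ be independent $\pm1$ coordinates, each rerandomized at rate one, and use $D_jP_tg(x)=e^{-t}\,\mathbb E[\,x_j\xi_j'\, g(x\xi')\dots]$. More concretely, $D_j P_t g = e^{-t}P_t D_j g$ together with the formula $D_j h(x)=x_j\,\mathbb E_{y_j}[y_j h(\dots y_j\dots)]$. The idea is to condition on the first resampling time $\tau_j\sim\mathrm{Exp}(1)$ of coordinate $j$. Averaging $t^{1/p-1}e^{-t}$ over $t$ turns into an expectation over independent exponential clocks. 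The resulting kernel is a martingale-difference-type expression indexed by the order in which coordinates get refreshed. Precisely, order coordinates by a uniformly random permutation $\sigma$ (the order of clock rings). Then $D_j\Delta^{-1/p}g$ is an average over $\sigma$ of $d_{k}(g)$ along the filtration generated by the coordinates $\sigma(1),\dots,\sigma(k)$, with the weight $w_k\approx (k/n)^{\dots}$ coming from $\int t^{1/p-1}(1-e^{-t})^{k}e^{-(n-k)t}\,dt$-type Beta integrals.

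The $L_p(\ell_2)$ norm of $\nabla\Delta^{-1/p}g$ is then bounded in three steps:
\begin{enumerate}
\item Use Minkowski/convexity to pull the average over $\sigma$ outside.
\item For each fixed ordering, apply Burkholder's square function inequality, with constant $\lesssim (p-1)^{-1}$, together with a Stein-type inequality for conditional expectations, which contributes another factor $(p-1)^{-1}$. This accounts for the $(p-1)^{-2}$.
\item Control the deterministic weights $w_k$.
\end{enumerate}
The weights $w_k$ are where the exponent $1/p$ is exactly critical. The Beta integral behaves like $\binom{n}{k}^{-1}$ times $k^{-(1-1/p)}$-type factors. For $p\le 2$ we want to compare $\sum_k w_k^2|d_k|^2$ with a quantity bounded by $\|g\|_p^p$ through the inequality $\|(\sum|d_k|^2)^{1/2}\|_p^p\le \sum_k\|d_k\|_p^p$-type bounds valid for $p\le2$. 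Combining the $p$-th power summation with the factors $k^{-1/p}$ should make the sum over $k$ telescope into a bounded, dimension-free quantity, rather than a harmonic sum giving $\log n$.

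The main obstacle is this last step: showing that at $\beta=1/p$ the weights combine with the $\ell_p$-summability of martingale differences (valid for $p\le 2$) without losing a logarithm. I would first try a weighted version of the estimate $\|\sum_k a_k d_k\|_p\lesssim\|g\|_p$ for predictable multipliers $a_k$ of bounded variation, in the style of Burkholder's transforms. Here the key point is that $k\mapsto k^{1/p}w_k$ is monotone, so Abel summation reduces the weighted sum to maximal partial sums, which Doob's inequality controls at the cost of a further $(p-1)^{-1}$.
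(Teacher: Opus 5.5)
\textbf{There is a genuine gap at Step 1, and nothing you do per ordering in Steps 2--3 can repair it.}

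Conditioning on the first resampling time, as you describe, gives the exact one-clock identity $D_j\Delta^{-1/p}g=\Gamma(1/p)^{-1}\mathbb{E}_\tau\bigl[\tau_j^{\frac1p-1}P_j^{\tau}g\bigr]$. Here $P_j^\tau$ keeps those $w_A$ with $j\in A$ and $\tau_j=\min_{i\in A}\tau_i$. For a fixed ordering of the clocks, $P_j^\tau g$ is the martingale difference revealing $x_j$, with coordinates revealed in decreasing order of $\tau$. After conditioning on the ordering, its weight is $\mathbb{E}[\tau_{(k)}^{1/p-1}]$, where $k$ is the ring position of $j$. The first coordinate to ring therefore carries the weight $\mathbb{E}[(\min_i\tau_i)^{1/p-1}]=\Gamma(1/p)\,n^{1-1/p}$. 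This value is forced by exactness: test the identity on $w_{[n]}$.

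Now take $g=w_{[n]}=x_1\cdots x_n$. For every ordering, $g$ is a single martingale difference. The per-ordering vector field is then $n^{1-1/p}w_{[n]}e_{j_*}$, where $j_*$ is the first coordinate to ring, and its $L_p(\Omega_n;\ell_2^n)$-norm is $n^{1-1/p}$. By contrast, $\|g\|_{L_p}=1$ and $\|\nabla\Delta^{-1/p}g\|_{L_p(\Omega_n;\ell_2^n)}=n^{\frac12-\frac1p}\le 1$. So Minkowski over orderings loses a factor $\sqrt n$, even at $p=2$ where $\nabla\Delta^{-1/2}$ is an isometry. The cancellation you need is between vectors concentrated on different random coordinates, and pulling the average outside is exactly what destroys it.

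The remedies you propose in Step 3 also fail:
\begin{itemize}
\item Abel summation needs multipliers of bounded variation, but the weights reach $n^{1-1/p}$.
\item The inequality $\|(\sum_k|d_k|^2)^{1/2}\|_p^p\le\sum_k\|d_k\|_p^p$ is true for $p\le2$. But $\sum_k\|d_k\|_p^p\lesssim\|g\|_p^p$ (martingale cotype $p$) holds only for $p\ge 2$ and fails for $p<2$; take $g=n^{-1/2}\sum_k x_k$. For $p\le2$ only the reverse, type-$p$, inequality is available.
\end{itemize}
A minor point: $D_j$ commutes with $P_t$. The factor $e^{-t}$ appears only if the semigroup on the right acts in the remaining coordinates.

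The paper gets around this as follows.
\begin{itemize}
\item It dualizes to $q=p'\ge 2$. There $\mathcal{B}_{n,1/p}h=\Delta^{-1/p}\sum_jD_jh_j$ is scalar-valued, so randomly selecting one coordinate costs only $\max_j\le\|\cdot\|_{\ell_2}$ (Lemma \ref{selector lemma}).
\item It keeps the clock weights random and builds the analytic family $\mathcal{A}_{n,z}$ with weights $(\tau_j\sigma_k)^{(z-1)/2}$, acting on the lifted functions $\mathcal{J}h_j$ on $\Omega_n\times\Omega_n$.
\item On $\Re z=1$ the weights are unimodular, and it proves a per-clock $L_\infty\to$ product-$\BMO$ bound.
\item On $\Re z=\frac12$ it proves an $L_2$ bound on the product space $\mathrm{Clock}_n\times\Omega_n\times\Omega_n$, computed exactly by orthogonality. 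The second clock is what makes this finite: one gets $\bigl(\int_0^\infty u^{-1/2}e^{-|A|u}\,du\bigr)^2=\pi|A|^{-1}$ instead of the divergent $\int_0^\infty u^{-1}e^{-|A|u}\,du$.
\item Stein interpolation, together with $[L_2,\BMO]_\theta\hookrightarrow L_q$ with constant $q^2$, then yields the constant $(p-1)^{-2}$.
\end{itemize}

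The missing idea is this: at the critical exponent, the unbounded weights can only be controlled by a clock-averaged $L_2$ estimate interpolated against an endpoint where the weights have modulus one, on the dual side. Deterministic weights with per-ordering square-function bounds cannot do it.
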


One of the principal novelties in our proof lies in the construction of a probabilistic representation of the dual fractional Riesz transform.  To explain the necessity of this construction, we first recall the limitation of the classical semigroup approach.  The standard heat semigroup representation
\[
\Delta^{-z}=\frac{1}{\Gamma(z)}\int_{0}^{\infty}t^{z-1}e^{-t\Delta}\,dt
\]
leads to the formula
\[
\mathcal B_z h=\frac{1}{\Gamma(z)}\int_{0}^{\infty}t^{z-1}e^{-t\Delta}\left(\sum_{j=1}^{n}D_jh_j\right)\,dt .
\]
A direct estimate based on the classical heat semigroup representation
proceeds by applying Minkowski's inequality to the singular integral.
At this stage, the argument no longer retains the cancellation among the
coordinate contributions appearing in each Walsh coefficient. Indeed, for
every nonempty $A\subseteq[n]$,
\[
\widehat{\mathcal B_{z}h}(A)
=
|A|^{-z}\sum_{j\in A}\widehat h_{j}(A),
\]
so the relevant cancellation is contained in the signed sum $\sum_{j\in A}\widehat h_{j}(A)$ before absolute values or $L_{p}$ norms are taken. At the critical exponent $z=1/p$, the resulting loss of
cancellation leads to the logarithmic divergence of the corresponding
integral.

We overcome this obstruction by introducing a probabilistic representation invoking two exponential clock systems. The construction is inspired by the martingale approach to singular integrals, where endpoint estimates are naturally formulated in terms of martingale BMO rather than $L_\infty$.  Moreover, the two independent exponential clock systems are assigned distinct roles in the representation. The coordinate-selection clock system determines the random ordering of the coordinates and hence the associated martingale-difference projections. The spectral-scaling clock system contributes the complementary spectral factor required in the dual fractional Riesz multiplier. Since
the present lifting procedure a two-parameter martingale structure, it is natural to seek a product BMO endpoint estimate.  More precisely, in Section \ref{sec:lift}, we first reduce Theorem \ref{thm:main} to studying the boundedness of $\mathcal{A}_{n,z}$ with $\Re(z)\in [1/2,1]$; see Proposition \ref{thm:averaging} for details. Later, the Section \ref{endpoint estimate section} is devoted to establishing the endpoint estimate for $\mathcal{A}_{n,z}$. After the localization technique applying to the family $\{\mathcal{A}_{n,z}\}_{\Re(z)\in [1/2,1]}$, we obtain the continuity and analyticity of the family which enables us to apply the Stein interpolation theory to establish Theorem \ref{thm:main}.

Furthermore, the first-order estimate admits a natural higher-order extension.  At the level of the probabilistic representation, one may formulate a direct higher-order analogue of the two-clock lifting: for an ordered multi-index $\mathbf j=(j_{1},\ldots,j_{k})\in[n]^{k}$ one introduces $k$ coordinate-selection exponential clock systems, corresponding to the $k$ discrete derivatives, together with one
additional exponential clock system that supplies the remaining
fractional spectral factor. Establishing $(k+1)$-parameter product-BMO endpoint for the analytic family of lifting operators and applying the multiparameter interpolation theory will yield the desired estimate.

For the purposes of the present paper, a more efficient proof is obtained
from a dimension-free Hilbert-valued amplification of the first-order
estimate. 
Since the operators $D_{j}$ commute with one another and with
every fractional power of $\Delta$, the Hilbert-valued first-order
estimate can be applied successively to the vector fields produced at
the preceding stages. For an integer $k\geq1$ and an ordered multi-index
$\mathbf j=(j_{1},\ldots,j_{k})\in[n]^{k}$, put
\[
 D_{\mathbf j}=D_{j_{1}}\cdots D_{j_{k}},
 \qquad
 \nabla^kf=\{D_{\mathbf j}f\}_{\mathbf j\in[n]^{k}}.
\]
Our second main result, the higher-order sharp fractional Riesz estimate, is stated as follows.

\begin{thm}\label{thm:higher-main}
Let $k\geq1$ be an integer and $1<p<2.$ For every $n\in\mathbb{N}$ and for every $f:\Omega_n\to\mathbb{C},$ we have
\[
 \left\|\left(\sum_{j_1,\cdots,j_k=1}^n
 |D_{j_{1}}\cdots D_{j_{k}}f|^{2}\right)^{\frac12}\right\|_{L_p(\Omega_n)}\leq c_k(p-1)^{-2k}\|\Delta^{\frac{k}{p}}f\|_{L_p(\Omega_n)}.
\]
The exponent $\frac{k}{p}$ is optimal even if the sum on the left is restricted to pairwise distinct indices.
\end{thm}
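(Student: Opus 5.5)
The plan is to derive Theorem \ref{thm:higher-main} from a Hilbert-valued version of Theorem \ref{thm:main}, applied $k$ times, and then to prove optimality with the test function $F_n=\1_{\{(1,\ldots,1)\}}$.

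\textbf{Step 1: Hilbert-valued first-order estimate.} For a finite-dimensional Hilbert space $H$ and $f:\Omega_n\to H$, I want
\[
\Big\|\Big(\sum_{j=1}^n\|D_jf\|_H^2\Big)^{1/2}\Big\|_{L_p(\Omega_n)}\leq c_{\rm abs}(p-1)^{-2}\|\Delta^{1/p}f\|_{L_p(\Omega_n;H)}.
\]
The cleanest route is Gaussian (Khintchine–Kahane) averaging. Write $f=\sum_i f_ie_i$ and take independent standard Gaussians $g_i$. Apply Theorem \ref{thm:main} to the scalar function $\sum_i g_if_i$ for each fixed realization, raise to the $p$-th power, and take expectation. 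For each fixed $x$, the expression $\sum_i g_iD_jf_i(x)$ is Gaussian with variance $\|D_jf(x)\|_H^2$.

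For the left side, the vector $(\sum_i g_iD_jf_i(x))_j$ is a Gaussian vector in $\ell_2^n$. Its $L_p$ moment of the $\ell_2$ norm is comparable, with absolute constants for $1<p\le2$ (Kahane), to its $L_2$ moment, which equals $(\sum_j\|D_jf(x)\|^2)^{1/2}$. The right side similarly gives $\mathbb E|\sum_i g_i\Delta^{1/p}f_i(x)|^p=\gamma_p^p\|\Delta^{1/p}f(x)\|_H^p$. The Gaussian constants are uniformly bounded for $p\in(1,2]$, so only the absolute constant changes.

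The subtle point is that the $\ell_2$-sum on the left should appear inside the $L_p$-norm without extra loss. The Kahane comparison handles this pointwise in $x$, since for fixed $x$ the left side is the norm of a single Gaussian vector in $\ell_2^n\otimes$ (span). I expect this to be the only technical point in the upper bound, and it is routine. Alternatively, if the proof of Theorem \ref{thm:main} (interpolation and the martingale BMO endpoint) is linear and extends verbatim to $H$-valued functions, one can cite that directly.

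\textbf{Step 2: iteration.} I proceed by induction on $k$. Set $H_{k-1}=\ell_2([n]^{k-1})$ and $F=\Delta^{(k-1)/p}\,\nabla^{k-1}f$, which is $H_{k-1}$-valued. Since the $D_j$ commute with each other and with powers of $\Delta$, we have $D_jF=\Delta^{(k-1)/p}D_j\nabla^{k-1}f$. Applying Step 1 to $\Delta^{-(k-1)/p}F=\nabla^{k-1}f$ (working on mean-zero parts, since every $D_j$ kills constants) gives
\[
\|\nabla^kf\|_{L_p(\ell_2^{n^k})}\le C(p-1)^{-2}\|\Delta^{1/p}\nabla^{k-1}f\|_{L_p(H_{k-1})}.
\]
Then apply the induction hypothesis, in its $H$-valued form obtained by the same averaging, to $g=\Delta^{1/p}f$. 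This yields the constant $C^k(p-1)^{-2k}$.

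\textbf{Step 3: optimality.} Let $F_n=\1_{\{(1,\ldots,1)\}}$, so $\|F_n\|_p=2^{-n/p}$. For distinct $j_1,\ldots,j_k$, $D_{\mathbf j}F_n$ has absolute value $2^{-k}$ on the $2^k$ points obtained by flipping a subset of $\{j_1,\ldots,j_k\}$; this uses $D_j=\frac12(I-\text{flip}_j)$ and is up to normalization. At a point $x$ with $m\le n$ coordinates equal to $-1$, the nonzero terms come from ordered tuples containing those $m$ indices. Restricting to $x=(1,\ldots,1)$, there are about $n^k$ tuples, so the square function there is $\gtrsim_k n^{k/2}$. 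That only gives $n^{k/2}$, which is too weak. Instead I take $x$ with exactly $k$ minus signs, at which the square function is $\asymp_k 1$. There are $\binom nk\asymp n^k$ such points, so the left side is $\gtrsim 2^{-n/p}n^{k/p}$. On the other side, $\|\Delta^{k/p}F_n\|_p\lesssim n^{k/p}\|F_n\|_p$: write $\Delta^{k/p}=\Delta^{m}\Delta^{k/p-m}$ with $m=\lceil k/p\rceil$, use $\|\Delta\|_{p\to p}\le 2n$, and bound the fractional part via the known $L_p$-boundedness of $(\Delta/n)^{\theta}$ on such functions, as in the argument of \cite[Lemma 5.5]{E-LP2008}. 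This gives $\mathfrak{R}$-type lower bounds showing that any exponent $\beta<k/p$ fails. I expect this last upper estimate on $\Delta^{k/p}F_n$ to be the most delicate step, and I would model it on the Naor–Schechtman argument quoted for $k=1$.
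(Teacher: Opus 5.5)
Steps 1--2 of your proposal are essentially the paper's argument. The paper also upgrades Theorem~\ref{thm:main} to Hilbert-space-valued functions by random averaging, using Rademacher signs and the Kahane--Khintchine inequality where you use Gaussians, and then iterates. Your lower bound for the distinct-index square function of $F_n$, taken at the $\binom nk$ points with exactly $k$ minus signs, is the paper's computation. One small correction in Step 1: for complex coefficients, $\mathbb E|\sum_i g_ia_i|^p$ is not exactly $\gamma_p^p\|a\|^p$. You only need an upper bound there, and Jensen gives $\mathbb E|\sum_i g_ia_i|^p\le(\mathbb E|\sum_i g_ia_i|^2)^{p/2}=\|a\|^p$ for $p\le 2$.

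The gap is in the remaining estimate $\|\Delta^{\beta}F_n\|_{L_p}\lesssim n^{\beta}\|F_n\|_{L_p}$.

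\textbf{It is needed for every $\beta<k/p$.} The contradiction is with an assumed inequality $\|\nabla^k f\|\le C\|\Delta^{\beta}f\|$, so the bound at $\beta=k/p$ alone is not enough.

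\textbf{The rounding goes the wrong way.} With $m=\lceil k/p\rceil$, the leftover exponent is $k/p-m\in(-1,0]$, which is a \emph{negative} power of $\Delta/n$. Negative powers are not bounded uniformly in $n$: for $\theta>0$, $(\Delta/n)^{-\theta}w_{\{1\}}=n^{\theta}w_{\{1\}}$. So your step would need an unproved property specific to $F_n$, and you defer it to a citation.

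\textbf{The fix is to round down, as the paper does.} Write $\beta=m+\gamma$ with $m=\lfloor\beta\rfloor$ and $\gamma\in[0,1)$. Use $\|\Delta\|_{L_p\circlearrowleft}\le n$, which holds because each $D_j=\frac12(I-\text{flip}_j)$ is a contraction. For the fractional part, use subordination:
\[
\Delta^{\gamma}=c_\gamma\int_0^\infty(1-e^{-t\Delta})\,\frac{dt}{t^{1+\gamma}},\qquad \|1-e^{-t\Delta}\|_{L_p\circlearrowleft}\le\min\{2,e^{tn}-1\}.
\]
The second bound combines contractivity of $e^{-t\Delta}$ with the power series and $\|\Delta\|_{L_p\circlearrowleft}\le n$. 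Together with $\min\{2,e^{s}-1\}\le 3(1-e^{-s})$, this gives $\|\Delta^{\gamma}\|_{L_p\circlearrowleft}\le 3n^{\gamma}$. Hence $\|\Delta^{\beta}\|_{L_p\circlearrowleft}\le 3n^{\beta}$ for every $\beta>0$ and every function, with nothing specific to $F_n$ required. Combined with your lower bound $\gtrsim_k n^{k/p}\|F_n\|_{L_p}$, this forces $\beta\ge k/p$.
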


\begin{rem}
We prove in Section \ref{optimality on constants} that 
\[
\left\|\nabla^k\Delta^{-\frac{k}{p}}\right\|_{L_p(\Omega_n)\to L_p(\Omega_n,\ell_2^{n^{k}})}\geq c_k(p-1)^{-\frac{k}{2}}
\]
The gap between the upper bound $c_k(p-1)^{-2k}$ and the lower bound $c_k(p-1)^{-\frac{k}{2}}$ is left open.
\end{rem}

%

Our paper is organized as follows. Section \ref{sec:preliminaries} contains preliminary material. Section \ref{sec:lift} provides the transference principle laying at the core of our approach. In Section \ref{sec:lift}, we introduce an (analytic) family $\mathcal{A}_{n,z}$ of operators whose boundedness (in suitable space) constitutes the bulk of the proof. The $L_2$-boundedness on the line $\{\Re(z)=\frac12\}$ of $\mathcal{A}_{n,z}$  is established in Subsection \ref{sec:left}.  The $L_{\infty}-\BMO$ estimate on the line $\{\Re(z)=1\}$  of $\mathcal{A}_{n,z}$  is established in the Subsection \ref{sec:right}.   In Section \ref{sec:regularization}, we establish the desired $L_{q}$-boundedness for the operator $\mathcal{A}_{n,1-\frac1q}$ via the Stein interpolation theorem. In Section \ref{main result section}, we prove Theorems \ref{thm:main} and \ref{thm:higher-main}.  The  sharpness of the exponent, and the optimality of the constants in Theorem \ref{thm:higher-main} are also discussed in Section \ref{optimality on exponent} and  Section \ref{optimality on constants}, respectively. Finally, Section \ref{sec:applications} presents applications of our sharp fractional Riesz estimate, including the optimal short-time gradient estimate of Eskenazis and Ivanisvili \cite{E-I2020}, the  logarithm-free Bernstein--Markov inequality of Volberg \cite{V2024}, 

\begin{rem}
Upon completing our work, we became aware of a very recent paper
by Xu and Zhang \cite{XuZhang}, in which the sharp Riesz estimate is also established. Their
approach, however, relies on the BMO theory for noncommutative semigroups and is thus fundamentally different from the one developed here. Furthermore, our two-clock
method can be adapted to prove a stronger form of the convolution inequality
raised in \cite[Question 6.1]{NS2016}, as well as to yield a new proof of the sharp metric $X_p$
inequality posed by Naor in \cite{Naor2016}--a result recently established by Areshidze \cite{Are2026}.
The detailed proofs of these additional results will be presented in our forthcoming
paper.
\end{rem}

%
\section{Preliminaries}\label{sec:preliminaries}

The notations used throughout this paper are standard. Let $\mathbb{R}$ and $\mathbb{C}$ be the fields of real numbers and complex numbers, respectively. For a natural number $n,$ let $\mathfrak{S}_n$ be the group of all permutations on $\{1,\cdots,n\}$ and $[n]=\{1,\cdots,n\}$. For a linear operator $T:X\to X$, we denote the operator norm by $\|T\|_{X\circlearrowleft}$. We use $c_{\rm abs}$ to denote an absolute constant, whose value may vary from line to line. For a subset $A\subseteq [n]$, we let $|A|$ be the cardinality of $A$.

\subsection{Analysis on the hypercube}
For $n\in\mathbb{N},$ write $\Omega_{n}=\{-1,1\}^{n}.$ Let $\mu$ be a normalised uniform measure on $\{-1,1\}.$  We equip $\Omega_n$ with the measure $\mu_n:=\mu^{\otimes n}.$ Note that
$$\int_{\Omega_n}fd\mu_n=2^{-n}\sum_{x\in\Omega_n}f(x).$$
For $1\leq p\leq\infty$, we use the usual space $L_p(\Omega_n)$. For $x\in\Omega_{n}$ and $j\in\mathbb{N},$ let $x^{(j)}$ be obtained from $x$ by replacing $x_j$ with $-x_j.$ The $j$-th Walsh derivative is given by the formula
$$(D_jf)(x)=\frac{f(x)-f(x^{(j)})}{2}.$$
The operators $\{D_j\}_{j\geq1}$ are commuting self-adjoint projections on
$L_2(\Omega_{n})$ and contractions on every $L_p(\Omega_{n}).$ For $A\subset [n]$, the Walsh function $w_A$ is defined by the formula
$$w_A(x)=\prod_{j\in A}x_j.$$
The Walsh system $\{w_A\}_{A\subset [n]}$ is an orthonormal basis of $L_2(\Omega_n)$ (respectively, of $L_2(\Omega_{n})$). We have
$$\widehat{f}(A)=\langle f,w_A\rangle,\quad f=\sum_{A\subset [n]}\widehat{f}(A)w_A.$$
The Walsh derivatives is defined by $D_jw_{A}=\1_A(j)w_A$. For a subset $A\subseteq [n],$ let $|A|$ be the cardinality of $A.$ The Walsh gradient and the Walsh Laplacian are given by the formulae
$$\nabla f=\{D_jf\}_{j=1}^{n},\quad \Delta=\sum_{j=1}^nD_j.$$
Thus $\Delta w_A=|A|w_A.$ Consequently, $\ker(\nabla)=\ker(\Delta)=\mathbb{C}$. The heat semigroup $e^{-t\Delta}$ acts diagonally by
$$e^{-t\Delta}f=\sum_{A\subset[n]}e^{-t|A|}\widehat{f}(A)w_A,\quad t\geq0.$$
For $z\in\mathbb{C}$, complex power $\Delta^z$ of the Walsh Laplacian on $L_p(\Omega_n)$ is defined by the formula
$$\Delta^zf=\sum_{\emptyset\neq A\subset[n]}|A|^z\widehat{f}(A)w_A.$$

\subsection{Haar system}

We begin this subsection by recalling atoms and Haar function with respect to a dyadic filtration. 
Let $\mathbb{D}_0=\{\Omega_n\}$, and for $1\leq k\leq n$, let 
\[\mathbb{D}_k=\{\{a\}\times \Omega_{n-k}: a\in \Omega_k\},\quad \mathcal{F}_k=\sigma(\mathbb{D}_k).\]
Given $I:=\{a_I\}\times \Omega_{n-k}\in \mathbb{D}_k$ with $0\leq k\leq n-1$, we set 
\[I^+= \{a_I\}\times \{1\}\times\Omega_{n-k-1}\in \mathbb{D}_{k+1},\quad I^-= \{a_I\}\times \{-1\}\times \Omega_{n-k-1}\in \mathbb{D}_{k+1},\]
for the the left-half of $I$, and the right-half of $I$, respectively. Then  the corresponding Haar function is defined as follows
\[
h_I=2^{\frac{k}{2}}\left(\1_{I^+}-\1_{I^{-}}\right).
\]
We set $h_{\emptyset}=1$ for convenience. Set $\mathcal{D}_n=\{\emptyset\}\bigsqcup\bigsqcup_{k=0}^{n-1}\mathbb{D}_k$, and it is clear that for each $I\in \mathcal{D}_n\setminus \{\emptyset\}$,
\[\int_{\Omega_n}h_I d\mu_n =0, \quad \|h_I\|_{L_2(\Omega_n)}=1.\]

\noindent The family of dyadic rectangles in $\Omega_n\times \Omega_n$ is given by
\[
\mathcal{R}_n=\mathcal{D}_n\otimes \mathcal{D}_n.
\]
For each rectangle $R=I\times J\in \mathcal{R}_{n}$, the Haar function $h_R$ is defined by 
\[
h_R=h_I\otimes h_J.
\]

\begin{lem}\label{orthonormal basis lemma}
The family of all Haar functions $\{h_I\}_{I\in \mathcal{D}_n}$ (respectively, $\{h_R\}_{R\in \mathcal{R}_n}$) forms an orthonormal basis of $L_2(\Omega_n)$ (respectively, $L_2(\Omega_n\times \Omega_n)$). 
\end{lem}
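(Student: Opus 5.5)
We prove orthonormality directly and then count dimensions; the two-dimensional statement then follows from a general tensor-product fact.

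\emph{Normalization.} For $I=\{a_I\}\times\Omega_{n-k}\in\mathbb{D}_k$ we have $\mu_n(I^\pm)=2^{-k-1}$. Since $I^+$ and $I^-$ are disjoint,
\[
\|h_I\|_2^2=2^{k}\bigl(\mu_n(I^+)+\mu_n(I^-)\bigr)=2^{k}\cdot 2^{-k}=1 .
\]
Also $\int h_I\,d\mu_n=2^{k/2}\bigl(\mu_n(I^+)-\mu_n(I^-)\bigr)=0$. This gives orthogonality of every $h_I$ to $h_\emptyset=1$.

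\emph{Orthogonality.} Take distinct $I\in\mathbb{D}_k$ and $J\in\mathbb{D}_l$, both different from $\emptyset$.
\begin{itemize}
\item If $k=l$, the sets $I$ and $J$ are disjoint, so the supports of $h_I$ and $h_J$ are disjoint and $\langle h_I,h_J\rangle=0$.
\item If $k<l$, the nested structure of the dyadic filtration leaves two possibilities. Either $J\cap I=\emptyset$, and the inner product vanishes as before. Or $J\subseteq I^+$ or $J\subseteq I^-$, since $J\in\mathbb{D}_l$ with $l\ge k+1$ refines $\mathbb{D}_{k+1}$. In that case $h_I$ is constant on $J$, so $\langle h_I,h_J\rangle=c\int h_J\,d\mu_n=0$.
\end{itemize}

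\emph{Completeness.} The family contains $1+\sum_{k=0}^{n-1}2^k=2^n$ orthonormal vectors, hence linearly independent ones. Since $\dim L_2(\Omega_n)=|\Omega_n|=2^n$, the family is an orthonormal basis.

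\emph{The product family.} For the rectangles, use the identification $L_2(\Omega_n\times\Omega_n)=L_2(\Omega_n)\otimes L_2(\Omega_n)$ with the product measure. The inner product factorizes:
\[
\langle h_I\otimes h_J,\,h_{I'}\otimes h_{J'}\rangle=\langle h_I,h_{I'}\rangle\,\langle h_J,h_{J'}\rangle=\delta_{II'}\,\delta_{JJ'} .
\]
So $\{h_R\}_{R\in\mathcal{R}_n}$ is orthonormal. It has $2^n\cdot 2^n=2^{2n}=\dim L_2(\Omega_n\times\Omega_n)$ elements, so it is an orthonormal basis.

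The only point requiring care is the nesting argument in the case $k<l$. It is routine: it uses only that $\mathbb{D}_{l}$ refines $\mathbb{D}_{k+1}$, and $\mathbb{D}_{k+1}=\{\text{halves of atoms in }\mathbb{D}_k\}$.
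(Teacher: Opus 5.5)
Your argument is correct: normalization and mean zero, the nesting dichotomy for $k<l$ (with $k>l$ by symmetry), the dimension count $1+\sum_{k=0}^{n-1}2^k=2^n$, and the factorization $\langle h_I\otimes h_J,h_{I'}\otimes h_{J'}\rangle=\langle h_I,h_{I'}\rangle\langle h_J,h_{J'}\rangle$ with the count $2^{2n}$ together give exactly the lemma. The paper states this lemma without proof as a standard fact, and your route is the standard argument it implicitly relies on.
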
 

We conclude this section by introducing the dyadic product BMO space on $\Omega_{n}\times \Omega_{n}$. The following BMO space was introduced by Bernard \cite{Bernard}. It is the dyadic two-parameter martingale BMO space (see e.g. \cite{We1994}), also the dyadic case of the product BMO in the sense of Chang and Fefferman \cite{CF1980}.
For each $U\subset \Omega_n\times \Omega_n$, let $P_{U}$ be the orthogonal projection on the subspace spanned by the Haar functions $h_R$, $R\in \mathcal{R}_{n}$ and $R\subset U$. Define
\begin{align*}
	\|f\|_{\mathrm{BMO}}&=\sup_{U\subset  \Omega_n\times \Omega_n}\frac{1}{|U|^{1/2}}\|P_{U}f\|_{L_2}\\
	&=\sup_{U\subset  \Omega_n\times \Omega_n}\frac{1}{|U|^{1/2}} \Big(\sum_{R\in \mathcal{R}_{n}, R\subset U} |\langle f, h_R\rangle|^2\Big)^{1/2}.
\end{align*}

\subsection{Complex interpolation}
 In  subsection, we collect several complex interpolation results which will be used later.
The following theorem is taken from \cite[Theorem 5.1.2]{BL1976}.

\begin{thm}\label{thm:bochner}
Let $(S,m)$ be a probability space and let $E_{0}$ and $E_{1}$ be a compatible Banach couple. If $q\geq 2$ and $\theta=1-\frac{2}{q},$ then
\begin{equation*}
 [L_{2}(S;E_{0}),L_{\infty}(S;E_{1})]_{\theta}
 =L_q(S;[E_{0},E_{1}]_{\theta})
\end{equation*}
isometrically.
\end{thm}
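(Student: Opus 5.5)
The plan is to prove the two continuous inclusions separately: first for $E_1$-valued and $E_0$-valued simple functions, which are dense in all the spaces involved, and then to extend by density and completeness. Throughout, $\|\cdot\|_\theta$ denotes the norm of $E_\theta:=[E_0,E_1]_\theta$. The choice $\theta=1-\frac2q$ is used through the two identities $\frac{q(1-\theta)}{2}=1$ and $q(1-\theta)=2$.

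\emph{Step 1: $L_q(S;E_\theta)\subset[L_2(S;E_0),L_\infty(S;E_1)]_\theta$ with norm $\le1$.} Take a simple function $f=\sum_k \1_{A_k}x_k$ with disjoint $A_k$, $x_k\in E_0\cap E_1$, $x_k\neq0$, and $\|f\|_{L_q(S;E_\theta)}=1$. Fix $\varepsilon>0$. For each $k$, choose $g_k$ in the Calderón space $\mathcal F(E_0,E_1)$ with $g_k(\theta)=x_k/\|x_k\|_\theta$ and boundary norms at most $1+\varepsilon$. Define
\[
F(z)(s)=\sum_k \1_{A_k}(s)\,\|x_k\|_\theta^{\frac{q(1-z)}{2}}\,g_k(z).
\]
Since $\frac{q(1-\theta)}{2}=1$, we get $F(\theta)=f$. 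On the line $\Re z=1$ the scalar factor has modulus $1$, so $\|F(1+it)\|_{L_\infty(E_1)}\le1+\varepsilon$. On the line $\Re z=0$ we have $\|F(it)(s)\|_{E_0}\le(1+\varepsilon)\|f(s)\|_\theta^{q/2}$, hence $\|F(it)\|_{L_2(E_0)}\le(1+\varepsilon)\|f\|_{L_q(E_\theta)}^{q/2}=1+\varepsilon$. Continuity and analyticity of $F$ are inherited from the finitely many $g_k$. Letting $\varepsilon\to0$ gives the bound.

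\emph{Step 2: the reverse inclusion with norm $\le1$.} Let $F$ be in the Calderón space of the couple $(L_2(S;E_0),L_\infty(S;E_1))$, taken from a dense class of finite sums $\sum_k\varphi_k(z)\1_{A_k}y_k$. Put $M_0=\sup_t\|F(it)\|_{L_2(E_0)}$ and $M_1=\sup_t\|F(1+it)\|_{L_\infty(E_1)}$. For a.e.\ fixed $s$, the map $z\mapsto F(z)(s)$ lies in $\mathcal F(E_0,E_1)$. The Poisson-kernel (harmonic measure) refinement of the three-lines bound gives
\[
\log\|F(\theta)(s)\|_\theta\le\int\log\|F(it)(s)\|_{E_0}\,P_0(\theta,t)\,dt+\int\log\|F(1+it)(s)\|_{E_1}\,P_1(\theta,t)\,dt,
\]
where $\int P_0\,dt=1-\theta$ and $\int P_1\,dt=\theta$. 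Bound the second integral by $\theta\log M_1$. Multiply the inequality by $q$ and use $q(1-\theta)=2$, so that the first term becomes $2\int\log\|F(it)(s)\|_{E_0}\,d\mu_0(t)$ with $\mu_0$ a probability measure. Jensen's inequality then yields
\[
\|F(\theta)(s)\|_\theta^q\le M_1^{q\theta}\int\|F(it)(s)\|_{E_0}^2\,d\mu_0(t).
\]
Integrating in $s$ and applying Fubini gives $\|F(\theta)\|_{L_q(E_\theta)}^q\le M_1^{q\theta}M_0^2$. The standard rescaling $F\mapsto e^{\lambda(z-\theta)}F$ then turns this into $\|F(\theta)\|\le M_0^{1-\theta}M_1^{\theta}$.

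\emph{Step 3: density and the main obstacle.} Combining Steps 1 and 2 on dense subspaces and extending by completeness gives the isometric identity. I expect the main obstacle to be technical rather than conceptual. In Step 2, $s\mapsto F(z)(s)$ must be a genuinely measurable $\mathcal F(E_0,E_1)$-valued family, and one must justify that the Calderón norm at $\theta$ can be computed on the dense class of finite-sum analytic functions. I would handle this exactly as in \cite[Theorem 5.1.2]{BL1976}: restrict to functions of the form $\sum_k\varphi_k(z)\1_{A_k}y_k$, which are dense in the Calderón space. This reduces all measurability questions to finitely many scalar functions.
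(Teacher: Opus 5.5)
Your Steps 1 and 2 are the standard Calder\'on argument behind \cite[Theorem 5.1.2]{BL1976}. The paper cites that result without proof, and your computations are correct. In Step 2 the rescaling is not even needed, since $(M_1^{q\theta}M_0^{2})^{1/q}=M_0^{1-\theta}M_1^{\theta}$ exactly.

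The step that fails is the justification in Step 3. Analytic functions of the form $\sum_{k=1}^N\varphi_k(z)\1_{A_k}y_k$ are \emph{not} dense in the Calder\'on space of $(L_2(S;E_0),L_\infty(S;E_1))$. The reason is that finitely-valued functions are not dense in $L_\infty(S;E_1)$ when $E_1$ is infinite-dimensional, even when restricted to $L_2(S;E_0)\cap L_\infty(S;E_1)$. For example, take $S=[0,1]$, $E_0=E_1=\ell_2$, $a=\sum_{n\geq1}\1_{[2^{-n},2^{1-n})}e_n$ and $F(z)=e^{(z-\theta)^2}a$. For any $G$ of your form, $G(1)$ takes values in a finite-dimensional subspace $V$. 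Since $\mathrm{dist}(e_n,V)\to1$, this gives $\|F(1)-G(1)\|_{L_\infty(S;\ell_2)}\geq e^{(1-\theta)^2}$. So Step 2, as written, bounds $\|F(\theta)\|_{L_q(S;E_\theta)}$ only on a non-dense class. Nothing then guarantees that the infimum defining $\|f\|_{[L_2(S;E_0),L_\infty(S;E_1)]_\theta}$ can be approached within that class. This is the one place where the $L_\infty$ endpoint genuinely matters.

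The fix is to discretize only in $z$, not in $s$:
\begin{itemize}
\item Use the dense subclass of the Calder\'on space from \cite[Section 4.2]{BL1976}: $G(z)=e^{\delta z^2}\sum_{k=1}^N e^{\lambda_k z}a_k$ with $\delta>0$, $\lambda_k\in\mathbb{R}$ and $a_k\in L_2(S;E_0)\cap L_\infty(S;E_1)$.
\item For a.e.\ $s$, all $a_k(s)$ lie in $E_0\cap E_1$. Hence $z\mapsto G(z)(s)$ belongs to $\mathcal{F}(E_0,E_1)$, and your Poisson--Jensen computation applies verbatim.
\item The bound $\|G(1+it)(s)\|_{E_1}\leq M_1$ holds for all $t$ outside a single null set of $s$: take rational $t$, then use continuity in $t$.
\item The map $s\mapsto G(\theta)(s)$ is strongly measurable into $E_0\cap E_1\hookrightarrow E_\theta$, by Pettis applied through the diagonal embedding of $E_0\cap E_1$ into $E_0\oplus E_1$.
\end{itemize}
This gives $\|G(\theta)\|_{L_q(S;E_\theta)}\leq\|G\|_{\mathcal{F}}$ on a dense linear subspace of the Calder\'on space. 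A Cauchy-sequence argument then yields the contractive inclusion into $L_q(S;E_\theta)$. Limits are identified in $L_1(S;E_0+E_1)$, into which both $L_q(S;E_\theta)$ and $L_2(S;E_0)+L_\infty(S;E_1)$ embed continuously because $m$ is a probability measure. The same identification of limits is needed, though it is routine, when you extend Step 1 from simple functions to all of $L_q(S;E_\theta)$.
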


We use the following basic property of complex interpolation.

\begin{thm}\label{standard complex interpolation theorem}
Let $X_1$ be a Banach space such that $X_{1}$ embeds into $X_{0}$ continuously. Consider the family of bounded linear operators $F(z):X_0\to Y_0,$ $0\leq\Re(z)\leq 1.$ Suppose that $z\to F(z)h$ is $Y_0$-analytic on the strip $\{0<\Re(z)<1\}$ for every $h\in X_0.$ Suppose that the map $z\to F(z)h$ is $Y_0\cap Y_1$-continuous on the closed strip $\{0\leq\Re(z)\leq 1\}$ for every $h\in X_0.$ It follows that
$$
\|F(\theta)\|_{[X_0,X_1]_{\theta}\to [Y_0,Y_1]_{\theta}}\leq\max\left\{\sup_{\Re(z)=0}\|F(z)\|_{X_0\to Y_0},\sup_{\Re(z)=1}\|F(z)\|_{X_1\to Y_1}\right\}.
$$
\end{thm}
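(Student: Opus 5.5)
The plan is to argue directly from the definition of the complex method, as in the three lines lemma. Put
\[
M=\max\Bigl\{\sup_{\Re(z)=0}\|F(z)\|_{X_0\to Y_0},\ \sup_{\Re(z)=1}\|F(z)\|_{X_1\to Y_1}\Bigr\}.
\]
Fix $h\in[X_0,X_1]_\theta$ with $\|h\|_{[X_0,X_1]_\theta}<1$ and fix $\varepsilon>0$. By definition of the interpolation norm there is $g\in\mathcal F(X_0,X_1)$ with $g(\theta)=h$ and
\[
\max\Bigl\{\sup_{t}\|g(it)\|_{X_0},\ \sup_t\|g(1+it)\|_{X_1}\Bigr\}<1+\varepsilon .
\]
Here $g$ is bounded and continuous on the closed strip $\overline S$ with values in $X_0+X_1=X_0$ (since $X_1\hookrightarrow X_0$), and analytic inside. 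By the standard density lemma we may take $g$ of the form
\[
g(z)=e^{\delta(z^2-\theta^2)}\sum_{k} e^{\lambda_k z}x_k,\qquad x_k\in X_0\cap X_1=X_1 ,
\]
a finite sum. We then let $\delta\downarrow0$ at the end, which costs only a factor tending to $1$. The candidate extremal function on the target side is $G(z)=F(z)g(z)$, which satisfies $G(\theta)=F(\theta)h$.

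\emph{Regularity of $G$.} For each fixed $x\in X_0$ the map $z\mapsto F(z)x$ is continuous on $\overline S$ into $Y_0\cap Y_1$. By Banach--Steinhaus, $\sup_{z\in K}\|F(z)\|_{X_0\to Y_0}<\infty$ on every compact $K\subset\overline S$. Continuity of $G$ into $Y_0+Y_1$ then follows from the splitting
\[
G(z)-G(z_0)=F(z)\bigl(g(z)-g(z_0)\bigr)+\bigl(F(z)-F(z_0)\bigr)g(z_0).
\]
The first term tends to $0$ by the local uniform bound, the second by strong continuity. For analyticity, $z\mapsto F(z)$ is strongly analytic and locally bounded, hence analytic in operator norm on compacta (e.g.\ via Cauchy integrals). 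A product of an operator-analytic function and an $X_0$-analytic function is $Y_0$-analytic; with the special form of $g$ this is even clearer, since $G(z)=e^{\delta(z^2-\theta^2)}\sum_k e^{\lambda_k z}F(z)x_k$.

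\emph{Boundary estimates.} On $\Re(z)=0$ we have
\[
\|G(z)\|_{Y_0}\le \|F(z)\|_{X_0\to Y_0}\,\|g(z)\|_{X_0}\le M(1+\varepsilon).
\]
On $\Re(z)=1$, $g(z)\in X_1$ and $\|G(z)\|_{Y_1}\le \|F(z)\|_{X_1\to Y_1}\,\|g(z)\|_{X_1}\le M(1+\varepsilon)$. If $G\in\mathcal F(Y_0,Y_1)$, the definition of $[Y_0,Y_1]_\theta$ gives
\[
\|F(\theta)h\|_{[Y_0,Y_1]_\theta}\le M(1+\varepsilon)e^{\delta\,\Re(\ldots)}.
\]
Letting $\varepsilon,\delta\to0$ and using homogeneity yields the claimed norm bound.

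\emph{Main obstacle.} The key difficulty is showing that $G$ is \emph{bounded} on the whole strip, as membership in $\mathcal F(Y_0,Y_1)$ requires; Banach--Steinhaus only gives bounds on compacta. I would first try to extract a global bound $\sup_{z\in\overline S}\|F(z)\|_{X_0\to Y_0}<\infty$ from the hypotheses, which is implicit in how such families arise; for the families $\mathcal A_{n,z}$ used later this is checked directly. Failing that, I would use the Gaussian factor $e^{\delta z^2}$ in $g$, which decays like $e^{-\delta t^2}$ along vertical lines and absorbs any growth of $\|F(z)\|$ of order $e^{o(t^2)}$. Then $G$ is bounded, and the boundary estimates above are unaffected as $\delta\to0$.
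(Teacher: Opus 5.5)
The paper gives no proof of this statement; it is quoted as a basic property of the complex method. Your scheme (take a near-extremal $g\in\mathcal F(X_0,X_1)$, set $G=F(\cdot)g(\cdot)$, read off the boundary bounds) is the standard proof. The genuine gap is the step you single out yourself: $G\in\mathcal F(Y_0,Y_1)$ requires $G$ to be bounded on the whole strip, and neither of your remedies can get this from the stated hypotheses. Strong analyticity and strong continuity on the closed strip give, via Banach--Steinhaus, only bounds on compacta. They imply neither $\sup_{0\le\Re z\le1}\|F(z)\|_{X_0\to Y_0}<\infty$ nor growth of order $e^{o(|\Im z|^2)}$.

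In fact the statement as written is false. Take $X_0=X_1=Y_0=Y_1=\mathbb C$ and let $F(z)$ be multiplication by $\phi(z)=\exp\bigl(\exp(-i\pi(z-\tfrac12))\bigr)$. Then $|\phi(x+iy)|=\exp\bigl(e^{\pi y}\cos(\pi(x-\tfrac12))\bigr)$, so $|\phi|=1$ on both lines $\Re z=0$ and $\Re z=1$. Since $\phi$ is entire, every hypothesis holds, yet $\|F(\tfrac12)\|=\phi(\tfrac12)=e>1$. Along $\Re z=\tfrac12$ the growth is $\exp(e^{\pi y})$, which no Gaussian factor $e^{\delta z^2}$ can absorb.

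So a growth condition has to be added to the statement, not derived from it. The simplest choice is $\sup_{0\le\Re z\le1}\|F(z)\|_{X_0\to Y_0}<\infty$. Under that assumption your argument closes without any Gaussian factor: $g$ is bounded in $X_0+X_1=X_0$, so $G$ is bounded in $Y_0$, and your regularity and boundary estimates go through. One small repair is needed: the density lemma gives approximants with $g_m(\theta)\to h$, not $g_m(\theta)=h$. You therefore get the bound on the dense subspace $X_1=X_0\cap X_1$ and then extend. To identify the extension with $F(\theta)$, use $[X_0,X_1]_\theta\hookrightarrow X_0$ and the boundedness of $F(\theta)$ from $X_0$ to $Y_0$.

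If only Stein-type admissible growth $\log\|F(z)\|\le Ce^{a|\Im z|}$ with $a<\pi$ were assumed, Gaussian damping would not suffice, and you would need a Hirschman-type three lines lemma. In the paper's application the uniform bound does hold. On $E_\epsilon$ one has $|\tau_j^{(w-1)/2}\sigma_k^{(w-1)/2}|=(\tau_j\sigma_k)^{(\Re w-1)/2}$, which depends only on $\Re w$ and is bounded by a constant depending on $\epsilon$. By orthogonality of the ranges of the $P_j^\tau\otimes P_k^\sigma$, the operators $M_{\1_{E_\epsilon}}\mathbf U\mathcal A_{n,w}$ are therefore bounded $L_2\to L_2$ uniformly on the strip.
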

\noindent 
Let \((X_0,X_1)\) be a compatible Banach couple, and let \(0<\theta<1\). We define the norm on \(X_0^{1-\theta}X_1^\theta\) as follows:
\begin{equation*}
\|a\|_{X_0^{1-\theta}X_1^{\theta}}=\inf\left\{\|b\|_{X_0}^{1-\theta}\|c\|_{X_1}^{\theta}:\ |a|\leq |b|^{1-\theta}|c|^{\theta}\right\}.
\end{equation*}
The following result is standard (see e.g. Theorem 4.6 in \cite{Kalton-MS}). The Radon-Nikodym property is also referred to \cite{Kalton-MS}. 

\begin{thm}\label{lem:lattice}
Let $X_0$ and $X_1$ be Banach lattices. Suppose that either $X_0$ or $X_1$ possesses Radon-Nikodym property. It follows that
\begin{equation*}
 [X_0,X_1]_{\theta}=X_0^{1-\theta}X_1^{\theta}
\end{equation*}
isometrically.
\end{thm}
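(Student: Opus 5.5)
The plan is to prove the two inclusions $X_0^{1-\theta}X_1^{\theta}\subseteq[X_0,X_1]_\theta$ and $[X_0,X_1]_\theta\subseteq X_0^{1-\theta}X_1^{\theta}$ separately, each with norm at most one. The first inclusion is a direct construction of an analytic family, in the spirit of Calderón. The second goes through duality, and this is where the Radon--Nikodym hypothesis enters. Throughout, I realise $X_0,X_1$ as Köthe function lattices over a common measure space, so that $|b|^{1-z}|c|^{z}$ makes sense pointwise. For abstract lattices one would instead use Krivine's functional calculus. In the application of this paper all lattices are $L_p$-type spaces over finite or $\sigma$-finite measure spaces, so the function-lattice picture suffices.

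\emph{Step 1: $\|a\|_{[X_0,X_1]_\theta}\le\|a\|_{X_0^{1-\theta}X_1^\theta}$.} Fix $a$ and a factorisation $|a|\le|b|^{1-\theta}|c|^{\theta}$. By homogeneity, replacing $b,c$ by $\lambda^{\theta}b$ and $\lambda^{\theta-1}c$ with a suitable $\lambda>0$, I may assume $\|b\|_{X_0}=\|c\|_{X_1}=\|b\|_{X_0}^{1-\theta}\|c\|_{X_1}^{\theta}$. Write $a=u\,|b|^{1-\theta}|c|^{\theta}$ with $|u|\le 1$, where $u:=0$ off the support of $|b|^{1-\theta}|c|^{\theta}$. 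Define
\[
F(z)=u\,|b|^{1-z}|c|^{z}.
\]
Then $F(\theta)=a$. On the boundary lines we have $|F(it)|\le|b|$ and $|F(1+it)|\le|c|$, so by the lattice property $\|F(it)\|_{X_0}\le\|b\|_{X_0}$ and $\|F(1+it)\|_{X_1}\le\|c\|_{X_1}$.

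The technical point is that $F$ must be continuous and bounded in $X_0+X_1$ and analytic inside the strip. To arrange this, truncate to the set $\{\epsilon\le|b|,|c|\le\epsilon^{-1}\}$ intersected with a set of finite measure. There $F$ is a finite-dimensional-type analytic family, and the lattice norm bounds are preserved by the truncation. Then let $\epsilon\to0$. Since the truncated $a_\epsilon$ increase to $|a|$ in modulus, and the estimate $\|a_\epsilon\|_{[X_0,X_1]_\theta}\le\|b\|^{1-\theta}\|c\|^\theta$ is uniform, the limit passes to $a$: on the $X_0^{1-\theta}X_1^\theta$ side by dominated convergence of the factorisation, and on the interpolation side via the Fatou-type closure of the unit ball. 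Taking the infimum over factorisations gives Step 1.

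\emph{Step 2: the reverse inequality.} Let $F\in\mathcal F(X_0,X_1)$ with $F(\theta)=a$. I will test $a$ against the Köthe duals and use Lozanovskii's duality identity
\[
\bigl(X_0^{1-\theta}X_1^{\theta}\bigr)'=(X_0')^{1-\theta}(X_1')^{\theta}.
\]
Take $g$ in the right-hand side, factored as $|g|\le|\beta|^{1-\theta}|\gamma|^{\theta}$ with $\beta\in X_0'$ and $\gamma\in X_1'$. Applying the Step~1 construction in the dual lattices gives an analytic family $G(z)$ with $G(\theta)=g$, $\|G(it)\|_{X_0'}\le\|\beta\|$ and $\|G(1+it)\|_{X_1'}\le\|\gamma\|$.

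The scalar function $\phi(z)=\int F(z)G(z)\,d\mu$ is bounded and analytic in the strip, with boundary bounds $\|F\|_{\mathcal F}\|\beta\|$ and $\|F\|_{\mathcal F}\|\gamma\|$. The three lines lemma then yields
\[
|\langle a,g\rangle|\le\|F\|_{\mathcal F}\,\|\beta\|^{1-\theta}\|\gamma\|^{\theta}.
\]
Hence $\|a\|_{(X_0^{1-\theta}X_1^\theta)''}\le\|a\|_{[X_0,X_1]_\theta}$.

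\emph{Step 3: removing the bidual, the main obstacle.} It remains to pass from the Köthe bidual norm to the actual norm of $X_0^{1-\theta}X_1^{\theta}$. This requires a Fatou property, or equivalently that norming functionals can be taken from the Köthe dual. This is exactly where the Radon--Nikodym hypothesis is used. By the Bergh--Löfström duality theorem, if one of $X_0,X_1$ has RNP then
\[
[X_0,X_1]_\theta^{*}=[X_0^{*},X_1^{*}]^{\theta}=[X_0^*,X_1^*]_\theta,
\]
so the interpolation norm is normed by the dual interpolation space. On the other side, an RNP lattice is order continuous on the relevant part, so its dual coincides with the Köthe dual. I would first try to run the Step~2 duality argument directly with functionals in $[X_0^*,X_1^*]_\theta$, identified via Step~1 with $(X_0')^{1-\theta}(X_1')^{\theta}$. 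This would show that the two norms agree on a dense set, namely simple functions, which are dense in $[X_0,X_1]_\theta$. Equality on all of $[X_0,X_1]_\theta$ then follows by completeness and the lattice monotone-convergence argument from Step~1. For the $L_p$-valued spaces used later in the paper, all of this is classical, since these spaces are reflexive or order continuous.
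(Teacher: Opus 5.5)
The paper gives no proof of this statement; it is quoted from the literature. Judged on its own, your outline has a genuine gap. The hypothesis is invoked in the wrong step, and the step where it is indispensable rests on an argument that is false in general.

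\emph{Step 1, the passage $\epsilon\to0$.} The unit ball of the lower complex method $[X_0,X_1]_\theta$ is not closed under increasing pointwise limits. Take $X_0=\ell_\infty$ and let $X_1$ carry the norm $\sup_k2^{-k}|x_k|$. Then $X_0^{1-\theta}X_1^\theta$ is the weighted $\ell_\infty$ with norm $\sup_k2^{-k\theta}|x_k|$. On the other hand, $[X_0,X_1]_\theta$ lies in the closure of $X_0\cap X_1=\ell_\infty$ in that norm. The sequence $a_k=2^{k\theta}$ has truncations $a\1_{[1,N]}$ of interpolation norm at most $1$ increasing to it, yet it is at distance $1$ from $\ell_\infty$.

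Your Step 1 never uses the Radon--Nikodym hypothesis, so it would also ``prove'' $X_0^{1-\theta}X_1^\theta\subseteq[X_0,X_1]_\theta$ for this couple. In general $[X_0,X_1]_\theta$ is only the closure of $X_0\cap X_1$ in the Calder\'on product. This inclusion is exactly where the hypothesis is needed, and the missing idea is order continuity:
\begin{itemize}
\item A lattice with RNP contains no copy of $c_0$, so it is a KB-space and hence order continuous.
\item If, say, $X_0$ is order continuous, then so is $Y=X_0^{1-\theta}X_1^\theta$, because $|a\1_E|\le|b\1_E|^{1-\theta}|c|^\theta$.
\item Replace $b$ by $b\1_{\{a\neq0\}}$ and apply your truncated construction to the differences $a_\epsilon-a_{\epsilon'}=a\1_{S_\epsilon\setminus S_{\epsilon'}}$. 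This gives $\|a_\epsilon-a_{\epsilon'}\|_{[X_0,X_1]_\theta}\le\|b\1_{S_{\epsilon'}^c}\|_{X_0}^{1-\theta}\|c\|_{X_1}^\theta\to0$.
\item So $(a_\epsilon)$ is Cauchy in $[X_0,X_1]_\theta$. Its limit is $a$, because convergence in $X_0+X_1$ forces convergence in measure.
\end{itemize}
If instead $X_1$ is the order continuous one, use $c$ and $\theta>0$.

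\emph{Step 3.} This step also fails as written, for three reasons.
\begin{itemize}
\item The Bergh--L\"ofstr\"om duality theorem needs $X_0\cap X_1$ to be dense in both spaces.
\item Equality of the upper and lower methods for the dual couple needs a hypothesis on $X_0^*$ or $X_1^*$, not on $X_0$ or $X_1$. For example, $\ell_1$ and $\ell_1(2^{-k})$ have RNP, yet $[\ell_1,\ell_1(2^{-k})]_\theta^*$ is the full weighted $\ell_\infty$ with norm $\sup_k2^{k\theta}|y_k|$. The lower method of the dual couple is a proper closed subspace of it.
\item Your ``identified via Step 1'' would require the hypothesis for the dual couple, which you do not have.
\end{itemize}

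None of this machinery is needed. Once $Y$ is order continuous, $Y^*=Y'$, so Hahn--Banach gives $\|a\|_{Y''}=\|a\|_Y$, and your Step 2 finishes the proof. Alternatively, Calder\'on's direct argument gives $\|a\|_Y\le\|a\|_{[X_0,X_1]_\theta}$ for every couple, without Lozanovskii duality. It uses subharmonicity of $\log|F(z)(x)|$, the Poisson kernel of the strip, and Jensen's inequality.

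Finally, in the paper's only use of this statement (Theorem \ref{thm:tent}), the lattices $\T_2$ and $\T_\infty$ live on the finite set $\mathcal{D}_n\times\mathcal{D}_n$. They are not $L_p$-type spaces, as you claim. There no limiting argument is needed, and your Steps 1--2 go through directly.
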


We conclude this subsection with the following interpolation result which is one of the key ingredients for proving Theorem \ref{thm:main}. Some versions of the following result might be known to specialists. However, the precise statement below is not given anywhere in the literature. The most important here is the constant $q^2.$ The proof is provided in Appendix \ref{sec:product-interpolation}. 

\begin{thm}\label{thm:product-interpolation}
Let $2\leq q<\infty$ and $\theta=1-\frac{2}{q}.$ We have
\[
\|\cdot\|_{L_q(\Omega_n\times\Omega_n)}\leq c_{{\rm abs}}q^2\|\cdot\|_{[L_2(\Omega_n\times\Omega_n),\BMO(\Omega_n\times\Omega_n)]_{\theta}}.
\]
\end{thm}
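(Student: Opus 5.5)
Since the left-hand side is $\|\cdot\|_{L_q}$, it suffices to show: if $f$ lies in the unit ball of $[L_2,\BMO]_\theta$ (on $\Omega_n\times\Omega_n$), then $\|f\|_{L_q}\le c_{\rm abs}q^2$. Neither endpoint is a lattice with the Radon--Nikodym property in the relevant sense, so Theorem \ref{lem:lattice} cannot be applied directly. Instead we route through a square-function (Carleson/tent-space) model that linearizes both endpoints. Concretely, map $f$ to its Haar coefficient sequence $\{\langle f,h_R\rangle\}_{R\in\mathcal R_n}$, viewed as a function on $\Omega_n\times\Omega_n$ with values in $\ell_2(\mathcal R_n)$ via $F(\omega)=\{\langle f,h_R\rangle |R|^{-1/2}\1_R(\omega)\}_R$.

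Under this map, $L_2$ is isometric to $L_2(\ell_2)$. For the $\BMO$ endpoint, $\|f\|_{\BMO}$ is a Carleson-measure condition on the coefficients, which embeds into a tent-type space $T_\infty$. The endpoints then become the mixed-norm couple $(T_2,T_\infty)=(L_2(\ell_2),T_\infty)$, and the plan is to interpolate tent spaces instead of function spaces. Because $F$ is a lattice-valued (sequence) object, the Calderón product $T_2^{1-\theta}T_\infty^{\theta}$ is available: $T_2$ is reflexive, hence has the Radon--Nikodym property, so Theorem \ref{lem:lattice} identifies $[T_2,T_\infty]_\theta$ with $T_2^{1-\theta}T_\infty^\theta$. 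The next step is to show that this product embeds into $T_q$, the space of sequences whose square function $S(F)=(\sum_R |c_R|^2|R|^{-1}\1_R)^{1/2}$ lies in $L_q$, with a constant $O(q)$. This should go through a product-space Fefferman--Stein duality / good-$\lambda$ type bound: if $|c|\le |b|^{1-\theta}|d|^\theta$ with $b\in T_2$ and $d$ Carleson, then $\|S(c)\|_{L_q}\lesssim \|S(b)\|_2^{2/q}\|d\|_{\rm Carl}^{1-2/q}$.

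The main obstacle lies in the multiparameter setting, where Carleson measures must be tested over arbitrary open sets $U$ rather than rectangles. The natural tool here is Journé's lemma, or alternatively the Chang--Fefferman argument via the strong maximal function. The level sets $\{S(b)>\lambda\}$ have to be enlarged through the strong maximal function $M_s$, whose $L_2$ bound is absolute in the dyadic product case ($\|M_s\|_{2\to2}\le 4$). The linear dependence on $q$ at this step should come from the decay of the level-set estimate, $|\{S>2\lambda\}|\lesssim e^{-c\lambda}$-type behaviour relative to the BMO norm, which yields a factor $q$ after integrating $\lambda^{q-1}$.

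The remaining step is to return from the square function to $f$ itself. We use the dyadic two-parameter Burkholder--Gundy inequality $\|f\|_{L_q}\le C_q\|S f\|_{L_q}$ with $C_q\lesssim q$, obtained by iterating the one-parameter bound with constant $O(q)$, since $S$ is a product of two martingale square functions in the two coordinates. Combining this factor $q$ with the factor $q$ from the tent-space embedding gives the constant $c_{\rm abs}q^2$. Finally, the map $f\mapsto F$ is a retraction: the inverse $F\mapsto\sum_R\int F_R|R|^{-1/2}\1_R\,d\mu\, h_R$ is bounded $T_2\to L_2$ and $T_\infty\to\BMO$ with norm $1$. Consequently $[L_2,\BMO]_\theta$ is the image of $[T_2,T_\infty]_\theta$ by the retraction principle, and all the constants above are independent of $n$.
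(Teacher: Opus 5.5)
Your architecture is the paper's. $\mathcal C$ carries $(L_2,\BMO)$ isometrically onto the sequence couple $(\T_2,\T_\infty)$. The Radon--Nikodym property of $\T_2=\ell_2$ and Theorem \ref{lem:lattice} give $[\T_2,\T_\infty]_\theta=\T_2^{1-\theta}\T_\infty^{\theta}$. This Calder\'on product embeds into $\T_q$, $\|\mathcal C F\|_{\T_q}$ is the product square-function norm, and a double-dyadic Burkholder--Gundy inequality concludes. However, neither step that produces the constant $q^2$ is established.

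\textbf{First gap: the tent-space embedding.} The heart of the proof is the embedding $\|S(c)\|_{L_q}\le c_{\rm abs}\,q\,\|b\|_{\T_2}^{2/q}\|d\|_{\T_\infty}^{1-2/q}$ for $|c|\le|b|^{2/q}|d|^{1-2/q}$. You state the right inequality but give only a heuristic, and its mechanism fails. No bound $|\{S(c)>\lambda\}|\lesssim e^{-\kappa\lambda}$ holds uniformly on the unit ball of $\T_2^{1-\theta}\T_\infty^{\theta}$. For a single rectangle $R$ with $|R|=(\mu_n\otimes\mu_n)(R)$, take $b=\delta_R$ and $d=|R|^{1/2}\delta_R$. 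Then $\|b\|_{\T_2}=\|d\|_{\T_\infty}=1$, while $S(b^{2/q}d^{1-2/q})=|R|^{-1/q}\1_R$ has only the power tail $\lambda^{-q}$. John--Nirenberg decay belongs to the $\T_\infty$ endpoint alone, and the $L_2$ bound of the strong maximal function is not where the factor $q$ comes from. Journ\'e's lemma is also unnecessary, because $\|\cdot\|_{\T_\infty}$ is a supremum over arbitrary sets $U$.

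The missing argument is short. Fix $0\le g$ in the unit ball of $L_s$ with $s=\frac{q}{q-2}$. Then $\int S(c)^2g=\sum_R|c_R|^2\fint_R g\le\sum_R|b_R|^{4/q}|d_R|^{2-4/q}\fint_R g$. H\"older with exponents $\frac q2$ and $s$ bounds this by $\|b\|_{\T_2}^{4/q}\big(\sum_R|d_R|^2(\fint_R g)^s\big)^{1/s}$. The Carleson embedding $\sum_R|d_R|^2(\fint_R g)^s\le\|d\|_{\T_\infty}^2\|\mathcal M g\|_{L_s}^s$ follows from the layer-cake formula, since $\bigcup\{R:\fint_R g>t\}$ is an admissible $U$ contained in $\{\mathcal M g>t\}$. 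The factor $q$ then comes from $\|\mathcal M\|_{L_s\circlearrowleft}\le(s')^2=(q/2)^2$, after a square root. In other words, it reflects the blow-up of the two-parameter Doob maximal function as $s\to1$.

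\textbf{Second gap: the Burkholder--Gundy constant.} Iterating a one-parameter inequality $\|f\|_{L_q}\le C_1(q)\|Sf\|_{L_q}$ over the two coordinates yields the constant $C_1(q)^2$. With $C_1(q)=O(q)$, as you propose, this gives $O(q^2)$ rather than $O(q)$, and your final constant becomes $q^3$. To get $O(q)$ by iteration you need the sharper dyadic bound $C_1(q)=O(\sqrt q)$. It holds because dyadic martingale differences have predictable absolute values, which gives subgaussian good-$\lambda$ estimates. You need it in $\ell_2$-valued form for the second application, which acts on $y\mapsto(d^{(1)}_kF(x,y))_k$ with $d^{(1)}_kF=\big((E_{\mathcal F_{k+1}}-E_{\mathcal F_k})\otimes{\rm id}\big)F$. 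Alternatively, cite Pipher's double-dyadic inequality (Lemma \ref{lem:product-square}) directly, as the paper does.

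A minor point: your $\ell_2(\mathcal R_n)$-valued model and the retraction are superfluous, since $\mathcal C$ already identifies the two couples isometrically.
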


\section{Reduction of Theorem \ref{thm:main} via probabilistic representation }\label{sec:lift}

In this section, we first reduce Theorem \ref{thm:main} to establishing the boundedness of the operator $\mathcal{B}_{n,\frac1p}.$ After that, using a probabilistic representation of $\mathcal{B}_{n,z}$ via $\mathcal{A}_{n,z},$  we further reduce  Theorem \ref{thm:main} to studying the boundedness of $\mathcal{A}_{n,\frac1p}.$
 
\subsection{Reduction to the family $\mathcal{B}_{n,z}$}

For $z\in \mathbb{C}$ with $1/2\leq\Re(z)\leq1$, we define
\begin{equation*}
\mathcal{B}_{n,z}h=\Delta^{-z}\left(\sum_{j=1}^{n}D_{j}h_{j}\right),
\end{equation*}
where $h=(h_{j})_{j=1}^{n}\in L_{\infty}(\Omega_{n};\ell_{2}^{n})$. It is clear that for each $A\subset  [n]$,  
\begin{equation*}
\widehat{\mathcal{B}_{n,z}h}(A)=|A|^{-z}\sum_{j\in A}\widehat{h_j}(A),\quad A\neq\emptyset.
\end{equation*}

\begin{lem}\label{lem:dual-reduction} Let $1\leq p\leq\infty.$ The dual of $\nabla\Delta^{-z}:L_p(\Omega_n)\to L_p(\Omega_n,\ell^{n}_{2})$ is $\mathcal{B}_{n,z}:L_{\frac{p}{p-1}}(\Omega_n;\ell^{n}_{2})\to L_{\frac{p}{p-1}}(\Omega_n).$
\end{lem}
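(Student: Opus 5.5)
The plan is to compute the adjoint directly on the finite-dimensional space of functions on $\Omega_n$, using the Walsh expansion and the self-adjointness of $D_j$ and of $\Delta^{-z}$ (understood as acting on mean-zero parts). Since $\Omega_n$ is finite, all the $L_p$ spaces coincide as vector spaces. The duality between $L_p(\Omega_n;\ell_2^n)$ and $L_{p'}(\Omega_n;\ell_2^n)$, with $p'=\frac{p}{p-1}$, is given by the pairing
\[
\langle F,h\rangle=\sum_{j=1}^n\int_{\Omega_n}F_j\overline{h_j}\,d\mu_n .
\]
This identification is isometric for every $1\leq p\leq\infty$, because $\ell_2^n$ is reflexive and the measure space is finite. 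So it suffices to verify the algebraic identity $\langle \nabla\Delta^{-z}f,h\rangle=\langle f,\mathcal{B}_{n,z}h\rangle$ for all $f$ and $h$. The operator-norm statement then follows from the standard fact that the adjoint has the same norm.

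To check the identity, I expand both sides in the Walsh basis. We have $D_j\Delta^{-z}f=\sum_{A\ni j}|A|^{-z}\widehat f(A)w_A$, where $A$ is nonempty automatically since $j\in A$. Parseval then gives
\[
\sum_j\langle D_j\Delta^{-z}f,h_j\rangle=\sum_{j}\sum_{A\ni j}|A|^{-z}\widehat f(A)\overline{\widehat{h_j}(A)}.
\]
Next I interchange the sums to obtain $\sum_{A\neq\emptyset}\widehat f(A)\,\overline{\overline{|A|^{-z}}\sum_{j\in A}\widehat{h_j}(A)}$. Comparing with the stated Walsh coefficients $\widehat{\mathcal{B}_{n,z}h}(A)=|A|^{-z}\sum_{j\in A}\widehat{h_j}(A)$, this is exactly $\langle f,\mathcal{B}_{n,\bar z}h\rangle$ for the sesquilinear pairing. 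For the bilinear pairing, or for real $z$, it is $\langle f,\mathcal{B}_{n,z}h\rangle$.

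The main point requiring care is this conjugation convention. I would adopt the bilinear (Banach-space) duality pairing $\int fg$, under which the Walsh functions are real, and every step above goes through without conjugates, yielding $\mathcal{B}_{n,z}$ exactly. Alternatively, one can note that only the norm matters for the application, and $|A|^{-\bar z}$ versus $|A|^{-z}$ is handled by the same argument. With either convention the norms agree: $\|\nabla\Delta^{-z}\|_{L_p\to L_p(\ell_2^n)}=\|\mathcal{B}_{n,z}\|_{L_{p'}(\ell_2^n)\to L_{p'}}$. This is what will be used to reduce Theorem \ref{thm:main} to bounding $\mathcal{B}_{n,1/p}$ on $L_{p'}$.
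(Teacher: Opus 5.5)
Your proof is correct and is essentially the paper's argument: the paper obtains $\sum_j\langle D_j\Delta^{-z}g,h_j\rangle=\langle g,\Delta^{-z}\sum_j D_jh_j\rangle$ from the self-adjointness of $D_j$ and its commutation with $\Delta^{-z}$, and your Walsh-coefficient computation checks exactly this identity term by term. Your care about the conjugation convention is warranted: the paper's appeal to self-adjointness of $\Delta^{-z}$ is literally valid only for real $z$ or for the bilinear pairing you adopt, and in the application $z=\frac1p$ is real, so nothing downstream is affected.
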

\begin{proof} Self-adjointness and commutation of $D_{j}$ and $\Delta^{-z}$ give
$$
\langle \nabla\Delta^{-z}g,h\rangle_{L_{2}(\ell_{2})}=\sum_{j=1}^{n}\langle D_{j}\Delta^{-z}g,h_{j}\rangle=\left\langle g,\Delta^{-z}\sum_{j=1}^{n}D_{j}h_{j}\right\rangle.
$$
\end{proof}

Due to Lemma \ref{lem:dual-reduction}, the proof of Theorem \ref{thm:main} is equivalent to establish the boundedness of $\mathcal{B}_{n,\frac{1}{p}}$ from $L_{\frac{p}{p-1}}(\Omega_n;\ell^{n}_{2})$ to $L_{\frac{p}{p-1}}(\Omega_n)$.

\subsection{The family $\mathcal{A}_{n,z}$}

For $A\subset [n]$ and $j\in [n],$ set
$$V_{A,j}=\{\tau\in\mathbb{R}^n_+:\ \tau_i\geq \tau_j,\quad i\in A\}.$$
For a given $\tau\in \mathbb{R}^{n}_{+}$ and $1\leq j\leq n,$ define a projection $P_j^{\tau}$ on $L_2(\Omega_n)$ by setting
$$P_j^{\tau}w_A=\1_A(j)\1_{V_{A,j}}(\tau)w_A,\quad A\subset  [n].$$
The family $\{\mathcal{A}_{n,z}\}_{z\in \mathbb{C}}$ is defined as follows. For $z\in\mathbb{C},$ let
\begin{equation*}
(\mathcal{A}_{n,z}h)(\tau,\sigma)
 =\sum_{j,k=1}^{n}
 \tau_j^{\frac{z-1}{2}}\sigma_k^{\frac{z-1}{2}}
 (P_j^\tau\otimes P_k^{\sigma})(\mathcal{J}h_j),\quad \tau,\sigma\in\mathbb{R}^n_+,
\end{equation*}
where $h:\Omega_n\to \ell ^n_2$ and $h=(h_1,\cdots,h_n)$. Moreover, the isometric lifting operator $\mathcal{J}:L_1(\Omega_{n})\to L_1(\Omega_{n}\times \Omega_{n})$ is defined by
\begin{equation*}
(\mathcal{J}f)(x,y)\coloneqq f(x_{1}y_{1},\cdots, x_{n}y_{n}),
\end{equation*}
for all $f:\Omega_{n}\to \mathbb{C}$ and $x=(x_{1},\cdots,x_{n})$, $y=(y_{1},\cdots, y_{n})\in \Omega_{n}$. The following lemma is immediately from the definition.
\begin{lem}\label{lem:J}
For every $1\leq r\leq\infty$, we have
\begin{equation*}
 \|\mathcal{J}f\|_{L_{r}(\Omega_{n}\times \Omega_{n})}=\|f\|_{L_{r}(\Omega_{n})}.
\end{equation*}
Moreover, for each $A\subset [n],$
$$\mathcal{J}w_A=w_A\otimes w_A,\qquad (D_j\otimes{\rm id})(\mathcal{J}f)=({\rm id}\otimes D_j)(\mathcal{J}f)=\mathcal{J}(D_jf).$$
\end{lem}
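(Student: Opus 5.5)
The statement to prove is Lemma \ref{lem:J}. The plan is to check everything directly on characters, using the fact that $\mathcal{J}$ is composition with the map $\phi:\Omega_n\times\Omega_n\to\Omega_n$, $\phi(x,y)=(x_1y_1,\dots,x_ny_n)$.

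\textbf{The norm identity.} I will show that $\phi$ pushes $\mu_n\otimes\mu_n$ forward to $\mu_n$. Fix $y$. The map $x\mapsto (x_iy_i)_i$ is a bijection of $\Omega_n$, so it preserves the uniform measure. Integrating over $y$ with Fubini then gives
$$\int_{\Omega_n\times\Omega_n}|f\circ\phi|^r\,d(\mu_n\otimes\mu_n)=\int_{\Omega_n}|f|^r\,d\mu_n .$$
This settles $1\le r<\infty$. For $r=\infty$, the map $\phi$ is onto, so the essential suprema agree; on a finite space with full-support measure these are just maxima.

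\textbf{The character identity.} For $A\subset[n]$ we compute
$$\mathcal{J}w_A(x,y)=\prod_{j\in A}x_jy_j=\Big(\prod_{j\in A}x_j\Big)\Big(\prod_{j\in A}y_j\Big)=w_A(x)\,w_A(y).$$

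\textbf{The derivative identities.} By linearity of $\mathcal{J}$ and of each $D_j$, together with the expansion $f=\sum_A\widehat f(A)w_A$, it suffices to treat $f=w_A$. In that case
$$(D_j\otimes{\rm id})(w_A\otimes w_A)=\1_A(j)\,w_A\otimes w_A=\mathcal{J}(\1_A(j)w_A)=\mathcal{J}(D_jw_A),$$
and the computation for ${\rm id}\otimes D_j$ is symmetric.

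As a sanity check, the identity can also be read off pointwise. Flipping $x_j$ flips the $j$-th coordinate of $\phi(x,y)$, that is, $\phi(x^{(j)},y)=\phi(x,y)^{(j)}$. Hence
$$\tfrac12\big(f(\phi(x,y))-f(\phi(x^{(j)},y))\big)=(D_jf)(\phi(x,y)).$$

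There is no real obstacle here. The only point needing care is the measure-preservation argument, which I handle by fixing $y$ and using that coordinatewise multiplication by $y$ is a bijection of $\Omega_n$.
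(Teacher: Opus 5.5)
Your proposal is correct and takes the same approach as the paper, which gives no written proof and simply says the lemma is immediate from the definition of $\mathcal{J}$. Your direct checks are exactly that routine verification: the map $(x,y)\mapsto(x_1y_1,\dots,x_ny_n)$ pushes $\mu_n\otimes\mu_n$ forward to $\mu_n$, you compute $\mathcal{J}$ on Walsh functions, and you verify the derivative identities either on characters or pointwise via $\phi(x^{(j)},y)=\phi(x,y)^{(j)}$.
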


\subsection{Probabilistic representation}

In order to provide a probabilistic representation of the operator $\mathcal{B}_{n,z}$, we introduce specific probability measure on a certain ``clock space". We equip $\mathbb{R}_+$ with a probability measure $\nu$ given by $d\nu=e^{-t}dt$ and equip $\mathbb{R}^n_+$ with the product measure $\nu^{\otimes n}$ making $(\mathbb{R}^n_+,\nu^{\otimes n})$ a probability space. The ``clock space" is now defined by
$${\rm Clock}_n=(\mathbb{R}^n_+\times\mathbb{R}^n_+,\nu^{\otimes n}\otimes\nu^{\otimes n}).$$

Our probabilistic representation of the operator $\mathcal{B}_{n,z}$ is given below. This result is one of the main contributions of the present paper and serves as a crucial bridge in the proof.
\begin{prop}\label{thm:averaging}
For every $z$ in the closed strip $\{z\in \mathbb{C}:\Re(z)\in [1/2,1]\}$, we have
\begin{equation*}
\mathbb{E}_{\nu^{\otimes n}\otimes \nu^{\otimes n}}\circ\mathcal{A}_{n,z}=\Gamma^2\left(\frac{z+1}{2}\right)\mathcal{J}\circ\mathcal{B}_{n,z},
\end{equation*}
where $\mathbb{E}_{\nu^{\otimes n}\otimes \nu^{\otimes n}}$ stands for the expectation taken with respect to $\nu^{\otimes n}\otimes \nu^{\otimes n}$.
\end{prop}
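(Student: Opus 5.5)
Both sides are linear in $h$ and act diagonally on Walsh–tensor modes, so I will compare them on a single mode. It suffices to take $h$ with only the $j$-th component nonzero and equal to $w_A$. Then $\mathcal{J}w_A=w_A\otimes w_A$ by Lemma \ref{lem:J}. The right-hand side is $\Gamma^2(\frac{z+1}{2})\,|A|^{-z}\1_A(j)\,w_A\otimes w_A$ (zero if $A=\emptyset$). The left-hand side is
\[
\sum_{k,l=1}^{n}\mathbb{E}\Big[\tau_k^{\frac{z-1}{2}}\1_{A}(k)\1_{V_{A,k}}(\tau)\Big]\,\mathbb{E}\Big[\sigma_l^{\frac{z-1}{2}}\1_{A}(l)\1_{V_{A,l}}(\sigma)\Big]\,\1_{k=j}\1_{l=j}\,w_A\otimes w_A .
\]
This uses that $\mathcal{A}_{n,z}$ only applies $P^\tau_j\otimes P^\sigma_k$ to $\mathcal{J}h_j$. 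So I will recheck the indexing in the definition. As written, the double sum is over $j,k$ with $h_j$ in both tensor factors, and $P_j^\tau\otimes P_k^\sigma$ acts on $w_A\otimes w_A$ by the scalar $\1_A(j)\1_{V_{A,j}}(\tau)\1_A(k)\1_{V_{A,k}}(\sigma)$. The expectation factorizes because $\tau$ and $\sigma$ are independent, and the whole computation reduces to one-parameter integrals.

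The key step is computing $\mathbb{E}_{\nu^{\otimes n}}\big[\tau_j^{\frac{z-1}{2}}\1_{V_{A,j}}(\tau)\big]$ for $j\in A$. Conditioning on $\tau_j=t$, the event that $\tau_i\geq t$ for all $i\in A\setminus\{j\}$ has probability $e^{-(|A|-1)t}$. So the expectation equals
\[
\int_0^\infty t^{\frac{z-1}{2}}e^{-|A|t}\,dt=\Gamma\Big(\tfrac{z+1}{2}\Big)|A|^{-\frac{z+1}{2}},
\]
which converges since $\Re\frac{z+1}{2}>0$. The same value holds on the $\sigma$ side. Their product is $\Gamma^2(\frac{z+1}{2})|A|^{-z-1}$.

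The remaining issue, and the only real obstacle, is the bookkeeping of the sums. The target is $|A|^{-z}\sum_{j\in A}\widehat{h_j}(A)$, while a single product gives $|A|^{-z-1}$ per index pair. The missing factor $|A|$ must come from summing over the second index $k\in A$, which contributes $|A|$ equal terms for each $j$. In the $\sigma$-clock every $k\in A$ gives the same value, so
\[
\sum_{k\in A}\mathbb{E}\big[\sigma_k^{\frac{z-1}{2}}\1_{V_{A,k}}(\sigma)\big]=|A|\,\Gamma\Big(\tfrac{z+1}{2}\Big)|A|^{-\frac{z+1}{2}}.
\]
Combined with the $\tau$-factor for $j\in A$, this yields $\Gamma^2(\frac{z+1}{2})|A|^{-z}$, as required. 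This is exactly the advertised "spectral-scaling" role of the second clock: $\{V_{A,k}\}_{k\in A}$ partitions $\mathbb{R}^n_+$ up to null sets, so the $\sigma$-sum is really $\mathbb{E}[(\min_{k\in A}\sigma_k)^{\frac{z-1}{2}}]$ with $\min_{k\in A}\sigma_k\sim\mathrm{Exp}(|A|)$. Summing over $j$ and over $A$ then gives the identity. The interchange of expectation with the finite sums is justified by absolute integrability of $t^{\frac{\Re z-1}{2}}e^{-t}$ for $\Re z\geq 1/2$.
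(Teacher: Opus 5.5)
Your argument is correct and is the paper's proof: test on $h=(0,\dots,0,w_A,0,\dots,0)$, use $\mathcal{J}w_A=w_A\otimes w_A$, factor the clock expectation, evaluate $\int_0^\infty t^{\frac{z-1}{2}}e^{-|A|t}\,dt=\Gamma(\frac{z+1}{2})|A|^{-\frac{z+1}{2}}$, and recover the missing factor $|A|$ from the free sum over the $\sigma$-index $k\in A$ (your reading of that sum as $\mathbb{E}[(\min_{k\in A}\sigma_k)^{\frac{z-1}{2}}]$ is a nice way to see it). You should delete your first displayed formula for the left-hand side: its factor $\mathbf{1}_{l=j}$ contradicts the definition of $\mathcal{A}_{n,z}$, in which only the $\tau$-index is tied to the component $h_j$, and it would lose exactly that factor $|A|$.
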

\begin{proof}
It suffices to prove the equality on the function
$$h=
\begin{pmatrix}
\underbrace{0,\cdots,0,}_{(l-1)\mbox{ times}} & w_A, & \underbrace{0,\cdots,0}_{(n-l)\mbox{ times}}
\end{pmatrix},\quad A\subset [n].$$
In this case, we have
$$(\mathcal{A}_{n,z}h)(\tau,\sigma)=\sum_{k=1}^n\tau_l^{\frac{z-1}{2}}\sigma_k^{\frac{z-1}{2}}
 (P_l^{\tau}\otimes P_k^{\sigma})(\mathcal{J}w_A),\quad \tau,\sigma\in\mathbb{R}^n_+.$$
By Lemma \ref{lem:J}, $\mathcal{J}w_A=w_A\otimes w_A.$ This allows us to write
$$(P_l^{\tau}\otimes P_k^{\sigma})(\mathcal{J} w_A)=P_l^{\tau}w_A\otimes P_k^{\sigma}w_A=\1_A(l)\1_A(k)\1_{V_{A,l}}(\tau)\1_{V_{A,k}}(\sigma)w_A\otimes w_A.$$
We now have
$$(\mathcal{A}_{n,z}h)(\tau,\sigma)
 =\sum_{k=1}^n
 \tau_l^{\frac{z-1}{2}}\sigma_k^{\frac{z-1}{2}}\1_A(l)\1_A(k)\1_{V_{A,l}}(\tau)\1_{V_{A,k}}(\sigma)
 w_A\otimes w_A.$$
Thus,
\begin{align*}
&\mathbb{E}_{\nu^{\otimes n}\otimes \nu^{\otimes n}}(\mathcal{A}_{n,z}h)\\
=&\sum_{k=1}^n\left(\1_A(l)\int_{V_{A,l}}\tau_l^{\frac{z-1}{2}}\prod_{m=1}^ne^{-\tau_m}d\tau_m\right)\\
&\times\left(\1_A(k)\int_{V_{A,k}}\sigma_k^{\frac{z-1}{2}}\prod_{m=1}^ne^{-\sigma_m}d\sigma_m\right)\times (w_A\otimes w_A).
\end{align*}
It follows from the Fubini theorem that for every $l\in A$ we have
\begin{align*}
\int_{V_{A,l}}\tau_l^{\frac{z-1}{2}}\prod_{m=1}^ne^{-\tau_m}d\tau_m&=\int_{V_{A,l}}\tau_l^{\frac{z-1}{2}}\prod_{m=1}^ne^{-\tau_m}d\tau_m\\
&=\int_0^{\infty}u^{\frac{z-1}{2}}\Big(\int_u^{\infty}e^{-v}dv\Big)^{|A|-1}e^{-u}du\\
&=\int_0^{\infty}u^{\frac{z-1}{2}}e^{-|A|u}du=|A|^{-\frac{z+1}{2}}\Gamma\left(\frac{z+1}{2}\right).
\end{align*}
Hence,
\begin{align*}
\mathbb{E}_{\nu^{\otimes n}\otimes \nu^{\otimes n}}(\mathcal{A}_{n,z}h)&=\sum_{k=1}^n\1_A(l)\cdot \1_A(k)|A|^{-(z+1)}\Gamma\left(\frac{z+1}{2}\right)^{2}\cdot w_A\otimes w_A\\
&=\1_A(l)|A|^{-z}\cdot w_A\otimes w_A.
\end{align*}

On the other hand, since the function has the following concrete form
$$
h=
\begin{pmatrix}
\underbrace{0,\cdots,0,}_{(l-1)\mbox{ times}} & w_A, & \underbrace{0,\cdots,0}_{(n-l)\mbox{ times}}
\end{pmatrix},
$$
it gives that
$$\mathcal{B}_{n,z}h=\Delta^{-z}\left(\sum_{j=1}^nD_jh_j\right)=\Delta^{-z}D_lw_A=\Delta^{-z}\1_A(l)w_A=|A|^{-z}\1_A(l)w_A.$$
Thus,
$$J(\mathcal{B}_{n,z}h)=|A|^{-z}\1_A(l)\cdot w_A\otimes w_A.$$
Comparing this with the preceding paragraph, we complete the proof.
\end{proof}

\begin{rem}
Thanks to Proposition \ref{thm:averaging} and Lemma \ref{lem:dual-reduction}, the boundedness of $\mathcal{B}_{n,\frac1p}:L_p(\Omega_n,\ell_2^n)\to L_p(\Omega_n)$ (and, hence, Theorem \ref{thm:main}) reduces to the boundedness of $\mathcal{A}_{n,\frac1p}:L_{\frac{p}{p-1}}(\Omega_n,\ell_2^n)\to L_{\frac{p}{p-1}}({\rm Clock}_n\times\Omega_n\times\Omega_n).$
\end{rem}

\begin{rem}
We now explain why there are two instance of $\mathbb{R}^{n}_{+}$ in the ${\rm Clock}_{n}$. Suppose, we take ${\rm Clock}_n=(\mathbb{R}^n_+,\nu^{\otimes n})$ and set
$$(\mathcal{A}_{n,z}h)(\tau)=\sum_{k=1}^n\tau_k^{z-1}P_k^{\tau}h_k.$$
The analogue of Theorem \ref{thm:left} would be
$$\|\mathcal{A}_{n,z}h\|_{L_2({\rm Clock}_n;L_2(\Omega_n))}\leq c_{{\rm abs}}\|h\|_{L_2(\Omega_n;\ell_2^n)},\quad \Re(z)=\frac12.$$
This is, however, not the case. In fact, the left hand side is infinite. The reason is that, when repeating the argument in Theorem \ref{thm:left}, then, instead of the factor
$$\Big(\int_0^{\infty}u^{-\frac12}e^{-u}du\Big)^2,$$
we obtain the factor $\int_0^{\infty}u^{-1}e^{-u}du$ which is infinite.
\end{rem}

\section{Endpoint estimates for $\mathcal{A}_{n,z}$}\label{endpoint estimate section}

We now establish the endpoint estimates for the operator $\mathcal{A}_{n,z}$, which will be needed for the interpolation argument in the next section.
In Section \ref{sec:left}, we show the boundedness  $\mathcal{A}_{n,z}: L_2(\Omega_n; \ell_2^n)\to L_2({\rm Clock}_n;L_2(\Omega_n\times\Omega_n)).$ In Section \ref{sec:right}, we establish the key ingredient which provides the boundedness of $\mathcal{A}_{n,z}(\tau,\sigma)$ from $L_{\infty}(\Omega_n; \ell_2^n)$ to $\BMO(\Omega_n\times\Omega_n)$ for any given $(\tau,\sigma)\in{\rm Clock}_n.$

\subsection{The boundedness of $\mathcal{A}_{z}$ on Hilbert space}\label{sec:left}
\begin{thm}\label{thm:left}
For every $f=(f_1,\cdots,f_n)\in L_2(\Omega_n;\ell^n_2)$, we have
\begin{equation*}
\|\mathcal{A}_{n,z}f\|_{L_2({\rm Clock}_n;L_2(\Omega_n\times\Omega_n))}\leq\pi^{\frac12}\|f\|_{L_2(\Omega_n;\ell_2^n)},\quad \Re(z)=\frac12.
\end{equation*}
\end{thm}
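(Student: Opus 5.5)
The plan is to compute the left-hand side exactly in the Walsh basis. Expanding each component as $f_j=\sum_{A\subset[n]}\widehat{f_j}(A)w_A$ and using $\mathcal{J}w_A=w_A\otimes w_A$ from Lemma \ref{lem:J} together with the definition of $P_j^\tau$, we get for fixed $(\tau,\sigma)$
\[
(\mathcal{A}_{n,z}f)(\tau,\sigma)=\sum_{A\subset[n]}\Big(\sum_{j\in A}\tau_j^{\frac{z-1}{2}}\1_{V_{A,j}}(\tau)\widehat{f_j}(A)\Big)\Big(\sum_{k\in A}\sigma_k^{\frac{z-1}{2}}\1_{V_{A,k}}(\sigma)\Big)\, w_A\otimes w_A .
\]
Since $\{w_A\otimes w_A\}_A$ is orthonormal in $L_2(\Omega_n\times\Omega_n)$, the squared $L_2(\Omega_n\times\Omega_n)$ norm is the sum over $A$ of the squared moduli of the coefficients.

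The key structural observation is that, for a fixed nonempty $A$, the sets $V_{A,j}$ with $j\in A$ are pairwise disjoint up to a $\nu^{\otimes n}$-null set. Indeed, $\tau\in V_{A,j}$ means that $\tau_j$ is the minimum of $\{\tau_i\}_{i\in A}$, and ties have measure zero. Hence, for almost every $\tau$, only the index $j=j^*(\tau,A)$ achieving the minimum survives in the first inner sum, and similarly in $\sigma$. Therefore the squared modulus becomes
\[
\sum_{j\in A}\1_{V_{A,j}}(\tau)\,\tau_j^{\Re(z)-1}|\widehat{f_j}(A)|^2\cdot\sum_{k\in A}\1_{V_{A,k}}(\sigma)\,\sigma_k^{\Re(z)-1},
\]
with no cross terms. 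This is exactly where the cancellation is preserved without any loss.

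Next I integrate over ${\rm Clock}_n$ with $\Re(z)=\tfrac12$, so the exponent is $-\tfrac12$. The same Fubini computation as in the proof of Proposition \ref{thm:averaging} gives
\[
\int_{V_{A,j}}\tau_j^{-1/2}\,d\nu^{\otimes n}(\tau)=\int_0^\infty u^{-1/2}e^{-|A|u}\,du=|A|^{-1/2}\Gamma\!\left(\tfrac12\right)=\sqrt{\pi}\,|A|^{-1/2}.
\]
Summing the $\sigma$-factor over the $|A|$ choices of $k\in A$ yields $\sqrt{\pi}\,|A|^{1/2}$. Multiplying by the $\tau$-factor $\sqrt{\pi}\,|A|^{-1/2}$, the powers of $|A|$ cancel and leave exactly $\pi$. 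This gives
\[
\|\mathcal{A}_{n,z}f\|^2_{L_2({\rm Clock}_n;L_2(\Omega_n\times\Omega_n))}=\pi\sum_{A}\sum_{j\in A}|\widehat{f_j}(A)|^2\le \pi\sum_{j=1}^n\|f_j\|_2^2,
\]
by Parseval, which is the claimed bound.

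The only delicate point is the disjointness step: it reduces the square of a sum over $j$ to a sum of squares without invoking Cauchy--Schwarz, which would cost a factor $|A|$. One also needs to check that the singular weights $\tau_j^{-1/2}$ are integrable, which holds since $\int_0^\infty u^{-1/2}e^{-|A|u}\,du<\infty$. The cancellation of the powers of $|A|$ is precisely why two clock systems are used, as the remark preceding this section explains.
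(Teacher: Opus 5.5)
Your proof is correct and follows the paper's argument. The steps are the same: the Walsh expansion via $\mathcal{J}w_A=w_A\otimes w_A$, orthogonality coming from the almost-everywhere disjointness of the cones $V_{A,j}$, $j\in A$, and the Fubini computation $\int_{V_{A,j}}\tau_j^{-1/2}\,d\nu^{\otimes n}(\tau)=\sqrt{\pi}\,|A|^{-1/2}$, whose combined factor $|A|^{-1}$ is cancelled by the sum over $k\in A$. The only difference in order is that the paper first uses the pairwise orthogonality of the ranges of $P_j^\tau\otimes P_k^\sigma$ and then expands, while you expand first; your explicit remark that this orthogonality holds only off the null set of ties is slightly more careful than the paper's ``for any fixed $\tau,\sigma$''.
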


\begin{proof} 
For every $f=(f_1,\cdots,f_n)\in L_2(\Omega_n;\ell^n_2)$,	we have
\begin{align*}
&\|\mathcal{A}_{n,z}f\|_{L_2({\rm Clock}_n;L_2(\Omega_n\times\Omega_n)}^2\\
=&\int_{\mathbb{R}^{n}_{+}\times \mathbb{R}^{n}_{+}}\|(\mathcal{A}_{n,z}f)(\tau,\sigma)\|_{L_2(\Omega_n\times\Omega_n)}^2\prod_{m=1}^ne^{-\tau_m-\sigma_m}d\tau_md\sigma_m.
\end{align*}

For any fixed $\tau,\sigma\in\mathbb{R}^n_+,$ the projections
$P_j^{\tau}\otimes P_k^{\sigma}$ have pairwise orthogonal ranges. Hence,
$$
\|(\mathcal{A}_{n,z}f)(\tau,\sigma)\|_{L_2(\Omega_n\times\Omega_n)}^2=\sum_{j,k=1}^n\left|\tau_j^{\frac{z-1}{2}}\cdot\sigma_k^{\frac{z-1}{2}}\right|^2
 \|(P_j^{\tau}\otimes P_k^{\sigma})(\mathcal{J}f_j)\|_{L_2(\Omega_n\times\Omega_n)}^2.
$$
For each $j\in  [n]$, it follows  from the Walsh expansion of $h_j$ that 
$$
\mathcal{J}(f_j)=\sum_{A\subset [n]} \widehat{f_j}(A)\cdot (w_A\otimes w_A)
$$
and
\begin{align*}
(P_j^{\tau}\otimes P_k^{\sigma})(\mathcal{J}f_{j})&=\sum_{A\subset  [n]}\widehat{f_j}(A)(P_j^{\tau}w_A\otimes P_k^{\sigma}w_A)\\
&=\sum_{A\subset [n]} \1_A(j)\1_A(k)\1_{V_{A,j}}(\tau)\1_{V_{A,k}}(\sigma)\widehat{f_j}(A)\cdot (w_A\otimes w_A)
\end{align*}
The summands on the right hand side are pairwise orthogonal in $L_2(\Omega_n\times\Omega_n).$ Therefore,
$$\|(P_j^{\tau}\otimes P_k^{\sigma})(\mathcal{J}f_j)\|_{L_2(\Omega_n\times\Omega_n)}^2=\sum_{A\subset [n]} |\widehat{f_j}(A)|^{2} \1_A(j)\1_A(k)\1_{V_{A,j}}(\tau)\1_{V_{A,k}}(\sigma).$$

Note that $\Re(z)=\frac12$. Combining all the preceding equalities, we have
\begin{align*}
&\|\mathcal{A}_{n,z}f\|_{L_2({\rm Clock}_n;L_2(\Omega_n\times\Omega_n))}^2\\
=&\sum_{A\subset [n]} \sum_{j,k=1}^n|\widehat{f_j}(A)|^2\1_A(j)\1_A(k)\left(\int_{V_{A,j}}\int_{V_{A,k}}\tau_j^{-\frac12}\sigma_k^{-\frac12}\prod_{m=1}^ne^{-\tau_m-\sigma_m}d\tau_md\sigma_m\right).
\end{align*}
Fubini theorem yields
\begin{align*}
&\int_{V_{A,j}}\int_{V_{A,k}}\tau_j^{-\frac12}\sigma_k^{-\frac12}\prod_{m=1}^ne^{-\tau_m-\sigma_m}d\tau_md\sigma_m\\
=&\left(\int_0^{\infty}u^{-\frac12}\left(\int_u^{\infty}e^{-v}dv\right)^{|A|-1}e^{-u}du\right)^2=\left(\int_0^{\infty}u^{-\frac12}e^{-|A|u}du\right)^2=\pi|A|^{-1},
\end{align*}
for all $j$, $k\in A$. Finally,
\begin{align*}
&\|\mathcal{A}_{n,z}f\|_{L_2({\rm Clock}_n;L_2(\Omega_n\times\Omega_n))}^2\\
=&\sum_{\emptyset \neq A \subset [n]}\sum_{j,k\in A}|\widehat{f_j}(A)|^2\cdot \pi|A|^{-1}=\pi\sum_{\emptyset \neq A \subset [n]}\sum_{j\in A}|\widehat{f_j}(A)|^2\\
 \leq&\pi\sum_{A\subset [n]}\sum_{j=1}^n|\widehat{h_j}(A)|^2\leq\pi\sum_{j=1}^n\sum_{A\subset [n]} |\widehat{f_j}(A)|^2\\
=&\pi\sum_{j=1}^n\|f_j\|_{L_2(\Omega_n)}^2=\pi\|f\|_{L_2(\Omega_n,\ell^{n}_{2})}^2.
\end{align*}
The proof is complete.
\end{proof}

\subsection{The  $L_{\infty}-\mathrm{BMO}$ estimate for  $\mathcal{A}_{n,z}$ }\label{sec:right}

The following lemma provides an equivalent representation of the projection operator $P^{\tau}_{j}$, which will be useful in the subsequent proofs. For $f:\Omega_n\to \mathbb{C}$ and a permutation $\pi\in \mathfrak{S}_{n},$ the function $f\circ \pi$ is given by
$$(f\circ\pi)(x_{1},\dots,x_{n})\coloneqq f(x_{\pi(1)},\dots, x_{\pi(n)}),\quad (x_{1},\dots, x_{n})\in \Omega_n.$$
\begin{lem}\label{main sijie lemma}
Fix a permutation $\pi\in\mathfrak{S}_n$. For every $\tau\in \mathbb{R}^n_{+}$ such that
\begin{equation}\label{tau vs pi eq}
\tau_{\pi(1)}>\tau_{\pi(2)}>\cdots>\tau_{\pi(n)},
\end{equation}
we have $P_j^{\tau}(h_I\circ\pi)=\delta_{j,\pi(k+1)}h_I\circ\pi$, for every $I\in\mathbb{D}_k,$ $1\leq k<n,$ and for every $j\in [n].$
\end{lem}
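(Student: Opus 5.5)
The approach is to expand $h_I\circ\pi$ in the Walsh basis and check that every Walsh function appearing there is an eigenvector of $P_j^{\tau}$ with the same eigenvalue $\delta_{j,\pi(k+1)}$. Linearity of $P_j^{\tau}$ then gives the claim.

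\emph{Step 1: Walsh expansion of $h_I$.} Let $I=\{a\}\times\Omega_{n-k}$ with $a\in\Omega_k$. By definition $h_I=2^{k/2}(\1_{I^+}-\1_{I^-})$. Hence $h_I(x)=2^{k/2}\,x_{k+1}\prod_{i=1}^{k}\1_{\{x_i=a_i\}}$. Substituting $\1_{\{x_i=a_i\}}=\frac{1+a_ix_i}{2}$ gives
\[
h_I=2^{-k/2}\sum_{B\subseteq[k]}\Big(\prod_{i\in B}a_i\Big)\,w_{B\cup\{k+1\}}.
\]
So $h_I$ is a linear combination of Walsh functions $w_C$, and every such $C$ satisfies $k+1\in C\subseteq[k+1]$. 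In particular, $k+1=\max C$.

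\emph{Step 2: composition with $\pi$.} From the definition $(f\circ\pi)(x)=f(x_{\pi(1)},\dots,x_{\pi(n)})$ we get $w_C\circ\pi(x)=\prod_{m\in C}x_{\pi(m)}=w_{\pi(C)}(x)$. Therefore $h_I\circ\pi$ is a linear combination of the functions $w_{\pi(C)}$, where $C=B\cup\{k+1\}$ and $B\subseteq[k]$.

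\emph{Step 3: action of $P_j^\tau$.} Fix $A=\pi(C)$ with $C$ as above. By definition, $P_j^{\tau}w_A=\1_A(j)\1_{V_{A,j}}(\tau)w_A$. The factor $\1_A(j)\1_{V_{A,j}}(\tau)$ equals $1$ exactly when $j\in A$ and $\tau_j=\min_{i\in A}\tau_i$, and equals $0$ otherwise.

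Under \eqref{tau vs pi eq}, the map $m\mapsto\tau_{\pi(m)}$ is strictly decreasing. So on $A=\pi(C)$ the unique minimizer of $\tau$ is $\pi(\max C)=\pi(k+1)$. The strict inequalities matter here: they make the minimizer unique, so that exactly one $j$ survives.

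It follows that $P_j^{\tau}w_{\pi(C)}=\delta_{j,\pi(k+1)}w_{\pi(C)}$ for every $C$ occurring in the expansion. The eigenvalue $\delta_{j,\pi(k+1)}$ does not depend on $C$. Summing the expansion from Steps 1--2 then gives $P_j^{\tau}(h_I\circ\pi)=\delta_{j,\pi(k+1)}\,h_I\circ\pi$.

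The only step needing care is the bookkeeping of how $\pi$ acts on Walsh indices. The convention $w_C\circ\pi=w_{\pi(C)}$, rather than $w_{\pi^{-1}(C)}$, must agree with the ordering convention in \eqref{tau vs pi eq}, and both should be verified directly from the definitions. The remaining steps are routine.
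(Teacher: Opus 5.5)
Your proof is correct and follows essentially the same route as the paper. Both arguments expand $h_I$ as $2^{-k/2}\sum_{B\subseteq[k]}\bigl(\prod_{i\in B}a_i\bigr)w_{B\cup\{k+1\}}$, use $w_C\circ\pi=w_{\pi(C)}$, and note that the strict ordering makes $\pi(k+1)$ the unique minimizer of $\tau$ on each $\pi(B\cup\{k+1\})$, so every Walsh term is an eigenvector of $P_j^{\tau}$ with the common eigenvalue $\delta_{j,\pi(k+1)}$.
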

\begin{proof} 
	Fix  $1\leq k \leq n-1$ and $j\in [n]$. 
	Since $I\in\mathbb{D}_k$, we may write $I=\{a_I\}\times \Omega_{n-k}$, $a_I\in \Omega_k$.	
Write
$$a_I=\{y_1,\cdots,y_k\}\in\Omega_k.$$
For each $x\in \Omega_n$, we have
$$h_I(x)=2^{-\frac{k}{2}}x_{k+1}\prod_{m=1}^k(1+y_mx_m)=2^{-\frac{k}{2}}\sum_{A\subset[k]}x_{k+1}\prod_{m\in A}y_mx_m.$$
In other words,
$$h_I=2^{-\frac{k}{2}}\sum_{A\subset [k]}w_A(y)\cdot w_{A\cup\{k+1\}}.$$
Note that $w_B\circ\pi=w_{\pi(B)}$ for every $B\subset [n].$ Thus,
$$h_I\circ\pi=2^{-\frac{k}{2}}\sum_{A\subset[k]}w_A(y)\cdot w_{\pi(A\cup\{k+1\})}.$$

\noindent Recall that for each $j\in [n]$ and for every $B\subset  [n]$,
$$
P^{\tau}_j(w_B)=\1_B(j)\1_{V_{B,j}}(\tau)w_B.
$$
Thus,
\begin{equation}\label{ph expression}
P_j^{\tau}(h_I\circ\pi)=\sum_{A\subset[k]}w_A(y)\cdot \1_{\pi(A\cup\{k+1\})}(j) \1_{V_{\pi(A\cup\{k+1\}),j}}(\tau) w_{\pi(A\cup\{k+1\})}.
\end{equation}

We now claim that for every $1\leq j\leq n$ and for every $A\subset [k]$ the following holds
\begin{equation}\label{key equality 1}
\1_{\pi(A\cup\{k+1\})}(j) \1_{V_{\pi(A\cup\{k+1\}),j}}(\tau)=\delta_{j,\pi(k+1)}.
\end{equation}
Indeed, if the left hand side is non-zero, then $j\in\pi(A)\cup\{\pi(k+1)\}.$ Thus, $j=\pi(m)$ for some $1\leq m\leq k+1.$ On the other hand, we have $\tau\in V_{\pi(A\cup\{k+1\}),j},$ which means
$$\tau_i\geq \tau_j,\quad i\in \pi(A\cup\{k+1\}).$$
In particular, $\tau_{\pi(k+1)}\geq\tau_{\pi(m)}.$ Using \eqref{tau vs pi eq}, we obtain $k+1\leq m.$ Hence, $m=k+1$ and, therefore, $j=\pi(k+1)$ so that $\delta_{j,\pi(k+1)}=1.$ 
Conversely, let $\delta_{j,\pi(k+1)}=1$ so that $j=\pi(k+1).$ Hence, $\1_{\pi(A\cup\{k+1\})}(j)=1.$ We have
$$V_{\pi(A\cup\{k+1\}),j}=V_{\pi(A\cup\{k+1\}),\pi(k+1)}=\{\sigma\in\mathbb{R}^n_+:\ \sigma_{\pi(i)}\geq\sigma_{\pi(k+1)},\quad i\in A\cup\{k+1\}\}.$$
Using \eqref{tau vs pi eq}, we conclude that $\tau$ belongs to the set on the right hand side and, therefore, 
$\1_{V_{\pi(A\cup\{k+1\}),j}}(\tau)=1.$ Hence, the left-hand side is non-zero. 

Using \eqref{key equality 1} and \eqref{ph expression}, we conclude that
$$P_j^{\tau}(h_I\circ\pi)=\delta_{j,\pi(k+1)}\sum_{A\subset [k]}w_A(y)\cdot w_{\pi(A\cup\{k+1\})}=\delta_{j,\pi(k+1)}h_I\circ\pi.$$
This completes the proof.
\end{proof}

Before providing a detailed proof of the main inequality, we apply Lemma \ref{main sijie lemma} to derive the following lemma.

\begin{lem}\label{selector lemma}
Let $f=(f_1, \cdots, f_n)\in L_1(\Omega_n;\ell_2^n)$. Fix two permutations $\pi,\rho\in\mathfrak{S}_n.$ Take $\tau,\sigma\in\mathbb{R}^n_+$ such that
$$\tau_{\pi(1)}>\tau_{\pi(2)}>\cdots>\tau_{\pi(n)},\quad \sigma_{\rho(1)}>\sigma_{\rho(2)}>\cdots>\sigma_{\rho(n)}.$$
If $I,J\in \mathcal{D}_n$, then for $s\in \mathbb{R}$
$$|\langle(\mathcal{A}_{n,1+is}f)(\tau,\sigma),(h_I\circ\pi)\otimes(h_J\circ\rho)\rangle|\leq\max_{1\leq j\leq n}|\langle\mathcal{J}f_j,(h_I\circ\pi)\otimes(h_J\circ\rho)\rangle|.$$
\end{lem}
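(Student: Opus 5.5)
Take the inner product of $(\mathcal{A}_{n,1+is}f)(\tau,\sigma)$ with the test function and move each projection onto the test function. On the line $\Re(z)=1$ the weights $\tau_j^{\frac{z-1}{2}}\sigma_k^{\frac{z-1}{2}}=\tau_j^{is/2}\sigma_k^{is/2}$ are unimodular. The inner product is therefore
\[
\langle(\mathcal{A}_{n,1+is}f)(\tau,\sigma),(h_I\circ\pi)\otimes(h_J\circ\rho)\rangle=\sum_{j,k=1}^n\tau_j^{\frac{is}{2}}\sigma_k^{\frac{is}{2}}\langle (P_j^\tau\otimes P_k^\sigma)(\mathcal{J}f_j),(h_I\circ\pi)\otimes(h_J\circ\rho)\rangle .
\]
Each $P_j^\tau$ is diagonal in the Walsh basis with real $0/1$ eigenvalues, so it is a self-adjoint orthogonal projection on $L_2(\Omega_n)$, and $P_j^\tau\otimes P_k^\sigma$ is self-adjoint on $L_2(\Omega_n\times\Omega_n)$. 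This lets us rewrite each summand as $\langle\mathcal{J}f_j,(P_j^\tau(h_I\circ\pi))\otimes(P_k^\sigma(h_J\circ\rho))\rangle$, with the conjugate weights absorbed harmlessly since they have modulus one.

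Next we apply Lemma \ref{main sijie lemma} to both factors. If $I\in\mathbb{D}_a$ with $1\le a<n$, then $P_j^\tau(h_I\circ\pi)=\delta_{j,\pi(a+1)}h_I\circ\pi$. Similarly, if $J\in\mathbb{D}_b$, then $P_k^\sigma(h_J\circ\rho)=\delta_{k,\rho(b+1)}h_J\circ\rho$. So at most one pair $(j,k)=(\pi(a+1),\rho(b+1))$ survives in the double sum, and the inner product equals a unimodular constant times $\langle\mathcal{J}f_{\pi(a+1)},(h_I\circ\pi)\otimes(h_J\circ\rho)\rangle$. Its modulus is then bounded by $\max_j|\langle\mathcal{J}f_j,\cdot\rangle|$.

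The main obstacle is the edge cases not covered verbatim by Lemma \ref{main sijie lemma}: $I=\emptyset$ (where $h_\emptyset=1$), $I\in\mathbb{D}_0$ (that is, $I=\Omega_n$), and the analogous cases for $J$. For $I=\emptyset$ we have $P_j^\tau 1=\1_\emptyset(j)\cdots=0$ for every $j$, so the whole inner product vanishes and the bound is trivial. For $I=\Omega_n\in\mathbb{D}_0$ we have $h_I=x_1$, so $h_I\circ\pi=w_{\{\pi(1)\}}$. Then $P_j^\tau w_{\{\pi(1)\}}=\delta_{j,\pi(1)}w_{\{\pi(1)\}}$, because $V_{\{i\},i}=\mathbb{R}^n_+$. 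This is exactly the formula of Lemma \ref{main sijie lemma} with $k=0$, and indeed the lemma's proof goes through verbatim for $k=0$. The same reasoning applies to $J$ and $\rho$. With these cases checked, the selection of a single surviving index holds for all $I,J\in\mathcal{D}_n$, and the claimed inequality follows.
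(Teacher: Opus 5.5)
Your proof is correct and follows the paper's argument. You move the projections onto the test function by self-adjointness, use Lemma \ref{main sijie lemma} to see that only the pair $(\pi(a+1),\rho(b+1))$ survives, and conclude from the weights $\tau_j^{is/2}\sigma_k^{is/2}$ being unimodular. Your explicit check of the case $I=\Omega_n\in\mathbb{D}_0$, using $h_I\circ\pi=w_{\{\pi(1)\}}$ and $V_{\{\pi(1)\},\pi(1)}=\mathbb{R}^n_+$, is a genuine improvement: the paper's proof treats only $I$ or $J$ empty and $I\in\mathbb{D}_k$, $J\in\mathbb{D}_l$ with $1\le k,l\le n-1$, and leaves that case implicit.
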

\begin{proof}
By Lemma \ref{main sijie lemma}, we have the following identity
\begin{align*}
&\langle(\mathcal{A}_{n,1+is}f)(\tau,\sigma),(h_I\circ\pi)\otimes(h_J\circ\rho)\rangle\\
=&\sum_{j_1,j_2=1}^n\tau_{j_1}^{\frac{is}{2}}\sigma_{j_2}^{\frac{is}{2}}\langle (P_{j_1}^{\tau}\otimes P_{j_2}^{\sigma})(\mathcal{J}f_{j_1}),(h_I\circ\pi)\otimes(h_J\circ\rho)\rangle\\
=&\sum_{j_1,j_2=1}^n\tau_{j_1}^{\frac{is}{2}}\sigma_{j_2}^{\frac{is}{2}}\langle \mathcal{J}f_{j_1},P_{j_1}^{\tau}(h_I\circ\pi)\otimes P_{j_2}^{\sigma}(h_J\circ\rho)\rangle.
\end{align*}
If $I = \emptyset$ or $J = \emptyset$, then $P_{j_1}^{\tau}(h_I \circ \pi) = 0$ or $P_{j_2}^{\tau}(h_J \circ \rho) = 0$ for all $1 \leq j_1 \leq n$ and $1 \leq j_2 \leq n$, respectively. If $I\in\mathbb{D}_k$ and $J\in\mathbb{D}_l$ with $1\leq k,l\leq n-1$,  then it follows from the preceding display and Lemma \ref{main sijie lemma} that
\begin{align*}
&\langle(\mathcal{A}_{n,1+is}f)(\tau,\sigma),(h_I\circ\pi)\otimes(h_J\circ\rho)\rangle\\
=&\sum_{j_1,j_2=1}^n\delta_{j_1,\pi( k+1)}\delta_{j_2,\rho(l+1)}\tau_{j_1}^{\frac{is}{2}}\sigma_{j_2}^{\frac{is}{2}}\langle \mathcal{J}f_{j_1},(h_I\circ\pi)\otimes (h_J\circ\rho)\rangle\\
=&\tau_{\pi(k+1)}^{\frac{is}{2}}\sigma_{\rho(l+1)}^{\frac{is}{2}}\langle \mathcal{J}f_{\pi(k+1)},(h_I\circ\pi)\otimes (h_J\circ\rho)\rangle.
\end{align*}
The assertion follows now from the obvious equality $\left|\tau^{\frac{is}{2}}_j\right|=\left|\sigma^{\frac{is}{2}}_j\right|=1.$
\end{proof}

In the sequel, for $U\subset\Omega_n\times\Omega_n$, we denote by $|U|$  the cardinality of of $U$. 
\begin{lem}\label{getting rid of U lemma}
	Let $f=(f_1,\cdots,f_n)\in L_{\infty}(\Omega_n;\ell^n_2).$
	 For every $U\subset\Omega_n\times\Omega_n,$ we have
$$\sum_{j=1}^n\|1_U\cdot \mathcal{J}f_j\|_{L_2(\Omega_n\times\Omega_n)}^2\leq\frac{|U|}{2^{2n}}\|f\|_{L_{\infty}(\Omega_n;\ell^n_2)}^2.$$
\end{lem}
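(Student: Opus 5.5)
This is a pointwise statement, so the plan is to swap the sum over $j$ with the integral over $U$ and use the pointwise bound on $\sum_j|f_j|^2$. Here $|U|$ is the cardinality of $U$, and the $L_2(\Omega_n\times\Omega_n)$ norm is taken with respect to the normalized counting measure $\mu_n\otimes\mu_n$, which gives mass $2^{-2n}$ to each point.

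First, expand the left-hand side as an average over $\Omega_n\times\Omega_n$ and use Fubini (the sum over $j$ is finite):
\[
\sum_{j=1}^n\|1_U\cdot\mathcal{J}f_j\|_{L_2(\Omega_n\times\Omega_n)}^2
=\frac{1}{2^{2n}}\sum_{(x,y)\in U}\sum_{j=1}^n|f_j(x_1y_1,\ldots,x_ny_n)|^2 .
\]

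Second, for each $(x,y)$ the point $(x_1y_1,\ldots,x_ny_n)$ lies in $\Omega_n$. Hence the inner sum equals $\|f(xy)\|_{\ell_2^n}^2$, where $xy$ denotes the coordinatewise product, and it is bounded by $\|f\|_{L_\infty(\Omega_n;\ell_2^n)}^2$. On a finite space with full-support uniform measure, the essential supremum is the genuine maximum, so this bound holds at every point.

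Third, summing this bound over the $|U|$ points of $U$ gives $\frac{|U|}{2^{2n}}\|f\|_{L_\infty(\Omega_n;\ell_2^n)}^2$, which is the claim.

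There is no real obstacle. The only point needing care is the normalization convention: $|U|$ is a cardinality, while the $L_2$ norms use normalized measure, and this mismatch is exactly what produces the factor $2^{-2n}$. The inequality is in fact an equality whenever $\|f(\cdot)\|_{\ell_2^n}$ is constant.
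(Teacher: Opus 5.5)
Your proposal is correct and matches the paper's argument. The paper likewise writes the left-hand side as the integral over $U$ of $\sum_j|\mathcal{J}f_j|^2=\mathcal{J}\bigl(\sum_j|f_j|^2\bigr)$ and bounds it by $(\mu_n\otimes\mu_n)(U)\,\|f\|_{L_\infty(\Omega_n;\ell_2^n)}^2=\frac{|U|}{2^{2n}}\|f\|_{L_\infty(\Omega_n;\ell_2^n)}^2$, which is exactly your pointwise evaluation at $xy$ summed over the points of $U$.
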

\begin{proof} We have
$$\sum_{j=1}^n\|1_U\cdot \mathcal{J}f_j\|_{L_2(\Omega_n\times\Omega_n)}^2=\int\int_U(\sum_{j=1}^n|\mathcal{J}f_j|^2)d\mu_n d\mu_n.$$
It is immediate that
$$\sum_{j=1}^n|\mathcal{J}f_j|^2=\mathcal{J}F,\quad F=\sum_{j=1}^n|f_j|^2.$$
Thus,
\begin{align*}
\sum_{j=1}^n\|1_U\cdot \mathcal{J}f_j\|_{L_2(\Omega_n\times\Omega_n)}^2&=\int\int_U(\mathcal{J}F)dd\mu_n d\mu_n\\
&\leq  (\mu_n\otimes \mu_n)(U)\|\mathcal{J}F\|_{L_{\infty}(\Omega_n\times\Omega_n)}\\
&=\frac{|U|}{2^{2n}}\|F\|_{L_{\infty}(\Omega_n)}=\frac{|U|}{2^{2n}}\|f\|_{L_{\infty}(\Omega_n;\ell^n_2)}^2.
\end{align*}
\end{proof}

\begin{thm}\label{thm:right}
Fix permutations $\pi$, $\rho\in\mathfrak{S}_n$ and take $\tau,\sigma\in\mathbb{R}^n_+$ such that
$$\tau_{\pi(1)}>\tau_{\pi(2)}>\cdots>\tau_{\pi(n)},\quad \sigma_{\rho(1)}>\sigma_{\rho(2)}>\cdots>\sigma_{\rho(n)}.$$
For  every $f=(f_1,\cdots,f_n)\in L_{\infty}(\Omega_n;\ell^n_2)$, we have
\begin{equation}\label{eq:right-boundary-bmo-bound}
\left\|\left(\mathcal{A}_{n,z}f(\tau,\sigma)\right)\circ(\pi^{-1}\times\rho^{-1})\right\|_{\BMO(\Omega_n\times\Omega_n)}
\leq\|f\|_{L_{\infty}(\Omega_n;\ell^{n}_{2})},\quad \Re(z)=1.
\end{equation}
\end{thm}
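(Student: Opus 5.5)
Write $z=1+is$ and $G=(\mathcal{A}_{n,z}f)(\tau,\sigma)\circ(\pi^{-1}\times\rho^{-1})$. The plan is to compute the product BMO norm of $G$ directly from its definition, as a supremum over $U$ of Haar-coefficient sums. We then use Lemma \ref{selector lemma} to bound each Haar coefficient by that of a single $\mathcal{J}f_j$, and finally Lemma \ref{getting rid of U lemma} to control the resulting square sum.

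\textbf{Step 1: rewrite the Haar coefficients.} Composition with $\pi\times\rho$ is a measure-preserving change of variables, so
\[
\langle G,h_I\otimes h_J\rangle=\langle(\mathcal{A}_{n,z}f)(\tau,\sigma),(h_I\circ\pi)\otimes(h_J\circ\rho)\rangle
\]
(with the convention on how $\circ\pi^{-1}$ acts chosen consistently). The proof of Lemma \ref{selector lemma} gives more than the stated max bound. For $I=\emptyset$ or $J=\emptyset$ the coefficient vanishes. For $I\in\mathbb{D}_k$, $J\in\mathbb{D}_l$ it equals a unimodular factor times $\langle\mathcal{J}f_{j(I)},(h_I\circ\pi)\otimes(h_J\circ\rho)\rangle$, where $j(I)=\pi(k+1)$ depends only on the level of $I$.

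\textbf{Step 2: sum over rectangles inside $U$.} Fix $U\subset\Omega_n\times\Omega_n$ and write $U'$ for its image under the relabelling, so that $R\subset U$ corresponds to $(\pi\times\rho)$-transformed rectangles inside $U'$ with $|U'|=|U|$. Then
\[
\sum_{R=I\times J\subset U}|\langle G,h_R\rangle|^2\le\sum_{j=1}^n\sum_{\substack{I\times J\subset U\\ j(I)=j}}|\langle \mathcal{J}f_j\circ(\pi^{-1}\times\rho^{-1}),h_I\otimes h_J\rangle|^2 .
\]
For each fixed $j$, the rectangles $I\times J\subset U$ form part of the orthonormal system $\{h_R\}$ of Lemma \ref{orthonormal basis lemma}. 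Each $h_R$ with $R\subset U$ is supported in $U$, so $\langle g,h_R\rangle=\langle \1_U g,h_R\rangle$. By Bessel's inequality the inner sum is therefore at most $\|\1_{U}\cdot\mathcal{J}f_j\circ(\pi^{-1}\times\rho^{-1})\|_{L_2}^2=\|\1_{U'}\mathcal{J}f_j\|_{L_2}^2$.

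\textbf{Step 3: conclude.} Summing over $j$ and applying Lemma \ref{getting rid of U lemma} gives the bound $(\mu_n\otimes\mu_n)(U)\|f\|_{L_\infty(\ell_2^n)}^2$. Dividing by $|U|$ (understood as the measure in the BMO definition; the two normalizations agree up to the factor $2^{-2n}$ that Lemma \ref{getting rid of U lemma} already accounts for) and taking the supremum over $U$ yields \eqref{eq:right-boundary-bmo-bound}.

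\textbf{Main obstacle.} The crucial point is Step 2: splitting rectangles by the selected index $j(I)$. This replaces the max in Lemma \ref{selector lemma} by an honest decomposition, so that each rectangle is charged to exactly one $f_j$. Without this splitting, bounding by a max would lose the $\ell_2$ structure. The remaining bookkeeping is verifying that rectangles in $U$ transform to rectangles in $U'$ under the permutation relabelling, together with the measure normalization; both appear routine.
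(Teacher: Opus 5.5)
Your proof is correct and follows the paper's argument. You bound the Haar coefficients of the relabelled function via Lemma~\ref{selector lemma}, apply Bessel's inequality to $\1_{U'}\mathcal{J}f_j$ for each $j$, and finish with Lemma~\ref{getting rid of U lemma}. The only difference is that the paper does not split rectangles by the selected index $j(I)$. It simply uses $\max_j|c_j|^2\le\sum_j|c_j|^2$ and applies Bessel to all rectangles in $U$ for each $j$, which loses nothing. So the splitting you flag as the main obstacle is harmless but not needed.
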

\begin{proof} 
	As  $\Re(z)=1$, we may write $z=1+is$, $s\in\mathbb{R}$.  By Lemma \ref{selector lemma}, 
$$|\langle(\mathcal{A}_{n,1+is}f)(\tau,\sigma),(h_I\circ\pi)\otimes(h_J\circ\rho)\rangle|^2\leq\sum_{j=1}^n|\langle \mathcal{J}f_j,(h_I\circ\pi)\otimes(h_J\circ\rho)\rangle|^2.$$

Fix a nonempty subset $U\subset \Omega_n\times\Omega_n.$ We have
\begin{align*}
&\sum_{\substack{I,J\in\mathcal{D}_n\\ I\times J \subset U}}|\langle(\mathcal{A}_{n,1+is}f)(\tau,\sigma),(h_I\circ\pi)\otimes(h_J\circ\rho)\rangle|^2\\
\leq&\sum_{j=1}^n\sum_{\substack{I,J\in\mathcal{D}_n\\ I\times J \subset U}}|\langle \mathcal{J}f_j,(h_I\circ\pi)\otimes(h_J\circ\rho)\rangle|^2\\
=&\sum_{j=1}^n\sum_{\substack{I,J\in\mathcal{D}_n\\ I\times J \subset U}}|\langle(\1_U\circ(\pi\times\rho))\cdot \mathcal{J}f_j,(h_I\circ\pi)\otimes(h_J\circ\rho)\rangle|^2\\
=&\sum_{j=1}^n\sum_{I,J\in{\mathcal{D}}_n}|\langle(\1_U\circ(\pi\times\rho))\cdot \mathcal{J}f_j,(h_I\circ\pi)\otimes(h_J\circ\rho)\rangle|^2\\
=&\sum_{j=1}^n\|(\1_U\circ(\pi\times\rho))\cdot \mathcal{J}f_j\|_{L_2(\Omega_n\times\Omega_n)}^2.
\end{align*}
Applying Lemma \ref{getting rid of U lemma} to the set $(\pi^{-1}\times\rho^{-1})(U),$ we write
$$\sum_{j=1}^n\|(\1_U\circ(\pi\times\rho))\cdot \mathcal{J}f_j\|_{L_2(\Omega_n\times\Omega_n)}^2\leq\frac{|U|}{2^{2n}}\|h\|_{L_{\infty}(\Omega_n;\ell^n_2)}^2.$$
Thus,
$$\sum_{\substack{I,J\in\mathcal{D}_n\\ I\times J \subset U}}|\langle(\mathcal{A}_{n,1+is}f)(\tau,\sigma),(h_I\circ\pi)\otimes(h_J\circ\rho)\rangle|^2\leq\frac{|U|}{2^{2n}}\|f\|_{L_{\infty}(\Omega_n;\ell^n_2)}^2.$$
Taking the supremum over $U$, we complete the proof.
\end{proof}

\section{Boundedness of $\mathcal{A}_{n,1-\frac1q}$ on $L_{q}({\rm Clock}_{n}\times\Omega_{n}\times \Omega_{n})$}\label{sec:regularization}

Based on the results obtained in the previous section, we interpolate the results into the $L_q$ space. The following estimate is the main result of this section.
\begin{thm}\label{key theorem} For $2\leq q<\infty,$ we have
$$\|\mathcal{A}_{n,1-\frac1q}h\|_{L_q({\rm Clock}_n\times\Omega_n\times\Omega_n)}
 \leq c_{{\rm abs}}q^2\|h\|_{L_q(\Omega_n;\ell_2^n)}.$$
\end{thm}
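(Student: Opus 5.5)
The plan is to run Stein interpolation (Theorem \ref{standard complex interpolation theorem}) for the analytic family $z\mapsto \mathcal{A}_{n,z}$ on the strip $\{\tfrac12\le\Re(z)\le1\}$, reparametrized as $\zeta=2z-1\in\{0\le\Re\zeta\le1\}$. The point $z=1-\frac1q$ corresponds to $\zeta=1-\frac2q=\theta$. The endpoints are as follows.

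\emph{Left endpoint.} Theorem \ref{thm:left} gives $\mathcal{A}_{n,z}:L_2(\Omega_n;\ell_2^n)\to L_2({\rm Clock}_n;L_2(\Omega_n\times\Omega_n))$ with norm at most $\pi^{1/2}$.

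\emph{Right endpoint.} Theorem \ref{thm:right} is only a BMO bound after composing with the coordinate permutation $\pi^{-1}\times\rho^{-1}$ determined by the ordering of $(\tau,\sigma)$. So first I would partition ${\rm Clock}_n$ (up to a null set) into the $(n!)^2$ open cones $C_{\pi,\rho}=\{\tau_{\pi(1)}>\dots>\tau_{\pi(n)},\ \sigma_{\rho(1)}>\dots>\sigma_{\rho(n)}\}$. On $C_{\pi,\rho}$ I would define the modified operator
\[
\widetilde{\mathcal A}_{n,z}h(\tau,\sigma)=(\mathcal{A}_{n,z}h(\tau,\sigma))\circ(\pi^{-1}\times\rho^{-1}).
\]
Since composition with a coordinate permutation is an isometry of every $L_r(\Omega_n\times\Omega_n)$, $L_q$ norms of $\mathcal{A}$ and $\widetilde{\mathcal A}$ coincide pointwise in $(\tau,\sigma)$. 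The family $\widetilde{\mathcal A}_{n,z}$ is still analytic in $z$, because the permutation depends only on $(\tau,\sigma)$, not on $z$. With this:
\begin{itemize}
\item on $\Re z=\tfrac12$, $\widetilde{\mathcal A}_{n,z}:L_2(\Omega_n;\ell_2^n)\to L_2({\rm Clock}_n;L_2(\Omega_n\times\Omega_n))$ with norm $\le\pi^{1/2}$;
\item on $\Re z=1$, $\widetilde{\mathcal A}_{n,z}:L_\infty(\Omega_n;\ell_2^n)\to L_\infty({\rm Clock}_n;\BMO(\Omega_n\times\Omega_n))$ with norm $\le1$, by Theorem \ref{thm:right}.
\end{itemize}

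\emph{Interpolating the spaces.} On the domain side, $[L_2(\Omega_n;\ell_2^n),L_\infty(\Omega_n;\ell_2^n)]_\theta=L_q(\Omega_n;\ell_2^n)$ by Theorem \ref{thm:bochner} with $E_0=E_1=\ell_2^n$. On the target side, Theorem \ref{thm:bochner} with $S={\rm Clock}_n$, $E_0=L_2(\Omega_n\times\Omega_n)$, $E_1=\BMO$ gives
\[
L_q\bigl({\rm Clock}_n;[L_2,\BMO]_\theta\bigr).
\]
Theorem \ref{thm:product-interpolation} then bounds the inner norm by $c_{\rm abs}q^2$ times the $L_q(\Omega_n\times\Omega_n)$ norm. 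Combining these yields $\|\widetilde{\mathcal A}_{n,1-1/q}h\|_{L_q}\le c_{\rm abs}q^2\max\{\pi^{1/2},1\}\|h\|_{L_q}$. Undoing the permutations then gives the theorem.

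\emph{Main obstacle: hypotheses of Theorem \ref{standard complex interpolation theorem}.} Everything is finite-dimensional in $\Omega_n$, so $\BMO$ and $L_2$ norms on $\Omega_n\times\Omega_n$ are equivalent with $n$-dependent constants. This is harmless for continuity but must not enter the final constant. The genuine issue is on the clock side, where the factors $\tau_j^{(z-1)/2}$ are singular. For $\Re z\ge\frac12$ we have $|\tau_j^{(z-1)/2}|\le\tau_j^{-1/4}$, which is integrable to power $4-\epsilon$ but not uniformly in $L_\infty$ or in $L_q$ for large $q$. So continuity into $Y_0\cap Y_1$ on the closed strip, and membership of $\widetilde{\mathcal A}_{n,z}h$ in $L_\infty({\rm Clock};\BMO)$ for $\Re z<1$, need care.

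I would first try a localization: restrict to the clock region $\{\tau_j,\sigma_k\ge\delta\}$ or truncate $\tau\mapsto\max(\tau,\delta)$ inside the powers, so that all functions are bounded and $z\mapsto\widetilde{\mathcal A}^{\delta}_{n,z}h$ is continuous and bounded on the closed strip and analytic inside. The truncation must preserve both endpoint estimates. The $\BMO$ bound still holds, since it only used $|\tau^{is/2}|=1$, and truncation keeps unimodularity on $\Re z=1$. The $L_2$ bound only improves, since $|\max(\tau,\delta)^{-1/4}|\le\tau^{-1/4}$. I would then let $\delta\to0$ via monotone convergence in the $L_q$ norm. If the interpolation theorem also requires finite total mass or bounded growth in $\Im z$, I would multiply by the standard factor $e^{\varepsilon(z-z_0)^2}$ and let $\varepsilon\to0$ at the end.
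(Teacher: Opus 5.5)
Your plan is essentially the paper's proof. Your $\widetilde{\mathcal A}_{n,z}$ is exactly the paper's $\mathbf U\mathcal A_{n,z}$, and the endpoint bounds come from Theorems~\ref{thm:left} and~\ref{thm:right}. The target space is identified through Theorem~\ref{thm:bochner} and Theorem~\ref{thm:product-interpolation}. The hypotheses of Theorem~\ref{standard complex interpolation theorem} are secured by a cutoff in the clock variables, removed at the end by monotone convergence. The one step that fails as written is your choice of cutoff.

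\textbf{The cutoff.} Restricting to $\{\tau_j,\sigma_k\ge\delta\}$, or replacing $\tau$ by $\max(\tau,\delta)$ in the powers, handles the singularity at $0$ but not large clock values. On the line $\Re z=1$, large clock values destroy continuity into $Y_1=L_\infty(\mathrm{Clock}_n;\BMO(\Omega_n\times\Omega_n))$. That continuity is a hypothesis of Theorem~\ref{standard complex interpolation theorem}, and your proposal asserts it. Already for $n=1$ and $h=w_{\{1\}}$ you have $(\mathcal A_{1,1+is}h)(\tau,\sigma)=(\tau\sigma)^{is/2}\,w_{\{1\}}\otimes w_{\{1\}}$. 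Since $\log(\tau\sigma)$ is unbounded on $\{\tau,\sigma\ge\delta\}$,
\[
\operatorname*{ess\,sup}_{\tau,\sigma\geq\delta}\bigl|(\tau\sigma)^{is/2}-(\tau\sigma)^{is'/2}\bigr|=2\qquad (s\neq s').
\]
So for this $h$ the map $s\mapsto\mathcal A_{1,1+is}h$ is discontinuous at every point as a $Y_1$-valued map, even after your cutoff.

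\textbf{The fix.} Use the paper's two-sided localization $E_\epsilon=\{\epsilon\le\tau_j,\sigma_k\le\epsilon^{-1}\}$. The operator $M_{\1_{E_\epsilon}}$ commutes with $\mathbf U$ and is a contraction on both $Y_0$ and $Y_1$, so both endpoint bounds survive. On $E_\epsilon$ three facts combine: the projections $P^\tau_j\otimes P^\sigma_k$ are orthogonal; the embedding $L_2(\Omega_n\times\Omega_n)\hookrightarrow\BMO(\Omega_n\times\Omega_n)$ holds with an $n$-dependent but harmless constant; and
\[
\sup_{a\in[\epsilon,\epsilon^{-1}]}\bigl|a^{\frac{z-1}{2}}-a^{\frac{w-1}{2}}\bigr|\longrightarrow 0\qquad (w\to z),
\]
uniformly on the strip. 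Together these give $Y_0\cap Y_1$-continuity and boundedness on the closed strip, so your extra factor $e^{\varepsilon(z-z_0)^2}$ becomes unnecessary. Since $E_\epsilon$ increases to a full-measure set, monotone convergence removes the cutoff as you intended, and the rest of your argument goes through unchanged.
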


In order to apply the interpolation theory to obtain strong boundedness of $\mathcal{A}_{n,z},$ we first demonstrate the analyticity of the mapping $z\mapsto \mathcal{A}_{n,z}$ for $\Re(z)>0.$ Since $\nu^{\otimes}\otimes \nu^{\otimes n}$ is absolutely continuous with respect to the Lebesgue measure, it follows that for almost every $\tau$, $\sigma\in \mathbb{R}^{n}_{+}$ we have $\tau_{i}\neq \tau_{j}$ and $\sigma_{i}\neq \sigma_{j}$ for all $1\leq i,j\leq n.$ Thus, for almost every $\tau$, $\sigma\in \mathbb{R}^{n}_{+}$ we define the mapping $\mathbf{U}$ by setting
$$
(\mathbf{U}F)(\tau,\sigma)=F(\tau,\sigma)\circ(\pi^{-1}\times\rho^{-1}),
$$
where the permutations $\pi$, $\rho\in \mathfrak{S}_{n}$ are chosen by
\begin{equation}\label{permutation law}
\tau_{\pi(1)}>\tau_{\pi(2)}>\cdots >\tau_{\pi(n)},\quad \sigma_{\rho(1)}>\sigma_{\rho(2)}>\cdots>\sigma_{\rho(n)}.
\end{equation}

\noindent For $\epsilon\in(0,1),$ let
\begin{equation*}
 E_{\epsilon}=\left\{(\tau,\sigma)\in \mathbb{R}^{n}_{+}\times \mathbb{R}^{n}_{+}: \epsilon\leq\tau_j,\sigma_k\leq\epsilon^{-1}\right\}
\end{equation*}
Let $M_f$ denote the multiplication operator defined by $M_fg=fg.$

\begin{lem}\label{final analyticity lemma} For every $f\in L_2(\Omega_n;\ell^{n}_{2}),$ the family
$$
 z\longmapsto M_{\1_{E_{\epsilon}}}\mathbf{U}\mathcal{A}_{n,z}f
$$
is $L_2({\rm Clock}_n;L_2(\Omega_n\times\Omega_n))$-analytic on the half-plane $\{\Re(z)>0\}.$ 
\end{lem}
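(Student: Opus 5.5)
The plan is to write $M_{\1_{E_\epsilon}}\mathbf{U}\mathcal{A}_{n,z}f$ as a finite sum of scalar holomorphic functions multiplied by fixed vectors of $L_2({\rm Clock}_n;L_2(\Omega_n\times\Omega_n))$. Expanding each $f_j$ in the Walsh basis, as in the proof of Theorem \ref{thm:left}, gives
\[
(\mathcal{A}_{n,z}f)(\tau,\sigma)=\sum_{j,k=1}^n\sum_{A\subset[n]}\tau_j^{\frac{z-1}{2}}\sigma_k^{\frac{z-1}{2}}\,\1_A(j)\1_A(k)\1_{V_{A,j}}(\tau)\1_{V_{A,k}}(\sigma)\,\widehat{f_j}(A)\,(w_A\otimes w_A).
\]
The operator $\mathbf{U}$ is linear pointwise in $(\tau,\sigma)$. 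On the open set of clock points with distinct coordinates, the permutations $\pi,\rho$ are locally constant, so
\[
M_{\1_{E_\epsilon}}\mathbf{U}\mathcal{A}_{n,z}f=\sum_{j,k,A}\tau_j^{\frac{z-1}{2}}\sigma_k^{\frac{z-1}{2}}\,\Phi_{j,k,A},
\]
where
\[
\Phi_{j,k,A}(\tau,\sigma)=\1_{E_\epsilon}(\tau,\sigma)\1_A(j)\1_A(k)\1_{V_{A,j}}(\tau)\1_{V_{A,k}}(\sigma)\widehat{f_j}(A)\,(w_A\otimes w_A)\circ(\pi^{-1}\times\rho^{-1}).
\]
Each $\Phi_{j,k,A}$ does not depend on $z$. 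It is measurable, because it is piecewise constant on the finitely many Weyl chambers determined by the orderings of $\tau$ and $\sigma$. It is also bounded, since $|\widehat{f_j}(A)|\le\|f_j\|_{L_2}$ and $w_A\otimes w_A$ is unimodular. The index set is finite ($n^2 2^n$ terms), so it suffices to show that each single term $z\mapsto g_z\Phi$ is analytic, where $g_z(\tau,\sigma)=\tau_j^{\frac{z-1}{2}}\sigma_k^{\frac{z-1}{2}}$.

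For a single term, the localization to $E_\epsilon$ does the work. On $E_\epsilon$ we have $|\log\tau_j|,|\log\sigma_k|\le\log\epsilon^{-1}$. Hence $g_z=\exp\bigl(\tfrac{z-1}{2}(\log\tau_j+\log\sigma_k)\bigr)$ is given there by a power series in $z$ centered at any $z_0$:
\[
g_z=g_{z_0}\sum_{m\ge0}\frac{(z-z_0)^m}{m!}\Bigl(\tfrac{\log\tau_j+\log\sigma_k}{2}\Bigr)^m .
\]
This series converges uniformly on $E_\epsilon$ for all $z\in\mathbb{C}$, with the $m$-th coefficient bounded by $\sup_{E_\epsilon}|g_{z_0}|\,(\log\epsilon^{-1})^m/m!$. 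Multiplying by the bounded vector $\Phi$ and using $\|g\Phi\|_{L_2}\le\|g\|_{L_\infty(E_\epsilon)}\|\Phi\|_{L_2}$, the series converges in $L_2({\rm Clock}_n;L_2(\Omega_n\times\Omega_n))$. Therefore $z\mapsto g_z\Phi$ is given locally by a norm-convergent power series, and is analytic. This in fact gives analyticity on all of $\mathbb{C}$, which includes the half-plane $\{\Re(z)>0\}$.

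An alternative route is to verify complex differentiability through dominated convergence. The difference quotient $(g_{z+h}-g_z)/h$ converges pointwise to $g_z\cdot\tfrac12(\log\tau_j+\log\sigma_k)$. On $E_\epsilon$, for $|h|\le1$, it is dominated by a constant depending only on $\epsilon$ and $z$.

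The main obstacle is only bookkeeping: making sure that $\mathbf{U}$, which is defined only almost everywhere and depends on $(\tau,\sigma)$ through the permutations, does not break measurability or the factorization. This is handled by partitioning the clock space, up to a null set, into the $(n!)^2$ regions on which $\pi$ and $\rho$ are fixed. On each region, $\mathbf{U}$ is a fixed isometric permutation of coordinates on $L_2(\Omega_n\times\Omega_n)$. Note that without $\1_{E_\epsilon}$ the argument would fail, because $\tau_j^{\frac{z-1}{2}}$ is not bounded as $\tau_j\to0$ or $\tau_j\to\infty$. This is the reason for the localization.
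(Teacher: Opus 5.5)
Your proof is correct, but it takes a different route from the paper's.

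\textbf{The paper's route.} The paper discards the cutoff at the outset. Since $M_{\1_{E_\epsilon}}\mathbf{U}$ is bounded on $L_2({\rm Clock}_n;L_2(\Omega_n\times\Omega_n))$, it suffices to prove analyticity of the unlocalized family $z\mapsto\mathcal{A}_{n,z}f$ on $\{\Re(z)>0\}$. The paper computes the pointwise $z$-derivative
\[
\tfrac12\sum_{j,k}(\tau_j\sigma_k)^{\frac{z-1}{2}}\log(\tau_j\sigma_k)\,(P_j^\tau\otimes P_k^\sigma)(\mathcal{J}f_j).
\]
It then uses the pairwise orthogonality of the projections $P_j^\tau\otimes P_k^\sigma$ to bound the squared norm of this derivative by $n\|f\|_{L_2(\Omega_n;\ell_2^n)}^2(\Gamma^2)''(\Re(z))$. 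The hypothesis $\Re(z)>0$ is exactly what makes $(uv)^{\Re(z)-1}\log^2(uv)e^{-u-v}$ integrable near the origin.

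\textbf{Your route.} You keep $\1_{E_\epsilon}$, on which the weights $\tau_j^{\frac{z-1}{2}}\sigma_k^{\frac{z-1}{2}}$ and all their $z$-derivatives are uniformly bounded. You split into $n^2 2^n$ terms with $z$-independent bounded vectors $\Phi_{j,k,A}$ and expand each weight in a norm-convergent power series.

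\textbf{Comparison.} Your argument is more elementary. It needs no Gamma integrals and yields entire analyticity. It handles the a.e.-defined, permutation-dependent $\mathbf{U}$ explicitly via the Weyl chambers. It also avoids the step the paper leaves implicit, namely passing from ``the derivative lies in $L_2$, with a locally bounded estimate'' to norm convergence of difference quotients. The same uniform bounds on $E_\epsilon$ would essentially give Lemma \ref{final continuity lemma} as well.

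The paper's route, in exchange, proves a cutoff-free fact: $\mathcal{A}_{n,z}f$ is already $L_2$-analytic on the half-plane. So, contrary to your closing remark, the localization is not what makes analyticity possible. In the paper it is needed for the $L_\infty({\rm Clock}_n;\BMO(\Omega_n\times\Omega_n))$-continuity, where the unbounded weights as $\tau_j\to0$ or $\tau_j\to\infty$ would destroy the supremum over the clock variable. This interpretive point does not affect the validity of your proof.
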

\begin{proof}
By definition the mapping $M_{\1_{E_{\epsilon}}}\mathbf{U}$ is bounded from the vector-valued space $L_2({\rm Clock}_n;L_2(\Omega_n\times\Omega_n))$ to itself. Therefore, it now remains to establish analyticity of the map
$z\to \mathcal{A}_{n,z}f$ on the half-plane $\{\Re(z)>0\}$. It suffices to prove that derivative $z\to\frac{d}{dz}\left(\mathcal{A}_{n,z}f\right)$ belongs to $L_2({\rm Clock}_n;L_2(\Omega_n\times\Omega_n))$ on the half-plane $\{\Re(z)>0\}.$ 

For almost every $\tau,\sigma\in \mathbb{R}^n_+$ we have
$$\left(\frac{d}{dz}\mathcal{A}_{n,z}f\right)(\tau,\sigma)=\frac12\sum_{j,k=1}^n(\tau_j\sigma_k)^{\frac{z-1}{2}}\log(\tau_j\sigma_k)(P_j^{\tau}\otimes P_k^{\sigma})(\mathcal{J}f_j).$$
Thus,
\begin{equation}\label{Hilbert expansion 1}
\begin{split}
&4\left\|\frac{d}{dz}\mathcal{A}_{n,z}f\right\|_{L_2({\rm Clock}_n;L_2(\Omega_n\times\Omega_n))}^2\\
=&\int_{\mathbb{R}^{n}_{+}\times \mathbb{R}^{n}_{+}}\left\|\sum_{j,k=1}^n(\tau_j\sigma_k)^{\frac{z-1}{2}}\log(\tau_j\sigma_k)(P_j^{\tau}\otimes P_k^{\sigma})(\mathcal{J}f_j)\right\|_{L_2(\Omega_n\times\Omega_n)}^{2}\cdot e^{-\sum_{m=1}^n\tau_m+\sigma_m}d\tau d\sigma.
\end{split}
\end{equation}
Since, for every $\tau,\sigma\in\mathbb{R}^n_+,$ the family $\{P^{\tau}_j\otimes P^{\sigma}_k\}_{j,k=1}^n$ consists of pairwise orthogonal projections, it follows that
\begin{equation}\label{Hilbert expansion 2}
\begin{split}
&\left\|\sum_{j,k=1}^n(\tau_j\sigma_k)^{\frac{z-1}{2}}\log(\tau_j\sigma_k)(P_j^{\tau}\otimes P_k^{\sigma})(\mathcal{J}f_j)\right\|_{L_2(\Omega_n\times\Omega_n)}^{2}\\
=&\sum_{j,k=1}^n(\tau_j\sigma_k)^{\Re(z)-1}\log^2(\tau_j\sigma_k)\|(P_j^{\tau}\otimes P_k^{\sigma})(\mathcal{J}f_j)\|_{L_2(\Omega_n\times\Omega_n)}^2\\
\leq &\sum_{j,k=1}^n(\tau_j\sigma_k)^{\Re(z)-1}\log^2(\tau_j\sigma_k)\|\mathcal{J}f_j\|_{L_2(\Omega_n\times\Omega_n)}^2\\
=&\sum_{j,k=1}^n(\tau_j\sigma_k)^{\Re(z)-1}\log^2(\tau_j\sigma_k)\|f_j\|_{L_2(\Omega_n)}^{2}.
\end{split}
\end{equation}
Hence, combining \eqref{Hilbert expansion 1} with \eqref{Hilbert expansion 2}, we have
\begin{align*}
&4\left\|\frac{d}{dz}\left(\mathcal{A}_{n,z}f\right)\right\|_{L_2({\rm Clock}_n;L_2(\Omega_{n}\times\Omega_{n}))}^2\\
\leq &\sum_{j,k=1}^n\|f_j\|_{L_2(\Omega_n)}^{2} \int_{\mathbb{R}^{n}_{+}\times \mathbb{R}^{n}_{+}}(\tau_j\sigma_k)^{\Re(z)-1}\log^2(\tau_j\sigma_k)e^{-\sum_{m=1}^n\tau_m+\sigma_m}d\tau d\sigma\\
=&\sum_{j,k=1}^n\|f_j\|_{L_2(\Omega_n)}^{2}\int_{\mathbb{R}^{n}_{+}\times \mathbb{R}^{n}_{+}}(uv)^{\Re(z)-1}\log^2(uv)e^{-u-v}dudv\\
=&n\sum_{j=1}^n\|f_j\|_{L_2(\Omega_n)}^{2}\times(\Gamma^2)''(\Re(z))<\infty.
\end{align*}
Hence, we obtain the analyticity of the family.
\end{proof}

The next lemma establishes the continuity of the mapping $z\mapsto M_{\1_{E_{\varepsilon}}}\mathbf{U}\mathcal{A}_{n,z}$ for all $z\in \mathbb{C}$, which is the key ingredient to apply the Stein interpolation theorem.
\begin{lem}\label{final continuity lemma} For every $h\in L_2(\Omega_n,\ell^{n}_{2}),$ the family
$$
 z\longmapsto M_{\1_{E_{\epsilon}}}\mathbf{U}\mathcal{A}_{n,z}h
$$
is $L_{\infty}({\rm Clock}_n;\BMO(\Omega_n\times\Omega_n))$-continuous on $\mathbb{C}.$ More precisely,
\begin{align*}
&\|M_{\1_{E_{\epsilon}}}\mathbf{U}\mathcal{A}_{n,z}h-M_{\mathbf{1}_{E_{\epsilon}}}\mathbf{U}\mathcal{A}_{n,w}h\|_{L_{\infty}({\rm Clock}_n;\BMO(\Omega_n\times\Omega_n))}\\
\leq & n\|{\rm id}\|_{L_2(\Omega_n\times\Omega_n)\to\BMO(\Omega_n\times\Omega_n)}\cdot \sup_{a\in[\epsilon^{-1},\epsilon]}|a^{z-1}-a^{w-1}|\cdot\|h\|_{L_2(\Omega_n,\ell^{n}_{2})}.
\end{align*}
\end{lem}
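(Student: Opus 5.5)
Fix $(\tau,\sigma)\in E_{\epsilon}$, with $\tau$ and $\sigma$ having pairwise distinct coordinates; the remaining points form a null set. Choose $\pi,\rho$ by \eqref{permutation law}. Since $\mathbf{U}$ only composes with the fixed permutations $\pi^{-1}\times\rho^{-1}$, it is linear. Hence the difference to be estimated at $(\tau,\sigma)$ is
\[
\Bigl(\sum_{j,k=1}^n\bigl((\tau_j\sigma_k)^{\frac{z-1}{2}}-(\tau_j\sigma_k)^{\frac{w-1}{2}}\bigr)(P_j^{\tau}\otimes P_k^{\sigma})(\mathcal{J}h_j)\Bigr)\circ(\pi^{-1}\times\rho^{-1}).
\]
The plan is to bound its $\BMO$ norm by the identity embedding $L_2\to\BMO$ and then work purely in $L_2$. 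The norm $\|{\rm id}\|_{L_2(\Omega_n\times\Omega_n)\to\BMO(\Omega_n\times\Omega_n)}$ is finite, possibly depending on $n$, because the space is finite-dimensional; from the definition, $\|g\|_{\BMO}\le \sup_U |U|^{-1/2}\|g\|_{L_2}$. Composition with a permutation of coordinates preserves the $L_2(\Omega_n\times\Omega_n)$ norm, so $\pi,\rho$ drop out after the embedding.

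Next I bound the $L_2$ norm of the sum. I use the triangle inequality over $j$ and $k$ rather than orthogonality, which explains the factor $n$. The operators $P_j^{\tau}\otimes P_k^{\sigma}$ are orthogonal projections, so $\|(P_j^\tau\otimes P_k^\sigma)(\mathcal{J}h_j)\|_{L_2}\le\|\mathcal{J}h_j\|_{L_2}=\|h_j\|_{L_2}$ by Lemma \ref{lem:J}. The scalar coefficient is controlled by $|a^{\frac{z-1}{2}}-a^{\frac{w-1}{2}}|$ with $a=\tau_j\sigma_k$, and $a$ ranges over $[\epsilon^2,\epsilon^{-2}]$ on $E_\epsilon$. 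With the statement's convention, the supremum over $a$ (written with the exponent $z-1$, after substituting $a^{1/2}\in[\epsilon,\epsilon^{-1}]$) is the quantity appearing on the right; I read the interval $[\epsilon^{-1},\epsilon]$ as $[\epsilon,\epsilon^{-1}]$.

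Summing gives
\[
\sum_{j,k}\sup_a|\cdots|\,\|h_j\|_{L_2}\le n\sup_a|\cdots|\sum_j\|h_j\|_{L_2}.
\]
To match the claimed factor $n$ exactly, rather than $n^{3/2}$, I would instead use the orthogonality of the ranges in $k$ for each fixed $j$. Because $\sum_k P_k^\sigma\le{\rm id}$, this gives
\[
\Bigl\|\sum_k c_{jk}(P_j^\tau\otimes P_k^\sigma)\mathcal{J}h_j\Bigr\|_{L_2}\le\max_k|c_{jk}|\,\|h_j\|_{L_2}.
\]
I then apply Cauchy–Schwarz over $j$, obtaining $\sum_j\|h_j\|_{L_2}\le n^{1/2}\|h\|_{L_2(\Omega_n;\ell_2^n)}\le n\|h\|_{L_2(\Omega_n;\ell_2^n)}$. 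Taking the essential supremum over $(\tau,\sigma)$ yields the stated inequality; outside $E_\epsilon$ the function vanishes.

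Finally, continuity on all of $\mathbb{C}$ follows because $(z,a)\mapsto a^{(z-1)/2}$ is jointly continuous, and hence uniformly continuous for $a$ in a compact subset of $(0,\infty)$, so the supremum tends to $0$ as $w\to z$. The main obstacle is only bookkeeping: matching the exponent convention and interval in the supremum, and justifying measurability and the a.e.\ definition of $\mathbf{U}$. Neither needs any deep input, since all the spaces involved are finite-dimensional in the $\Omega_n$ variables.
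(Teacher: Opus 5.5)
Your proposal is correct and follows the paper's proof. You pass from $\BMO$ to $L_2$ via the identity embedding, drop the permutations by $L_2$-invariance (the paper does this by commuting $\mathbf{U}$ with $M_{\1_{E_{\epsilon}}}$ and using that $\mathbf{U}$ is isometric), and bound the coefficient differences $|b^{z-1}-b^{w-1}|$, $b=(\tau_j\sigma_k)^{1/2}\in[\epsilon,\epsilon^{-1}]$, using orthogonality of the projections $P_j^{\tau}\otimes P_k^{\sigma}$. The paper instead uses orthogonality over all pairs $(j,k)$ and bounds crudely by $n^2\max_j\|h_j\|_{L_2}^2$, whereas you use orthogonality only in $k$ and finish with Cauchy--Schwarz in $j$; both give the stated factor $n$, and your reading of the interval as $[\epsilon,\epsilon^{-1}]$ is the correct one.
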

\begin{proof}
Clearly,
\begin{align*}
&\|M_{\1_{E_{\epsilon}}}\mathbf{U}\mathcal{A}_{n,z}h-M_{\1_{E_{\epsilon}}}\mathbf{U}\mathcal{A}_{n,w}h\|_{L_{\infty}({\rm Clock}_n;\BMO(\Omega_n\times\Omega_n))}\\
\leq & \|M_{\1_{E_{\epsilon}}}\mathbf{U}\mathcal{A}_{n,z}h-M_{\1_{E_{\epsilon}}}\mathbf{U}\mathcal{A}_{n,w}h\|_{L_{\infty}({\rm Clock}_n;L_2(\Omega_n\times\Omega_n))}\cdot\|{\rm id}\|_{L_2(\Omega_n\times\Omega_n)\to\BMO(\Omega_n\times\Omega_n)}.
\end{align*}
Using the equality $M_{\1_{E_{\epsilon}}}\mathbf{U}=\mathbf{U}M_{\1_{E_{\epsilon}}},$ we obtain
\begin{align*}
&\|M_{\1_{E_{\epsilon}}}\mathbf{U}\mathcal{A}_{n,z}h-M_{\1_{E_{\epsilon}}}\mathbf{U}\mathcal{A}_{n,w}h\|_{L_{\infty}({\rm Clock}_n;L_2(\Omega_n\times\Omega_n))}\\
=&\|\mathbf{U}\Big(M_{\1_{E_{\epsilon}}}\mathcal{A}_{n,z}h-M_{\1_{E_{\epsilon}}}\mathcal{A}_{n,w}h\Big)\|_{L_{\infty}({\rm Clock}_n;L_2(\Omega_n\times\Omega_n))}\\
=&\|M_{\1_{E_{\epsilon}}}\mathcal{A}_{n,z}h-M_{\1_{E_{\epsilon}}}\mathcal{A}_{n,w}h\|_{L_{\infty}({\rm Clock}_n;L_2(\Omega_n\times\Omega_n))}\\
=&\|\mathcal{A}_{n,z}h-\mathcal{A}_{n,w}h\|_{L_{\infty}(E_{\epsilon};L_2(\Omega_n\times\Omega_n))}\\
=&\sup_{(\tau,\sigma)\in E_{\epsilon}}\|(\mathcal{A}_{n,z}h-\mathcal{A}_{n,w}h)(\tau,\sigma)\|_{L_2(\Omega_n\times\Omega_n)}.
\end{align*}
For almost every $(\tau,\sigma)\in E_{\epsilon}$, by the pairwise orthogonality of projections $\{P^{\tau}_{j}\otimes P^{\sigma}_{k}\}_{1\leq j,k\leq n}$ we have
\begin{align*}
&\|(\mathcal{A}_{n,z}h-\mathcal{A}_{n,w}h)(\tau,\sigma)\|_{L_2(\Omega_n\times\Omega_n)}^2\\
=&\sum_{1\leq j,k\leq n}|\tau_j^{\frac{z-1}{2}}\sigma_k^{\frac{z-1}{2}}-\tau_j^{\frac{w-1}{2}}\sigma_k^{\frac{z-1}{2}}|^2\cdot \|(P_j^{\tau}\otimes P_k^{\sigma})(\mathcal{J}h_j)\|_{L_2(\Omega_n\times\Omega_n)}^{2}\\
\leq &\sum_{1\leq j,k\leq n}|\tau_j^{\frac{z-1}{2}}\sigma_k^{\frac{z-1}{2}}-\tau_j^{\frac{w-1}{2}}\sigma_k^{\frac{z-1}{2}}|^2\cdot \|\mathcal{J}h_j\|_{L_2(\Omega_n\times\Omega_n)}^2\\
\leq &\sum_{1\leq j,k\leq n}|\tau_j^{\frac{z-1}{2}}\sigma_k^{\frac{z-1}{2}}-\tau_j^{\frac{w-1}{2}}\sigma_k^{\frac{z-1}{2}}|^2\cdot \max_{1\leq j\leq n}\|\mathcal{J}h_j\|_{L_2(\Omega_n\times\Omega_n)}^{2}\\
\leq & n^2\sup_{a\in[\epsilon^{-1},\epsilon]}|a^{z-1}-a^{w-1}|^2\cdot\|h\|_{L_2(\Omega_n,\ell^{n}_{2})}^2.
\end{align*}
This completes our proof.
\end{proof}

The following lemma provides endpoints estimates of the operator $M_{\1_{E_{\varepsilon}}}\mathbf{U}\mathcal{A}_{n,z}$, which immediately follows from Section \ref{endpoint estimate section}.

\begin{lem}\label{final boundedness lemma} 
Keeping notations as above, for each $\epsilon\in (0,1)$, we have
$$\left\|M_{\1_{E_{\epsilon}}}\mathbf{U}\mathcal{A}_{n,z}\right\|_{L_2(\Omega_n,\ell^{n}_{2})\to L_2({\rm Clock}_n;L_2(\Omega_n\times\Omega_n))}\leq\pi^{\frac{1}{2}},\quad \Re(z)=\frac12,$$
and
$$\left\|M_{\1_{E_{\epsilon}}}\mathbf{U}\mathcal{A}_{n,z}\right\|_{L_{\infty}(\Omega_n,\ell^{n}_{2})\to L_{\infty}({\rm Clock}_n;\BMO(\Omega_n\times\Omega_n))}\leq 1,\quad \Re(z)=1.$$
\end{lem}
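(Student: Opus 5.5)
The plan is to deduce both estimates directly from Theorem \ref{thm:left} and Theorem \ref{thm:right}. Two observations do the work. First, multiplication by $\1_{E_\epsilon}$ on the clock variables is a contraction on any Bochner space $L_r({\rm Clock}_n;X)$. Second, $\mathbf{U}$ acts fibrewise, for almost every $(\tau,\sigma)$, by a coordinate permutation $\pi^{-1}\times\rho^{-1}$ of $\Omega_n\times\Omega_n$. Here $\pi,\rho$ are determined by the ordering \eqref{permutation law}, and they are constant on each of the finitely many open cones where the ordering is fixed. So $\mathbf{U}$ is measurable in $(\tau,\sigma)$, being piecewise constant on those cones.

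For the line $\Re(z)=\frac12$, note that a coordinate permutation preserves the uniform measure $\mu_n\otimes\mu_n$. Hence
\[
\|(\mathbf{U}F)(\tau,\sigma)\|_{L_2(\Omega_n\times\Omega_n)}=\|F(\tau,\sigma)\|_{L_2(\Omega_n\times\Omega_n)}
\]
for almost every $(\tau,\sigma)$. Integrating against $\nu^{\otimes n}\otimes\nu^{\otimes n}$ and using $\1_{E_\epsilon}\le 1$ gives
\[
\|M_{\1_{E_\epsilon}}\mathbf{U}\mathcal{A}_{n,z}f\|_{L_2({\rm Clock}_n;L_2)}\le\|\mathcal{A}_{n,z}f\|_{L_2({\rm Clock}_n;L_2)}\le\pi^{1/2}\|f\|_{L_2(\Omega_n;\ell_2^n)},
\]
where the last step is Theorem \ref{thm:left}.

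For the line $\Re(z)=1$, I would take $f\in L_\infty(\Omega_n;\ell_2^n)$. The set of $(\tau,\sigma)$ with a tie $\tau_i=\tau_j$ or $\sigma_i=\sigma_k$ is $\nu^{\otimes n}\otimes\nu^{\otimes n}$-null. Off that set, the permutations $\pi,\rho$ from \eqref{permutation law} satisfy the hypotheses of Theorem \ref{thm:right}, and $(\mathbf{U}\mathcal{A}_{n,z}f)(\tau,\sigma)$ is exactly the function estimated in \eqref{eq:right-boundary-bmo-bound}. So its BMO norm is at most $\|f\|_{L_\infty(\Omega_n;\ell_2^n)}$ at almost every point. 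Taking the essential supremum over the clock space, after multiplying by $\1_{E_\epsilon}\le1$, gives the bound $1$.

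The only point needing care is that these are genuine Bochner-space statements, so strong measurability of $(\tau,\sigma)\mapsto M_{\1_{E_\epsilon}}\mathbf{U}\mathcal{A}_{n,z}f(\tau,\sigma)$ is required. This holds because the target spaces are finite-dimensional. The map is piecewise (on the finitely many ordering cones) a finite sum of measurable scalar functions $\tau_j^{(z-1)/2}\sigma_k^{(z-1)/2}\1_{V_{A,j}}(\tau)\1_{V_{A,k}}(\sigma)$ times fixed vectors. With that checked, both estimates follow, and no real obstacle remains.
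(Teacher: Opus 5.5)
Your proposal is correct and follows the paper's proof. Both bounds come from Theorems \ref{thm:left} and \ref{thm:right}, together with two facts: $\mathbf{U}$ acts fibrewise by a measure-preserving coordinate permutation, and multiplication by $\1_{E_\epsilon}$ is a contraction. Your measurability remark (finite-dimensional fibres, permutations piecewise constant on the ordering cones) supplies a detail the paper leaves implicit.
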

\begin{proof} By Theorems \ref{thm:left} and \ref{thm:right}, it follows that
$$\|\mathcal{A}_{n,z}\|_{L_2(\Omega_n,\ell^{n}_{2})\to L_2({\rm Clock}_n;L_2(\Omega_n\times\Omega_n))}\leq\pi^{\frac12},\quad \Re(z)=\frac{1}{2},$$
and
$$\|\mathbf{U}\mathcal{A}_{n,z}\|_{L_{\infty}(\Omega_n,\ell^{n}_{2})\to L_{\infty}({\rm Clock}_n;\BMO(\Omega_n\times\Omega_n))}\leq 1,\quad \Re(z)=1.$$
The following contractions are obvious
$$\|M_{\1_{E_{\epsilon}}}\mathbf{U}\|_{L_2({\rm Clock}_n;L_2(\Omega_n\times\Omega_n))\circlearrowleft}\leq 1$$
and
$$\|M_{\1_{E_{\epsilon}}}\|_{L_{\infty}({\rm Clock}_n;\BMO(\Omega_n\times\Omega_n))\circlearrowleft}\leq 1.$$
Combining all inequalities in the proof together yields the desired inequalities.
\end{proof}

We need one more interpolation lemma.
Set
$$Y_0=L_2({\rm Clock}_n;L_2(\Omega_n\times\Omega_n)),\quad Y_1=L_{\infty}({\rm Clock}_n;\BMO(\Omega_n\times\Omega_n)).$$

\begin{lem}\label{cor:clock-interpolation}
If $q\geq 2$ and $\theta=1-\frac{2}{q},$ then
\begin{equation}\label{eq:clock-interpolation-embedding}
\|\cdot\|_{L_q({\rm Clock}_n\times \Omega_n\times\Omega_n)}\leq c_{{\rm abs}}q^2\|\cdot\|_{[Y_0,Y_1]_{\theta}}.
\end{equation}
\end{lem}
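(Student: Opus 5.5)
The plan is to reduce the statement to Theorem \ref{thm:product-interpolation} combined with the vector-valued interpolation Theorem \ref{thm:bochner}. First we apply Theorem \ref{thm:bochner} with $S={\rm Clock}_n$, $E_0=L_2(\Omega_n\times\Omega_n)$ and $E_1=\BMO(\Omega_n\times\Omega_n)$. Both spaces live on the finite set $\Omega_n\times\Omega_n$, so they are finite-dimensional and form a compatible couple. This gives isometrically
\[
[Y_0,Y_1]_\theta=L_q\bigl({\rm Clock}_n;[L_2(\Omega_n\times\Omega_n),\BMO(\Omega_n\times\Omega_n)]_\theta\bigr).
\]

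Next, for almost every clock point $(\tau,\sigma)$ we invoke Theorem \ref{thm:product-interpolation} pointwise:
\[
\|F(\tau,\sigma)\|_{L_q(\Omega_n\times\Omega_n)}\leq c_{\rm abs}q^2\|F(\tau,\sigma)\|_{[L_2,\BMO]_\theta}.
\]
We raise this to the power $q$ and integrate over ${\rm Clock}_n$. By Fubini, $L_q({\rm Clock}_n;L_q(\Omega_n\times\Omega_n))=L_q({\rm Clock}_n\times\Omega_n\times\Omega_n)$, so
\[
\|F\|_{L_q({\rm Clock}_n\times\Omega_n\times\Omega_n)}\leq c_{\rm abs}q^2\|F\|_{L_q({\rm Clock}_n;[L_2,\BMO]_\theta)}=c_{\rm abs}q^2\|F\|_{[Y_0,Y_1]_\theta}.
\]
This is exactly \eqref{eq:clock-interpolation-embedding}, with the same constant as in Theorem \ref{thm:product-interpolation}.

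The one point needing care is the applicability of Theorem \ref{thm:bochner}. Its hypotheses need $E_0,E_1$ to be Banach and compatible. Here $\BMO$ is a genuine norm on the Haar coefficients with $R\neq$ trivial. Functions that are constant in one variable are annihilated, since $h_\emptyset\otimes h_J$ is still included in $\mathcal R_n$ because $\emptyset\in\mathcal D_n$. In fact $\BMO$ includes $U=\Omega_n\times\Omega_n$, so it dominates a multiple of the $L_2$ norm. Hence $E_0$ and $E_1$ are the same finite-dimensional vector space with equivalent norms, and compatibility is trivial. Measurability issues in the Bochner spaces also disappear because the target is finite-dimensional.

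No genuine obstacle is expected. The whole content, namely the $q^2$ growth, is carried by Theorem \ref{thm:product-interpolation}, which is assumed.
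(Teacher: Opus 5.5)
Your proof is correct and is essentially the paper's argument: Theorem \ref{thm:bochner} identifies $[Y_0,Y_1]_\theta$ isometrically with $L_q({\rm Clock}_n;[L_2(\Omega_n\times\Omega_n),\BMO(\Omega_n\times\Omega_n)]_\theta)$, and you then apply Theorem \ref{thm:product-interpolation} at each clock point and integrate. Your compatibility remark is right in substance: taking $U=\Omega_n\times\Omega_n$ shows the $\BMO$ norm dominates a multiple of the $L_2$ norm, so both are norms on the same finite-dimensional space. One sentence there is self-contradictory, though: functions constant in one variable are \emph{not} annihilated, precisely because the terms $h_\emptyset\otimes h_J$ are included.
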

\begin{proof} Theorem \ref{thm:bochner} gives
$$[Y_0,Y_1]_{\theta}=L_q({\rm Clock}_n;[L_2(\Omega_n\times\Omega_n),\BMO(\Omega_n\times\Omega_n)]_{\theta})$$
isometrically.  Apply Theorem \ref{thm:product-interpolation} pointwise and
integrate.
\end{proof}

We are now ready to prove the main result in this section.

\begin{proof}[Proof of Theorem \ref{key theorem}] Using Theorem \ref{standard complex interpolation theorem} (whose assumptions are verified in lemmas above), for the family of operators $z\to M_{\1_{E_{\epsilon}}}\mathbf{U}\mathcal{A}_{\frac{z+1}{2}},$ $0\leq\Re(z)\leq 1,$ we write
$$\|M_{\1_{E_{\epsilon}}}\mathbf{U}\mathcal{A}_{n,1-\frac1q}\|_{[L_2(\Omega_n,\ell^{n}_{2}),L_{\infty}(\Omega_n,\ell^{n}_{2})]_{\theta}\to [Y_0,Y_1]_{\theta}}
 \leq\pi^{\frac12},\quad \theta=1-\frac{2}{q}.$$
Recall that
$$[L_2(\Omega_n,\ell^{n}_{2}),L_{\infty}(\Omega_n,\ell^{n}_{2})]_{\theta}=L_q(\Omega_n;\ell^{n}_{2})$$
isometrically. Using this and Lemma \ref{cor:clock-interpolation}, we obtain
$$\|M_{\1_{E_{\epsilon}}}\mathbf{U}\mathcal{A}_{n,1-\frac1q}f\|_{L_q({\rm Clock}_n\times\Omega_n\times\Omega_n)}
 \leq c_{{\rm abs}}q^2\|f\|_{L_q(\Omega_n;\ell^{n}_{2})}.$$

The events $E_{\epsilon}$ increase to a full-measure subset of the clock space. Monotone convergence allows to pass $\epsilon\to 0,$ thus giving
$$\|\mathbf{U}\mathcal{A}_{n,1-\frac1q}f\|_{L_q({\rm Clock}_n\times\Omega_n\times\Omega_n)}
 \leq c_{{\rm abs}}q^2\|f\|_{L_q(\Omega_n;\ell^{n}_{2})}.$$
Observe that $\mathbf{U}$ is an isometry from $L_q({\rm Clock}_n\times\Omega_{n}\times\Omega_{n})$ onto itself. Thus,
$$\left\|\mathcal{A}_{n,1-\frac1q}f\right\|_{L_q({\rm Clock}_n\times\Omega_n\times\Omega_n)}
 \leq c_{{\rm abs}}q^2\|f\|_{L_q(\Omega_n;\ell^{n}_{2})}.$$
\end{proof}

\section{The sharp fractional Riesz estimate and its higher-order extension}\label{main result section}

\subsection{Proof of Theorem \ref{thm:main}} In this subsection we provide a detailed proof of the main result of the paper, Theorem \ref{thm:main}. We begin with the next lemma, which follows from the boundedness of $\mathcal{A}_{n,z}$ and probabilistic representation.
\begin{lem}\label{b boundedness lemma} For every $1<p\leq 2,$ we have
$$\left\|\mathcal{B}_{n,\frac{1}{p}}h\right\|_{L_{\frac{p}{p-1}}(\Omega_n)}\leq c_{{\rm abs}}\left(\frac{p}{p-1}\right)^2\|h\|_{L_{\frac{p}{p-1}}(\Omega_n;\ell^{n}_{2})},\quad h\in L_{\frac{p}{p-1}}(\Omega_n;\ell^{n}_{2}).$$
\end{lem}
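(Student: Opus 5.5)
Set $q=\frac{p}{p-1}\in[2,\infty)$, so that $\frac1p=1-\frac1q$. The plan is to combine the probabilistic representation of Proposition \ref{thm:averaging} with the $L_q$-bound of Theorem \ref{key theorem} for $\mathcal{A}_{n,1-\frac1q}$, and then pass through the isometric lifting $\mathcal{J}$.

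\emph{Step 1.} Apply Proposition \ref{thm:averaging} with $z=\frac1p=1-\frac1q$. This value lies in the closed strip $\{\Re(z)\in[1/2,1]\}$ because $1<p\leq 2$. For $h\in L_q(\Omega_n;\ell_2^n)$ we get
\[
\Gamma^2\Big(\frac{1+1/p}{2}\Big)\,\mathcal{J}\big(\mathcal{B}_{n,\frac1p}h\big)=\mathbb{E}_{\nu^{\otimes n}\otimes\nu^{\otimes n}}\big(\mathcal{A}_{n,\frac1p}h\big).
\]
The argument $\frac{1+1/p}{2}$ lies in $[\frac34,1)$, and $\Gamma$ is bounded above and below by absolute constants there. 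So we may divide by $\Gamma^2\big(\frac{1+1/p}{2}\big)$ at the cost of an absolute factor.

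\emph{Step 2.} Take $L_q(\Omega_n\times\Omega_n)$ norms. Lemma \ref{lem:J} gives $\|\mathcal{J}(\mathcal{B}_{n,\frac1p}h)\|_{L_q(\Omega_n\times\Omega_n)}=\|\mathcal{B}_{n,\frac1p}h\|_{L_q(\Omega_n)}$. The expectation over the clock variables is a conditional expectation, and hence a contraction from $L_q({\rm Clock}_n\times\Omega_n\times\Omega_n)$ to $L_q(\Omega_n\times\Omega_n)$; this follows from Jensen's or Minkowski's integral inequality, since $\nu^{\otimes n}\otimes\nu^{\otimes n}$ is a probability measure. Therefore
\[
\|\mathcal{B}_{n,\frac1p}h\|_{L_q(\Omega_n)}\leq c_{\rm abs}\|\mathcal{A}_{n,1-\frac1q}h\|_{L_q({\rm Clock}_n\times\Omega_n\times\Omega_n)}.
\]

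\emph{Step 3.} Theorem \ref{key theorem} bounds the right-hand side by $c_{\rm abs}q^2\|h\|_{L_q(\Omega_n;\ell_2^n)}$, and $q^2=\big(\frac{p}{p-1}\big)^2$, which is the claimed estimate.

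\emph{Main obstacle.} The only delicate point is justifying Step 1 for general $h$ rather than for the single-coordinate Walsh vectors used in the proof of Proposition \ref{thm:averaging}. Since $\Omega_n$ is finite, everything is finite-dimensional, so linearity reduces the identity to those vectors. It remains to check that the expectation is well-defined, that is, that $\mathcal{A}_{n,\frac1p}h$ is integrable in the clock variables. The factors $\tau_j^{(z-1)/2}$ carry the exponent $\frac{z-1}{2}$ with real part in $[-\frac14,0]$, so they are integrable against $e^{-t}\,dt$. Alternatively, integrability follows directly from the finiteness furnished by Theorem \ref{key theorem}.
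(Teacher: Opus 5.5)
Your proof is correct and is essentially the paper's argument: you apply Theorem~\ref{key theorem} with $q=\frac{p}{p-1}$, use that the clock expectation is a contraction on $L_q$, identify the averaged operator with $\Gamma^2\bigl(\frac{p+1}{2p}\bigr)\mathcal{J}\circ\mathcal{B}_{n,\frac1p}$ via Proposition~\ref{thm:averaging}, and remove $\mathcal{J}$ by Lemma~\ref{lem:J}. The differences are cosmetic: you order the steps differently, and you make explicit two points the paper leaves tacit, namely that $\Gamma^{-2}\bigl(\frac{p+1}{2p}\bigr)$ is bounded by an absolute constant because the argument lies in $[\frac34,1)$, and that $\mathcal{A}_{n,\frac1p}h$ is integrable in the clock variables.
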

\begin{proof}
Theorem \ref{key theorem} (applied with $q=\frac{p}{p-1}$) asserts
$$\|\mathcal{A}_{n,\frac1p}h\|_{L_\frac{p}{p-1}({\rm Clock}_n\times\Omega_n\times\Omega_n)}\leq c_{{\rm abs}}\left(\frac{p}{p-1}\right)^2\|h\|_{L_{\frac{p}{p-1}}(\Omega_n;\ell^{n}_{2})}.$$
Since the conditional expectation
$$\mathbb{E}_{\nu^{\otimes n}\otimes \nu^{\otimes n}}:L_\frac{p}{p-1}({\rm Clock}_n\times\Omega_n\times\Omega_n)\to L_{\frac{p}{p-1}}(\Omega_n\times\Omega_n)$$
is a contraction, it follows that
$$
\left\|\left(\mathbb{E}_{\nu^{\otimes n}\otimes \nu^{\otimes}}\circ\mathcal{A}_{n,\frac1p}\right)h\right\|_{L_{\frac{p}{p-1}}(\Omega_n\times\Omega_n)}\leq c_{{\rm abs}}\left(\frac{p}{p-1}\right)^2\|h\|_{L_{\frac{p}{p-1}}(\Omega_n;\ell^{n}_{2})}.
$$
By Proposition \ref{thm:averaging},
$$\left(\mathbb{E}_{\nu^{\otimes n}\otimes \nu^{\otimes n}}\circ\mathcal{A}_{n,\frac1p}\right)h=\Gamma^2\left(\frac{p+1}{2p}\right)\mathcal{J}\left(\mathcal{B}_{n,\frac1p}h\right),$$
we have
$$
\left\|\mathcal{J}\left(\mathcal{B}_{n,\frac1p}h\right)\right\|_{L_{\frac{p}{p-1}}(\Omega_n\times\Omega_n)}\leq c_{{\rm abs}}(\frac{p}{p-1})^2\big(\Gamma(\frac{p+1}{2p})\big)^{-2}\|h\|_{L_{\frac{p}{p-1}}(\Omega_n;\ell^{n}_{2})}.
$$
Since the mapping $\mathcal{J}:L_{\frac{p}{p-1}}(\Omega_n)\to L_{\frac{p}{p-1}}(\Omega_n\times\Omega_n)$ is an isometry, the assertion follows.
\end{proof}

We now prove Theorem \ref{thm:main}. 

\begin{proof}[Proof of Theorem \ref{thm:main}] According to Lemma \ref{lem:dual-reduction}, $\nabla\Delta^{-\frac1p}:L_p(\Omega_n)\to L_p(\Omega_n,\ell^{n}_{2})$ is exactly $\mathcal{B}_{n,\frac1p}:L_{\frac{p}{p-1}}(\Omega_n;\ell^{n}_{2})\to L_{\frac{p}{p-1}}(\Omega_n).$ It follows now from Lemma \ref{b boundedness lemma} that
$$
\|\nabla\Delta^{-\frac1p}\|_{L_p(\Omega_n)\to L_p(\Omega_n,\ell^{n}_{2})}\leq c_{{\rm abs}}\left(\frac{p}{p-1}\right)^{2}.
$$
This completes the proof.
\end{proof}

\subsection{Proof of Theorem \ref{thm:higher-main}} In this subsection  we prove Theorem \ref{thm:higher-main} which is the higher-order extension of Theorem \ref{thm:main}. Our argument is based on Hilbert-valued amplification, with the constant  independent of both the dimension of the hypercube and the auxiliary Hilbert dimension.

The following Hilbert-valued form of the Kahane--Khintchine inequality is well-known (see, for example,
\cite[Chapter~4]{LedouxTalagrand}). For $1\leq p\leq2,$ for every sequence $(v_l)_{l\geq0}$ in a Hilbert space $H,$
\begin{equation}\label{eq:higher-khintchine}
\left(\sum_{l\geq0}\|v_l\|_H^2\right)^{\frac{1}{2}}\leq \sqrt{2}\left(\int_0^1\left\|\sum_{l\geq0}r_l(t)v_l\right\|_H^pdt\right)^{\frac1p}.
\end{equation}

\begin{lem}[Hilbert-valued amplification]\label{lem:hilbert-amplification}

Let $(X,\mu)$ be a probability space. Let $\{T_{j}\}_{j\geq0}$ be a family of linear maps from $L_p(X)$ into itself with $1\leq p\leq 2$ such that
$$
\left\|\left(\sum_{j\geq0}|T_jf|^2\right)^{\frac12}\right\|_{L_p(X)}\leq\|f\|_{L_p(X)},\quad f\in L_p(X).
$$
For every separable Hilbert space $H$ we have
$$
\left\|\left(\sum_{j\geq0}\|T_jF\|_H^2\right)^{\frac12}\right\|_{L_p(X)}\leq \sqrt{2}\|F\|_{L_p(X;H)},\quad f\in L_p(X;H).
$$
\end{lem}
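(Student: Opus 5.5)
Assume first that $H$ is finite-dimensional (or, in general, that $F$ takes values in a finite-dimensional subspace; the general case then follows by density and monotone convergence). Fix an orthonormal basis $(e_l)_{l\geq0}$ of $H$ and write $F=\sum_l F_l e_l$ with scalar $F_l\in L_p(X)$. For each $j$, linearity of $T_j$ gives $T_jF=\sum_l (T_jF_l)e_l$. Consequently, pointwise on $X$,
\[
\sum_{j}\|T_jF\|_H^2=\sum_{j}\sum_l |T_jF_l|^2 .
\]

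The key step is to randomize with Rademacher functions. For each $t\in[0,1]$ put $f_t=\sum_l r_l(t)F_l$, which is a scalar function in $L_p(X)$. For fixed $x\in X$, apply \eqref{eq:higher-khintchine} in the Hilbert space $\ell_2$ (indexed by $j$) to the vectors $v_l=(T_jF_l(x))_{j}$. This gives
\[
\Big(\sum_j\sum_l|T_jF_l(x)|^2\Big)^{\frac12}\leq\sqrt2\Big(\int_0^1\Big(\sum_j|T_jf_t(x)|^2\Big)^{\frac p2}dt\Big)^{\frac1p},
\]
where we used $\sum_l r_l(t)T_jF_l=T_jf_t$. We then raise this to the power $p$, integrate over $X$, and use Fubini. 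Applying the hypothesis to each $f_t$ yields
\[
\Big\|\Big(\sum_j\|T_jF\|_H^2\Big)^{\frac12}\Big\|_{L_p(X)}^p\leq 2^{\frac p2}\int_0^1\|f_t\|_{L_p(X)}^p\,dt .
\]

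It remains to bound $\int_0^1\|f_t\|_p^p\,dt=\int_X\int_0^1|\sum_l r_l(t)F_l(x)|^p\,dt\,d\mu(x)$. For $p\leq2$, Hölder's inequality in $t$ together with orthonormality of the Rademacher functions gives
\[
\int_0^1\Big|\sum_l r_l(t)F_l(x)\Big|^p dt\leq\Big(\sum_l|F_l(x)|^2\Big)^{\frac p2}=\|F(x)\|_H^p .
\]
Integrating over $X$ produces $\|F\|_{L_p(X;H)}^p$, and taking $p$-th roots gives the constant $\sqrt2$.

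The only real obstacle is measurability and convergence when $H$ is infinite-dimensional. We handle it by truncating to the first $N$ basis vectors, where all sums are finite. The left side increases with $N$, while the right side is bounded by $\|F\|_{L_p(X;H)}$ uniformly in $N$, so monotone convergence finishes the proof. Convergence $T_jF^{(N)}\to T_jF$ needs a little care: on a finite measure space it follows from the hypothesis, which shows each $T_j$ is bounded on $L_p$.
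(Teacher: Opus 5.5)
Your proposal is correct and follows the paper's proof essentially step for step: you expand in an orthonormal basis, apply the Hilbert-valued Kahane--Khintchine inequality \eqref{eq:higher-khintchine} to $v_l=(T_jF_l)_j$, use Fubini together with the hypothesis on the randomized scalar functions $\sum_l r_l(t)F_l$, and finish with the upper Khintchine bound with constant $1$ for $p\le 2$, which you correctly obtain from H\"older and orthonormality. Your truncation step for infinite-dimensional $H$ is a detail the paper leaves implicit; the convergence of $T_jF^{(N)}$ is most simply justified by your own monotone-convergence bound, which shows $\sum_l|T_jF_l|^2<\infty$ almost everywhere, rather than by the scalar boundedness of each $T_j$.
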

\begin{proof} Choose an orthonormal basis $(e_l)_{l\geq0}$ of $H$ and write
$F=\sum_{l\geq0}f_le_l.$ 
For each $j$, we have
$$\|T_jF\|_H^2=\sum_{l\geq0}|T_jf_l|^2.$$
Thus,
$$
\left\|\left(\sum_{j\geq0}\|T_jF\|_H^2\right)^{1/2}\right\|_{L_p(X)}=\left\|\left(\sum_{j\geq0}\sum_{l\geq0}|T_jf_l|^2\right)^{\frac12}\right\|_{L_p(X)}.
$$
Set $v_l=\{T_jf_l\}_{j\geq0}\in \ell_{2}$ for each $l\in \mathbb{N}$. The following identity holds trivially:
$$
\left\|\left(\sum_{j\geq0}\|T_jF\|_H^2\right)^{1/2}\right\|_{L_p(X)}=\left\|\left(\sum_{l\geq0}\|v_l\|_{l_2}^2\right)^{\frac12}\right\|_{L_p(X)}.
$$
Using \eqref{eq:higher-khintchine}, we write
\begin{align*}
\left\|\left(\sum_{j\geq0}\|T_jF\|_H^2\right)^{1/2}\right\|_{L_p(X)}\leq&\sqrt{2}\left\|\left(\int_0^1\left\|\sum_{l\geq0}r_l(t)v_l\right\|_{\ell_{2}}^pdt\right)^{\frac1p}\right\|_{L_p(X)}\\
=&\sqrt{2}\left(\int_0^1\left\|\left\|\sum_{l\geq0}r_l(t)v_l\right\|_{\ell_{2}}\right\|_{L_p(X)}^pdt\right)^{\frac1p}.
\end{align*}
The following holds true $\mu$-almost everywhere
$$
\left\|\sum_{l\geq0}r_l(t)v_l\right\|_{\ell_{2}}=\left\|\left\{T_j\left(\sum_{l\geq0}r_l(t)f_l\right)\right\}_{j\geq0}\right\|_{\ell_{2}},\
$$
and, therefore, implies
$$
\left\|\left\|\sum_{l\geq0}r_l(t)v_l\right\|_{\ell_{2}}\right\|_{L_p(X)}=\left\|\left\{T_j\left(\sum_{l\geq0}r_l(t)f_l\right)\right\}_{j\geq0}\right\|_{L_p(X,\ell_2)}\leq\left\|\sum_{l\geq0}r_l(t)f_l\right\|_{L_p(X)}.
$$
Thus, combining estimates provided as above, we have
\begin{equation}\label{Hilbert calculus 1}
\begin{split}
\left\|\left(\sum_{j\geq0}\|T_jF\|_H^2\right)^{1/2}\right\|_{L_p(X)}&\leq \sqrt{2}\left(\int_0^1\left\|\sum_{l\geq0}r_l(t)f_l\right\|_{L_p(X)}^pdt\right)^{\frac{1}{p}}\\
&=\sqrt{2}\left\|\sum_{l\geq0}r_l\otimes f_l\right\|_{L_p((0,1)\times X)}.
\end{split}
\end{equation}

Applying Khinchine inequality to \eqref{Hilbert calculus 1} we get
\begin{align*}
\left\|\left(\sum_{j\geq0}\|T_jF\|_H^2\right)^{1/2}\right\|_{L_p(X)}&\leq \sqrt{2}\left\|\sum_{l\geq0}r_l\otimes f_l\right\|_{L_p((0,1)\times X)}\\
&\leq \sqrt{2}\left\|\left(\sum_{l\geq0}|f_l|^2\right)^{\frac{1}{2}}\right\|_{L_p(X)}=\sqrt{2}\|F\|_{L_{p}(X;H)}.
\end{align*}
\end{proof}

We now combine Lemma \ref{lem:hilbert-amplification} with Theorem \ref{thm:main} to give a proof of Theorem \ref{thm:higher-main}.
\begin{proof}[Proof of Theorem \ref{thm:higher-main}]
Fix $k\in\mathbb{N}$ and denote $F=\nabla^{k-1}\Delta^{-\frac{k-1}{p}}f.$ We have
$$\nabla^k\Delta^{-\frac{k}{p}}f=(\nabla\Delta^{-\frac1p})(F).$$
Thus,
$$
\left\|\nabla^k\Delta^{-\frac{k}{p}}f\right\|_{L_p(\Omega_n,l_2^{n^k})}=\left\|\left(\sum_{j\geq0}\|D_j\Delta^{-\frac1p}F\|_H^2\right)^{\frac12}\right\|_{L_p(\Omega_n)}.
$$
Here, $H=l_2^{n^{k-1}}.$ Using Lemma \ref{lem:hilbert-amplification} and Theorem \ref{thm:main}, we write
\begin{align*}
\left\|\nabla^k\Delta^{-\frac{k}{p}}f\right\|_{L_p(\Omega_n;\ell_2^{n^k})}&\leq \sqrt{2}\cdot c_{{\rm abs}}(p-1)^{-2}\cdot \|F\|_{L_p(\Omega_n;\ell_2^{n^{k-1}})}\\
&=\sqrt{2}\cdot c_{{\rm abs}}(p-1)^{-2}\cdot\|\nabla^{k-1}\Delta^{-\frac{k-1}{p}}g\|_{L_p\left(\Omega_n;\ell_2^{n^{k-1}}\right)}.
\end{align*}
Here, $c_{{\rm abs}}$ is the constant in Theorem \ref{thm:main}. The assertion follows now by induction.
\end{proof}

\subsection{Exponent $\frac{k}{p}$ is optimal}\label{optimality on exponent}

Now, we prove that the power $\frac{k}{p}$ cannot be lowered, even in the distinct-index estimate. This follows by combining the next two lemmas.
\begin{lem} If $n\geq 2k$ and $F_n=\chi_{\mathbf{1}},$  then
$$\Big\|\Big(
 \sum_{\substack{\mathbf j\in [n]^k\\
 j_1<j_2<\cdots<j_k}}
 |D_{\mathbf j}F_n|^{2}
\Big)^{\frac12}
\Big\|_{L_p(\Omega_n)}
 \geq \frac{n^{\frac{k}{p}}}{k!}\|F_n\|_{L_p(\Omega_n)}.$$
\end{lem}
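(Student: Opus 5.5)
The plan is to compute $D_{\mathbf j}F_n$ exactly for pairwise distinct indices, and then bound the $L_p$ norm from below by keeping only a well-chosen layer of the cube. Here $F_n=\chi_{\mathbf 1}$ is the indicator of the point $\mathbf 1=(1,\ldots,1)$, so $\|F_n\|_{L_p(\Omega_n)}=2^{-n/p}$. After raising to the $p$-th power, the target inequality reads
\[
2^{-n}\sum_{x\in\Omega_n}\Big(\sum_{|J|=k}|D_JF_n(x)|^2\Big)^{\frac p2}\ \geq\ \frac{n^{k}}{(k!)^p}\,2^{-n}.
\]

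\textbf{Step 1 (exact formula).} For a set $J=\{j_1<\cdots<j_k\}$, expanding the product of the commuting operators $D_j=\tfrac12({\rm id}-\text{flip}_j)$ gives
\[
D_Jf(x)=2^{-k}\sum_{S\subseteq J}(-1)^{|S|}f(x^{(S)}),
\]
where $x^{(S)}$ flips the coordinates in $S$. Because $F_n$ is supported at the single point $\mathbf 1$, at most one $S$ contributes. Writing ${\rm supp}(x)=\{i:x_i=-1\}$, we get
\[
|D_JF_n(x)|=2^{-k}\,\1_{\{{\rm supp}(x)\subseteq J\}}.
\]
Hence, if $|{\rm supp}(x)|=m\leq k$,
\[
\sum_{|J|=k}|D_JF_n(x)|^2=4^{-k}\binom{n-m}{k-m},
\]
and this sum vanishes when $m>k$.

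\textbf{Step 2 (layer sum).} Grouping the points $x$ by $m=|{\rm supp}(x)|$, the left-hand side to the $p$-th power equals
\[
2^{-n}\,2^{-kp}\sum_{m=0}^{k}\binom nm\binom{n-m}{k-m}^{p/2}.
\]
I would discard all terms except the top layer $m=k$ and, if the constant needs it, the layers $m$ close to $k$. This reduces the claim to an elementary binomial estimate of the form
\[
2^{-kp}\sum_{m\le k}\binom nm\binom{n-m}{k-m}^{p/2}\ \geq\ \frac{n^k}{(k!)^p}.
\]
For this I would use the hypothesis $n\ge 2k$, which gives $\binom nk\ge \frac{(n-k+1)^k}{k!}\ge \frac{(n/2)^k}{k!}$, and then take $p$-th roots.

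\textbf{Main obstacle: the prefactor $\frac1{k!}$.} Matching the stated constant is the delicate point. The layer $m=k$ alone yields $2^{-k}\binom nk^{1/p}$, which is of order $n^{k/p}$ but carries the powers of $2$ coming from the normalization $D_j=\tfrac12({\rm id}-\text{flip})$ and from $n\ge 2k$. I would first try to absorb these factors using $(k!)^{p}\ge k!$ and the lower layers $m<k$. Those layers contribute $\binom nm\binom{n-m}{k-m}^{p/2}$, which for $p$ near $2$ are of the same order $n^{k}$ and supply additional mass.

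A quick check at $k=1$, $p=2$ shows the difficulty is real rather than just hard. There the sum equals $n/2$ while the stated target is $n$. So under the normalization $D_j=\tfrac12({\rm id}-\text{flip})$ the stated constant appears to fail by a factor $2^{k}$. I would therefore verify the paper's convention (for instance whether the unnormalized difference $f(x)-f(x^{(j)})$ is intended here) before finalizing the constant. Whatever constant results, the exponent $n^{k/p}$ is robust, and that is what the optimality argument needs.
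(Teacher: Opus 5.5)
Your computation is correct, and so is your suspicion: with the paper's own normalization $D_jf(x)=\frac{f(x)-f(x^{(j)})}{2}$, fixed in Section 2, the inequality as printed is false. This is not a convention mismatch. Your Step 1 formula $|D_JF_n(x)|=2^{-k}\1_{\{{\rm supp}(x)\subseteq J\}}$ also follows from $D_JF_n=2^{-n}w_J\prod_{i\notin J}(1+x_i)$. For $k=1$ it gives
\[
\Big\|\Big(\sum_{j=1}^n|D_jF_n|^2\Big)^{\frac12}\Big\|_{L_p(\Omega_n)}^p=2^{-n}\,2^{-p}\big(n^{\frac p2}+n\big)\leq 2^{1-p}\,n\,2^{-n}<n\,\|F_n\|_{L_p(\Omega_n)}^p
\]
for every $1<p\leq 2$ and every $n$. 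So no bookkeeping with the lower layers can recover the factor $\frac1{k!}$. Drop that part of your plan; for $p<2$ those layers are of lower order in $n$ anyway.

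The paper's proof follows your route. It keeps only the points $x_{\mathbf j}$ with $x_i=-1$ exactly for $i\in\{j_1,\dots,j_k\}$ (your layer $m=k$) and then uses $\binom nk\geq \frac{n^k}{2^kk!}$. However, it evaluates $(D_{\mathbf j}F_n)(x_{\mathbf j})=2^{-n}\sum_{A\supseteq\{j_1,\dots,j_k\}}(-1)^k$ as $(-2)^k$. There are $2^{n-k}$ such sets $A$, so the value is actually $(-\frac12)^k$, in agreement with your formula. The paper's pointwise bound $|D_{\mathbf j}F_n|\geq 2^k\1_{\{x_{\mathbf j}\}}$ is therefore off by a factor $4^k$, and that is where the constant $\frac1{k!}$ comes from. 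Carried out correctly, the same argument gives
\[
\Big\|\Big(\sum_{j_1<\cdots<j_k}|D_{\mathbf j}F_n|^2\Big)^{\frac12}\Big\|_{L_p(\Omega_n)}\geq 2^{-k}\binom nk^{\frac1p}\|F_n\|_{L_p(\Omega_n)}\geq \frac{n^{\frac kp}}{2^{k(1+\frac1p)}(k!)^{\frac1p}}\|F_n\|_{L_p(\Omega_n)}.
\]
This is the version you should state and prove. As you observed, only the growth $n^{k/p}$ is used, combined with $\|\Delta^{\gamma}\|_{L_p(\Omega_n)\to L_p(\Omega_n)}\leq 3n^{\gamma}$, to show that the exponent $\frac kp$ is optimal. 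The paper's conclusion in that subsection therefore survives with the corrected constant.
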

\begin{proof} We have
$$F_n(x)=2^{-n}\prod_{j=1}^n(1+x_j),\quad x\in\Omega_n.$$
In other words,
$$F_n=2^{-n}\sum_{A\subset [n]}w_A.$$
Using the identity
$$D_{\mathbf{j}}w_A=\prod_{m=1}^k\1_A(j_m) w_A,$$
we write
$$D_{\mathbf{j}}F_n=2^{-n}\sum_{\substack{A\subset [n]\\ j_1,\cdots,j_k\in A}}w_A.$$

\noindent For a given $\mathbf{j}$ with $j_1<j_2<\cdots<j_k,$ let $x_{\mathbf{j}}(j_m)=-1$ for $1\leq m\leq k$ and $x_{\mathbf{j}}(j)=1$ if $j\neq j_1,\cdots,j_k.$ Hence,
$$(D_{\mathbf{j}}F_n)(x_{\mathbf{j}})=2^{-n}\sum_{\substack{A\subset [n]\\ j_1,\cdots,j_k\in A}}w_A(x_{\mathbf{j}})=2^{-n}\sum_{\substack{A\subset [n]\\ j_1,\cdots,j_k\in A}}(-1)^k=(-2)^k.$$
In other words,
$$|D_{\mathbf{j}}F_n|\geq 2^k\1_{\{x_{\mathbf{j}}\}}.$$
Indeed, it is obvious that
$$\Big\|\Big(
 \sum_{\substack{\mathbf j\in [n]^{k}\\
 j_1<j_2<\cdots<j_k}}
 |D_{\mathbf j}F_n|^{2}
\Big)^{\frac12}
\Big\|_{L_p(\Omega_n)}\geq 2^k\Big\|\Big(
 \sum_{\substack{\mathbf j\in [n]^{k}\\
 j_1<j_2<\cdots<j_k}}\1_{\{x_{\mathbf{j}}\}}\Big)^{\frac12}\Big\|_{L_p(\Omega_n)}=2^{k-\frac{n}{p}}\binom{n}{k}^{\frac1p}.$$
It now remains to note that $\|F_{n}\|_{L_p(\Omega_n)}=2^{-\frac{n}{p}}$ and that
$$\binom{n}{k}\geq \frac{n^k}{2^k\cdot k!},\quad n\geq 2k.$$
\end{proof}

\begin{lem} We have
$$\|\Delta^{\gamma}\|_{L_p(\Omega_n)\to L_p(\Omega_n)}\leq 3n^{\gamma},\quad \gamma>0.$$
\end{lem}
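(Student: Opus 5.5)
We need $\|\Delta^\gamma\|_{L_p\to L_p}\le 3n^\gamma$ for all $\gamma>0$ and $1\le p\le\infty$; the constant $3$ should be uniform in $p$, $\gamma$ and $n$. The plan is to write $\Delta^\gamma$ as a function of the operator $\Delta/n$, whose spectrum lies in $[0,1]$, and expand in powers of $I-\Delta/n$.

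\textbf{Rewriting $\Delta/n$ as an average.} Note that $I-D_j=E_j$, where $(E_jf)(x)=\frac{f(x)+f(x^{(j)})}{2}$ is the conditional expectation that averages out the $j$-th coordinate. It is a positive contraction on every $L_p(\Omega_n)$. Hence
$$T:=I-\frac{\Delta}{n}=\frac1n\sum_{j=1}^n E_j$$
is a positive contraction on every $L_p$, $1\le p\le\infty$. It acts on the Walsh basis by $Tw_A=(1-|A|/n)w_A$, with eigenvalues in $[0,1]$.

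\textbf{The power series case, $0<\gamma\le 1$.} Write $(1-s)^\gamma=\sum_{m\ge0}a_m s^m$ with $a_0=1$ and $a_m=\binom{\gamma}{m}(-1)^m\le 0$ for $m\ge1$. Evaluating at $s=1$ gives $\sum_{m\ge1}|a_m|=1$, so $\sum_m|a_m|=2$. The series converges absolutely in operator norm when applied to $T$, because $\|T\|\le1$ and the coefficients are summable. It agrees with $(\Delta/n)^\gamma$ on each $w_A$. For $A=\emptyset$ it gives $(1-1)^\gamma=0$, consistent with the convention $\Delta^z w_\emptyset=0$. Therefore
$$\|\Delta^\gamma\|_{L_p\to L_p}=n^\gamma\Big\|\sum_m a_mT^m\Big\|\le 2n^\gamma.$$

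\textbf{General $\gamma$.} Write $\gamma=N+\delta$ with $N\in\mathbb{N}\cup\{0\}$ and $\delta\in(0,1]$. Simply composing would cost a factor $2$ per unit of $\gamma$, so we need a sharper bound for integer powers.
\begin{itemize}
\item We have $\Delta/n=I-T$ with $T$ a positive contraction. The crude bound $\|I-T\|\le2$ gives $\|(\Delta/n)^N\|\le 2^N$, which is too weak.
\item Instead, use the expansion $(1-s)^\gamma=\sum_m \binom{\gamma}{m}(-s)^m$ directly for all $\gamma>0$. For $\gamma>0$ the binomial coefficients are eventually of one sign and absolutely summable, with $|\binom{\gamma}{m}|=O(m^{-\gamma-1})$. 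This still gives a finite bound, but it grows with $\gamma$.
\item The likely intended route: since $T$ is a Markov operator, $(I-T)^\gamma$ for $\gamma\le1$ is handled by the series above (constant $2$). For $\gamma>1$, use that all eigenvalues of $T$ lie in $[0,1]$ together with a subordination argument, which bounds $\|(I-T)^\gamma\|$ by a constant close to $1$.
\end{itemize}

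\textbf{Main obstacle.} The hard step is getting a constant uniform in $\gamma$ when $\gamma>1$. I would first check whether the paper only needs $\gamma=k\beta<k$, in which case a constant $c_k$ suffices.

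Otherwise, I would try to prove $\|(I-T)^N\|_{L_p\to L_p}\le 3/2$ for Markov operators with spectrum in $[0,1]$. The tool would be a Ritt/analytic-type estimate, using that $T$ is itself an average of the commuting conditional expectations $E_j$.

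If that fails, I would accept a constant $c_\gamma$. That is harmless for the optimality argument, which only needs the growth rate $n^{\gamma}$.
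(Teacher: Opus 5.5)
Your proposal does not prove the lemma for $\gamma>1$. You leave that case open, suggest an unspecified subordination or Ritt-type estimate, and otherwise fall back on a constant $c_\gamma$, which would not give the stated bound $3n^\gamma$.

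The idea is lost at your ``crude bound'' $\|I-T\|\le 2$. In fact $I-T=\frac1n\Delta=\frac1n\sum_{j=1}^n D_j$, and each $D_j$ is itself a contraction on every $L_p(\Omega_n)$. Indeed, $|D_jf(x)|\le\frac12\bigl(|f(x)|+|f(x^{(j)})|\bigr)$, or equivalently $D_j=M_{x_j}E_jM_{x_j}$ as the paper writes it. Hence $\|\Delta\|_{L_p\circlearrowleft}\le n$, i.e.\ $\|I-T\|\le 1$, and integer powers cost nothing: $\|\Delta^N\|_{L_p\circlearrowleft}\le n^N$.

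Write $\gamma=N+\delta$ with $N\in\{0,1,2,\dots\}$ and $\delta\in(0,1]$. Your own estimate for the fractional part then gives
$\|\Delta^\gamma\|\le\|\Delta^\delta\|\,\|\Delta\|^N\le 2n^{\delta}\cdot n^N=2n^\gamma$. This factorization is exactly how the paper passes from $0<\gamma<1$ to general $\gamma$. No spectral, subordination or Ritt-type argument is needed.

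Your treatment of $0<\gamma\le 1$ is correct, and it is a genuinely different route from the paper's. The paper uses the subordination formula $\Delta^{\gamma}=c_\gamma\int_0^\infty(1-e^{-t\Delta})\,t^{-1-\gamma}\,dt$ together with $\|1-e^{-t\Delta}\|_{L_p\circlearrowleft}\le\min\{2,e^{tn}-1\}$, which produces the constant $3$. Your binomial expansion of $(I-T)^\gamma$ in the Markov operator $T=\frac1n\sum_j E_j$ uses only that the coefficients $a_m$, $m\ge1$, are nonpositive with $\sum_{m\ge1}|a_m|=1$. It avoids the integral and gives the slightly better constant $2$, which combined with the fix above yields the lemma with $2n^\gamma$ in place of $3n^\gamma$.
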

\begin{proof}  Each $E_j,$ $1\leq j\leq n,$ is a contraction on $L_p(\Omega_n).$ Hence, so is $D_j=M_{x_j}\circ E_j\circ M_{x_j}.$  Clearly,
$$\|\Delta\|_{L_p(\Omega_n)\circlearrowleft}\leq\sum_{j=1}^n\|D_j\|_{L_p(\Omega_n)\circlearrowleft}\leq\sum_{j=1}^n1=n.$$

The semigroup $(e^{-t\Delta})_{t>0}$ is contractive. We obviously have
$$\|1-e^{-t\Delta}\|_{L_p(\Omega_n)\to L_p(\Omega_n)}\leq\|1\|_{L_p(\Omega_n)\circlearrowleft}+\|e^{-t\Delta}\|_{L_p(\Omega_n)\circlearrowleft}=1.$$
On the other hand, we have
$$\|1-e^{-t\Delta}\|_{L_p(\Omega_n)\circlearrowleft}=\|\sum_{k\geq1}\frac{(-t\Delta)^k}{k!}\|_{L_p(\Omega_n)\circlearrowleft}\leq\sum_{k\geq1}\frac{t^k}{k!}\|\Delta^k\|_{L_p(\Omega_n)\circlearrowleft}=$$
$$=\sum_{k\geq1}\frac{t^k}{k!}\|\Delta\|_{L_p(\Omega_n)\circlearrowleft}^k\leq\sum_{k\geq1}\frac{(tn)^k}{k!}=e^{tn}-1.$$
For $0<\gamma<1,$ we write
$$\Delta^{\gamma}=c_{\gamma}\int_0^{\infty}(1-e^{-t\Delta})\frac{dt}{t^{1+\gamma}},\mbox{ where }c_{\gamma}^{-1}=\int_0^{\infty}(1-e^{-t})\frac{dt}{t^{1+\gamma}}.$$
Hence,
$$\|\Delta^{\gamma}\|_{L_p(\Omega_n)\to L_p(\Omega_n)}\leq c_{\gamma}\int_0^{\infty}\min\{2,e^{tn}-1\}\frac{dt}{t^{1+\gamma}}=c_{\gamma}n^{\gamma}\int_0^{\infty}\min\{2,e^t-1\}\frac{dt}{t^{1+\gamma}}.$$
Since
$$\frac{\min\{2,e^t-1\}}{1-e^{-t}}\leq 3,\quad t>0,$$
it follows that
$$\|\Delta^{\gamma}\|_{L_p(\Omega_n)\to L_p(\Omega_n)}\leq 3n^{\gamma},\quad 0<\gamma<1.$$
This yields the assertion for $0<\gamma<1$. For a general $\gamma,$ we write
$$\|\Delta^{\gamma}\|_{L_p(\Omega_n)\to L_p(\Omega_n)}\leq \|\Delta^{\gamma-\lfloor\gamma\rfloor}\|_{L_p(\Omega_n)\to L_p(\Omega_n)}\|\Delta\|_{L_p(\Omega_n)\to L_p(\Omega_n)}^{\lfloor\gamma\rfloor}\leq 3n^{\gamma-\lfloor\gamma\rfloor}\cdot n^{\lfloor \gamma\rfloor}.$$
This completes the proof.
\end{proof}

\section{Lower estimates for the higher-order constants}\label{optimality on constants}

We do not know whether constant $(p-1)^{-2k}$ obtained in Theorems \ref{thm:main} and \ref{thm:higher-main} is optimal for $p$ close to $1.$ As a partial relief, we provide the following norm estimate from below. 

\begin{prop}\label{prop:constants} For every $k\geq1,$ we have
$$\sup_{n\geq1}\|\nabla^k\Delta^{-\frac{k}{p}}\|_{L_p(\Omega_n)\to L_p(\Omega_n;\ell^{n^k}_2)}\geq c_k(p-1)^{-\frac{k}{2}},\quad 1<p\leq 2.$$
\end{prop}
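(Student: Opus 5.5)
The plan is to bound the norm from below by a Gaussian model and then test it on one explicit function. Throughout, $q=\frac{p}{p-1}$ is not needed: I work directly on $L_p$ with $p\downarrow1$. For $1+\eta\le p\le 2$ with $\eta>0$ fixed, the claim follows from the case $k=1$ (constant $1$) by testing on a single Walsh function, so I concentrate on $p\to1$.

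\emph{Step 1 (CLT transference).} I take test functions of the form $f_n(x)=\varphi\bigl(\frac{x_1+\cdots+x_n}{\sqrt n}\bigr)$ with $\varphi$ a polynomial or an entire function of controlled growth. By the central limit theorem for Walsh chaos (Hermite polynomials $H_m$ arise as limits of the symmetric Walsh sums of degree $m$), the following convergences hold as $n\to\infty$:
\begin{itemize}
\item $\Delta$ acting on $f_n$ converges to the Ornstein--Uhlenbeck operator $L$ acting on $\varphi$;
\item $\bigl(\sum_{\mathbf j}|D_{\mathbf j}f_n|^2\bigr)^{1/2}$ converges to $|\varphi^{(k)}|$. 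Indeed $D_jf_n\approx \frac{x_j}{\sqrt n}\varphi'$, and the terms with repeated indices are of lower order.
\end{itemize}
Hence $\sup_n\|\nabla^k\Delta^{-k/p}\|\ge \|\varphi^{(k)}\|_{L_p(\gamma)}/\|L^{k/p}\varphi\|_{L_p(\gamma)}$ for every admissible $\varphi$. Here $\gamma$ is the standard Gaussian measure, and I apply this to $L^{-k/p}$ of a test function.

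\emph{Step 2 (choice of test function).} I take the exponential martingale $\varphi_\lambda(x)=e^{\lambda x-\lambda^2/2}$. It satisfies $\varphi_\lambda^{(k)}=\lambda^k\varphi_\lambda$, $P_t\varphi_\lambda=\varphi_{e^{-t}\lambda}$ and $\|\varphi_\lambda\|_{L_p(\gamma)}=e^{(p-1)\lambda^2/2}$. The scale is $\lambda^2=(p-1)^{-1}$. For this scale the $L_p(\gamma)$-mass of $\varphi_\lambda$ sits, under the tilted law $N(p\lambda,1)$, at distance $(p-1)\lambda=\sqrt{p-1}=O(1)$ from $\lambda$. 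In that sense $\varphi_\lambda$ has ``effective $L_p$-degree'' $(p-1)\lambda^2\asymp1$. The target inequality is
\[
\|L^{k/p}\varphi_\lambda\|_{L_p(\gamma)}\le C_k\|\varphi_\lambda\|_{L_p(\gamma)},\qquad \lambda^2=(p-1)^{-1},
\]
which would give the ratio $\lambda^k=(p-1)^{-k/2}$.

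\emph{Step 3 (main obstacle).} The hard part is proving the bound on $L^{k/p}\varphi_\lambda$. I would write $L^{\beta}$ through the subordination formula $c_\beta\int_0^\infty(1-P_t)^{\lceil\beta\rceil}\,\frac{dt}{t^{1+\beta}}$, as in the proof of the lemma on $\|\Delta^\gamma\|$. The relevant differences are $\varphi_\lambda-\varphi_{e^{-t}\lambda}$, together with their iterates.

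A naive bound uses $\|\partial_\mu\varphi_\mu\|_p=\|(x-\mu)\varphi_\mu\|_p\lesssim\|\varphi_\mu\|_p$ for $\mu\le\lambda$ (via the tilted-Gaussian computation). This only yields $\|(1-P_t)\varphi_\lambda\|_p\lesssim\min(1,\lambda t)\|\varphi_\lambda\|_p$, hence a loss of $\lambda^{k/p}$, which is fatal.

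To recover the cancellation I would first try to compute $\|\varphi_\lambda-\varphi_\mu\|_p$ exactly in the tilted frame. The key point is that $\varphi_\mu/\varphi_\lambda=e^{(\mu-\lambda)x-(\mu^2-\lambda^2)/2}$ is close to $1$ on the bulk $x\approx p\lambda$. This happens precisely when $(\lambda-\mu)\cdot(p-1)\lambda$ and $(\lambda-\mu)$ are small, i.e.\ for $t\lesssim 1$ rather than $t\lesssim\lambda^{-1}$, which would make the $t$-integral $O(1)$.

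If this exact computation fails, my fallback is to replace $\varphi_\lambda$ by a truncated Hermite expansion concentrated on degrees $m\asymp\lambda^2(p-1)$. On such a function $L^{k/p}$ is essentially a multiplier of size $O(1)$, and I would control it with Meyer's multiplier theorem on $L_p(\gamma)$. In either approach I would keep track of the $p$-dependence of all constants, since the whole point is a bound uniform as $p\to1$.
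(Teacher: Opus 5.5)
There is a genuine gap. The inequality your argument rests on,
\[
\|L^{k/p}\varphi_\lambda\|_{L_p(\gamma)}\le C_k\|\varphi_\lambda\|_{L_p(\gamma)},\qquad \lambda^2=(p-1)^{-1},
\]
is false, and already for $k=1$ the exponential test function gives only an absolute-constant lower bound.

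A first warning sign is that $L\varphi_\lambda=\lambda(x-\lambda)\varphi_\lambda$. Under the probability measure $|\varphi_\lambda|^p\,d\gamma/\|\varphi_\lambda\|_p^p$, the variable $w=x-\lambda$ is distributed as $N(\sqrt{p-1},1)$. Hence $\|L\varphi_\lambda\|_p\asymp\lambda\|\varphi_\lambda\|_p=\|\varphi_\lambda'\|_p$. In $L_p$ the effective degree of $\varphi_\lambda$ is therefore $\lambda$, not $O(1)$.

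The same holds for $L^{1/p}$. Put $\delta(t)=\lambda(1-e^{-t})$ and $c_\beta=\beta/\Gamma(1-\beta)$. Then $L^\beta\varphi_\lambda=g_\beta(w)\,\varphi_\lambda$ with
\[
g_\beta(w)=c_\beta\int_0^\infty\bigl(1-e^{-\delta(t)w-\delta(t)^2/2}\bigr)\,t^{-1-\beta}\,dt .
\]
For $w\ge1$ the integrand is nonnegative for every $t$, so there is no cancellation to exploit. For $t\le\lambda^{-1}$ the integrand is at least $(1-e^{-1})^2\lambda t$. Hence $g_\beta(w)\ge(1-e^{-1})^2\frac{\beta}{\Gamma(2-\beta)}\lambda^{\beta}$ on the event $\{w\ge1\}$, whose probability is at least $P(G\ge1)$. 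Taking $\beta=1/p$ and using $\lambda^{1/p}\ge e^{-1/(2e)}\lambda$ gives $\|L^{1/p}\varphi_\lambda\|_p\ge c\,(p-1)^{-1/2}\|\varphi_\lambda\|_p$. So your ``naive'' Minkowski bound is sharp, and $\|\varphi_\lambda'\|_p/\|L^{1/p}\varphi_\lambda\|_p\le C$.

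The Step 3 heuristic is where this slips. The ratio $\varphi_\mu/\varphi_\lambda=e^{-\delta w-\delta^2/2}$ involves $w$, which fluctuates at scale $1$ in the bulk. So the ratio is close to $1$ only when $\delta\ll1$, i.e.\ $t\ll\lambda^{-1}$, not for $t\lesssim1$.

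The fallback cannot work either. Suppose a mean-zero $\varphi$ has all Hermite degrees at most some $d$ independent of $p$. Then, uniformly in $p\in(1,2]$,
\[
\|\varphi^{(k)}\|_p\le\|\varphi^{(k)}\|_2\le d^{k/2}\|\varphi\|_2\le d^{k/2}\|L^{k/p}\varphi\|_2\le C_{d,k}\|L^{k/p}\varphi\|_1\le C_{d,k}\|L^{k/p}\varphi\|_p .
\]
Here the fourth inequality is norm equivalence on the finite-dimensional space of polynomials of degree at most $d$. So degrees $m\asymp\lambda^2(p-1)\asymp1$ produce no blow-up.

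The missing idea is to let the Gaussian growth appear on the dual side, where it comes for free. The paper passes to the adjoint $h\mapsto\Delta^{-k/p}\sum_{\mathbf j}D_{\mathbf j}h_{\mathbf j}$ on $L_q$, with $q=\frac{p}{p-1}$. It tests this on $h_{\mathbf j}=w_{\{j_1,\dots,j_k\}}$ for $j_1<\dots<j_k$ and $h_{\mathbf j}=0$ otherwise.
\begin{itemize}
\item The pointwise $\ell_2$-norm of this field is the constant $\binom{n}{k}^{1/2}$.
\item Its image is $k^{-k/p}\sum_{j_1<\dots<j_k}w_{\{j_1,\dots,j_k\}}$. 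After division by $n^{k/2}$ this converges in distribution to $H_k(G)/k!$, by M\"obius inversion over set partitions plus the CLT.
\item Fatou's lemma together with $\|H_k(G)\|_q\ge c_kq^{k/2}$ then gives the bound.
\end{itemize}

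Your Gaussian reduction in Step 1 is reasonable. The repeated-index terms are not of lower order, but they only help a lower bound. Your scale $\lambda\asymp\sqrt q$ is also the right one; what is wrong is the profile. In the primal Gaussian picture, a test function that does work is essentially the dual extremizer. For $k=1$, take $\varphi=L^{-1/p}(|x|^{q-2}x)$. Then $\|\varphi'\|_p\ge\int\varphi'\,d\gamma=\langle\varphi,x\rangle=\mathbb{E}|G|^q$, while $\|L^{1/p}\varphi\|_p=(\mathbb{E}|G|^q)^{1/p}$. The ratio is $(\mathbb{E}|G|^q)^{1/q}\asymp\sqrt q$, and an exponential profile cannot play this role.
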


\begin{lem}\label{rearrangement lemma} For every finitely supported mapping $f:\mathbb{Z}^k_+\to\mathbb{C},$ we have 
$$\sum_{\substack{\mathbf{j}\in\mathbb{Z}_+^k\\
j_1,\cdots,j_k\mbox{ are distinct}}}f(\mathbf{j})=\sum_{\sigma\in\Pi_k}
\Big(\prod_{B\in\sigma}
(-1)^{|B|-1}(|B|-1)!\Big)\sum_{\substack{\mathbf{j}\in\mathbb{Z}_+^k\\m\sim_\sigma l\Rightarrow j_m=j_l}}
f(j_1,\ldots,j_k).$$
Here, $\Pi_k$ denote the lattice of partitions of $[k].$
\end{lem}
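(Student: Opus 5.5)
The identity is Möbius inversion on the partition lattice $\Pi_k$. The plan is to group the sum over all $\mathbf j\in\mathbb{Z}_+^k$ according to the exact coincidence pattern of the coordinates. Since $f$ is finitely supported, every sum below is finite and can be rearranged freely.

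First, attach to each $\mathbf j\in\mathbb{Z}_+^k$ its kernel partition $\ker(\mathbf j)\in\Pi_k$, in which $m$ and $l$ lie in the same block if and only if $j_m=j_l$. For $\sigma\in\Pi_k$ set
\[
S_=(\sigma)=\sum_{\ker(\mathbf j)=\sigma}f(\mathbf j),\qquad S_\geq(\sigma)=\sum_{\substack{\mathbf j\in\mathbb{Z}_+^k\\ m\sim_\sigma l\Rightarrow j_m=j_l}}f(\mathbf j).
\]
The condition ``$m\sim_\sigma l\Rightarrow j_m=j_l$'' says exactly that $\ker(\mathbf j)$ is coarser than or equal to $\sigma$, written $\ker(\mathbf j)\geq\sigma$ in the refinement order. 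Hence
\[
S_\geq(\sigma)=\sum_{\pi\geq\sigma}S_=(\pi).
\]
The left-hand side of the lemma is $S_=(\hat0)$, where $\hat0$ is the partition of $[k]$ into singletons.

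Next, apply Möbius inversion on the finite poset $\Pi_k$. This gives
\[
S_=(\hat0)=\sum_{\sigma\in\Pi_k}\mu(\hat0,\sigma)\,S_\geq(\sigma).
\]
The interval $[\hat0,\sigma]$ is isomorphic to $\prod_{B\in\sigma}\Pi_{|B|}$, and the Möbius function is multiplicative over such products. The classical formula $\mu_{\Pi_m}(\hat0,\hat1)=(-1)^{m-1}(m-1)!$ then yields
\[
\mu(\hat0,\sigma)=\prod_{B\in\sigma}(-1)^{|B|-1}(|B|-1)!,
\]
which is exactly the claimed identity.

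The only nontrivial input is the value $\mu_{\Pi_m}(\hat0,\hat1)=(-1)^{m-1}(m-1)!$. If a self-contained argument is wanted, I would avoid Möbius theory and verify the formula directly by a polynomial identity. Take $f=\1_{\{\mathbf j\}}$ restricted to indices in $[N]$, so that the inner sum on the right equals $N^{|\sigma|}$ up to the constraint pattern. Then check that
\[
\sum_{\sigma}\prod_{B\in\sigma}(-1)^{|B|-1}(|B|-1)!\,N^{|\sigma|}=N(N-1)\cdots(N-k+1).
\]
This holds because the left side equals $\sum_{\pi\in\mathfrak S_k}\operatorname{sgn}(\pi)N^{\#\text{cycles}(\pi)}$: each block of size $b$ carries $(b-1)!$ cyclic orders, each of sign $(-1)^{b-1}$. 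That permutation sum is the standard signed Stirling expansion of the falling factorial.

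Checking this identity for one counting function is not enough on its own, since the lemma is stated for every $f$. I would extend it by applying the argument pattern-by-pattern: a fixed $\mathbf j$ with $\ker(\mathbf j)=\pi$ is counted on the right with total weight $\sum_{\sigma\leq\pi}\mu(\hat0,\sigma)$, and this equals $\delta_{\pi,\hat0}$ by the defining recursion of $\mu$. That recursion is the main point to state carefully, but it is routine.
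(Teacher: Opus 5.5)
Your argument is correct and is essentially the paper's proof: you group $\mathbf j$ by its kernel partition, note $S_{\geq}(\sigma)=\sum_{\pi\geq\sigma}S_{=}(\pi)$, apply M\"obius inversion on $\Pi_k$ at $\hat0$, and insert $\mu(\hat0,\sigma)=\prod_{B\in\sigma}(-1)^{|B|-1}(|B|-1)!$, which the paper simply cites from Stanley.

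Your optional ``self-contained'' detour does not close as written. Its final step invokes the recursion for $\mu$ after tacitly identifying the weights with $\mu(\hat0,\sigma)$, which is the very thing it was meant to establish; moreover, a single counting identity only fixes sums over partitions with a given number of blocks. It is repaired by observing that $\sum_{\sigma\leq\pi}\prod_{B\in\sigma}(-1)^{|B|-1}(|B|-1)!$ factors over the blocks $C$ of $\pi$ as $\prod_{C\in\pi}\sum_{\rho\in\mathfrak S_{|C|}}\operatorname{sgn}\rho=\delta_{\pi,\hat0}$.
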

\begin{proof} For a tuple $\mathbf{j}=(j_1,\cdots,j_k),$ let $\ker(\mathbf{j})\in\Pi_k$ denote its kernel partition:
$$m\sim_{\ker(\mathbf{j})}l \mbox{ iff }j_m=j_l.$$
Thus, $j_1,\cdots,j_k$ are all distinct if and only if $\ker(\mathbf{j})=\hat{0},$ where $\hat{0}$ is the discrete partition. For each $\pi\in\Pi_k,$ define
$$F(\pi)=\sum_{\substack{\mathbf{j}\in\mathbb{Z}_+^k\\
m\sim_\pi l\Rightarrow j_m=j_l}}f(\mathbf{j}).$$
The condition in the inner sum is equivalent to $\ker(\mathbf{j})\geq\pi$ in the partition lattice. Hence, if we define
$$G(\sigma)=\sum_{\substack{\mathbf{j}\in\mathbb{Z}_+^k\\ \ker(j)=\sigma}}f(\mathbf{j}),$$
then
$$F(\pi)=\sum_{\sigma\geq\pi}G(\sigma).$$

Using M\"obius inversion formula (see \cite[Proposition 3.7.2]{Stan2012}), we write
$$G(\pi)=\sum_{\sigma\geq\pi}\mu(\pi,\sigma)F(\sigma).$$
Taking $\pi=\hat{0},$ we obtain
$$G(\hat{0})=\sum_{\sigma\in\Pi_k}\mu(\hat{0},\sigma)F(\sigma).$$
Since
$$G(\hat{0})=\sum_{\substack{\mathbf{j}\in\mathbb{Z}_+^k\\
\ker(\mathbf{j})=\hat{0}}}f(\mathbf{j})=\sum_{\substack{\mathbf{j}\in\mathbb{Z}_+^k\\
j_1,\cdots,j_k\mbox{ are distinct}}}f(\mathbf{j}),$$
it follows that
$$\sum_{\substack{\mathbf{j}\in\mathbb{Z}_+^k\\
j_1,\cdots,j_k\mbox{ are distinct}}}f(\mathbf{j})=\sum_{\sigma\in\Pi_k}
\mu(\hat 0,\sigma)\sum_{\substack{\mathbf{j}\in\mathbb{Z}_+^k\\m\sim_\sigma l\Rightarrow j_m=j_l}}
f(j_1,\ldots,j_k).$$

Finally, the M\"obius function of the partition lattice satisfies (see \cite[Example 3.10.4, pp. 318--319]{Stan2012})
$$
\mu(\hat{0},\sigma)=\prod_{B\in\sigma}(-1)^{|B|-1}(|B|-1)!.
$$
Combining the last two equalities, we complete the proof.
\end{proof}

\begin{lem}\label{convergence in distribution lemma} We have
$$n^{-\frac{k}{2}}\sum_{\substack{\mathbf{j}\in [n]^k\\ j_1,\cdots,j_k\mbox{ are distinct}}}r_{j_1}\cdots r_{j_k}\to H_k(G),\quad n\to\infty,$$
in distribution. Here, $H_k$ is the probabilist's Hermite polynomial and $G$ is the standard Gaussian random variable.
\end{lem}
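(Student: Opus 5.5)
The plan is to combine Lemma \ref{rearrangement lemma} with the central limit theorem for the normalized sum $S_n=n^{-1/2}\sum_{j=1}^n r_j$. We apply Lemma \ref{rearrangement lemma} to $f(\mathbf j)=\prod_{m=1}^k r_{j_m}\1_{[n]}(j_m)$, working pointwise in the underlying probability space. For a partition $\sigma\in\Pi_k$, the constrained sum over $\mathbf j$ with $j_m=j_l$ whenever $m\sim_\sigma l$ factorizes over the blocks of $\sigma$. This gives
\[
\sum_{\substack{\mathbf j:\ m\sim_\sigma l\Rightarrow j_m=j_l}} r_{j_1}\cdots r_{j_k}=\prod_{B\in\sigma}\Big(\sum_{j=1}^n r_j^{|B|}\Big).
\]
Since $r_j^2=1$, each block factor equals $n^{1/2}S_n$ if $|B|$ is odd and $n$ if $|B|$ is even.

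After multiplying by $n^{-k/2}$, the contribution of $\sigma$ becomes
\[
\mu(\hat0,\sigma)\, n^{-k/2}\, n^{\#\{\text{odd blocks}\}/2}\, n^{\#\{\text{even blocks}\}}\, S_n^{\#\{\text{odd blocks}\}}.
\]
The exponent of $n$ is $\tfrac12\big(\#\text{odd}+2\#\text{even}-k\big)$, and $k=\sum_B|B|$. Each block contributes $1-|B|\le 0$ if it is odd and $2-|B|\le 0$ if it is even. Equality holds exactly when every block is a singleton or a pair.

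Hence every other partition carries a factor $n^{-1/2}$ or smaller, multiplied by a power of $S_n$. Since $S_n$ is tight (indeed $S_n\Rightarrow G$), these terms tend to $0$ in probability. The surviving terms come from partitions into $a$ pairs and $k-2a$ singletons. Each such term has Möbius weight $(-1)^a$ and equals $(-1)^a S_n^{k-2a}$, and there are $\frac{k!}{2^a a!(k-2a)!}$ such partitions. The sum of the surviving terms is therefore
\[
\sum_{a\le k/2}\frac{(-1)^a k!}{2^a a!(k-2a)!}S_n^{k-2a}=H_k(S_n),
\]
which is exactly the explicit formula for the probabilist's Hermite polynomial.

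We conclude that
\[
n^{-k/2}\sum_{\text{distinct}} r_{j_1}\cdots r_{j_k}=H_k(S_n)+o_{\mathbb P}(1).
\]
The classical CLT gives $S_n\Rightarrow G$, the continuous mapping theorem gives $H_k(S_n)\Rightarrow H_k(G)$, and Slutsky's lemma absorbs the remainder. The only step requiring care is the bookkeeping of exponents, namely checking that the leading partitions are exactly the pair/singleton ones and that the Möbius weights $(-1)^{|B|-1}(|B|-1)!$ reduce to $(-1)^a$. Both follow from the block-by-block count above.
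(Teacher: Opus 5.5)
Your proposal is correct and takes essentially the same route as the paper. Both arguments use Möbius inversion via Lemma \ref{rearrangement lemma}, factor the constrained sums as $\prod_{B\in\sigma}\sum_j r_j^{|B|}$, count exponents to show that only partitions into singletons and pairs survive, identify the limiting polynomial with $H_k$, and finish with the central limit theorem. Your use of tightness of $S_n$ and Slutsky's lemma to discard the lower-order partitions is a slightly more careful version of the paper's step ``coefficients of $P_{k,n}$ converge to those of $P_k$.''
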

\begin{proof} Using Lemma \ref{rearrangement lemma}, we write
$$\sum_{\substack{\mathbf{j}\in [n]^k\\
j_1,\cdots,j_k\mbox{ are distinct}}}r_{j_1}\cdots r_{j_k}=\sum_{\sigma\in\Pi_k}
\Big(\prod_{B\in\sigma}
(-1)^{|B|-1}(|B|-1)!\Big)\sum_{\substack{\mathbf{j}\in [n]^k\\m\sim_\sigma l\Rightarrow j_m=j_l}}
r_{j_1}\cdots r_{j_k}.$$
Clearly,
$$\sum_{\substack{\mathbf{j}\in [n]^k\\m\sim_\sigma l\Rightarrow j_m=j_l}}
r_{j_1}\cdots r_{j_k}=\prod_{B\in\sigma}(\sum_{j=1}^nr_j^{|B|})=(\sum_{j=1}^n)^{a_{\sigma}}\cdot n^{b_{\sigma}},$$
where $a_{\sigma}$ is the number of $B\in\sigma$ with $|B|$ being odd number, while $b_{\sigma}$ is the number of $B\in\sigma$ with $|B|$ being even number.
Thus,
$$\sum_{\substack{\mathbf{j}\in [n]^k\\
j_1,\cdots,j_k\mbox{ are distinct}}}r_{j_1}\cdots r_{j_k}=\sum_{\sigma\in\Pi_k}
\Big(\prod_{B\in\sigma}
(-1)^{|B|-1}(|B|-1)!\Big)\cdot (\sum_{j=1}^nr_j)^{a_{\sigma}}\cdot n^{b_{\sigma}}.$$
Dividing by $n^{\frac{k}{2}},$ we obtain
\begin{align*}
&n^{-\frac{k}{2}}\sum_{\substack{\mathbf{j}\in [n]^k\\
j_1,\cdots,j_k\mbox{ are distinct}}}r_{j_1}\cdots r_{j_k}\\
=&\sum_{\sigma\in\Pi_k}\left(\prod_{B\in\sigma}
(-1)^{|B|-1}(|B|-1)!\right)\cdot \left(n^{-\frac12}\sum_{j=1}^nr_j\right)^{a_{\sigma}}\cdot n^{b_{\sigma}+\frac12a_{\sigma}-\frac{k}{2}}.
\end{align*}

Consider polynomials
$$P_{k,n}:t\to \sum_{\sigma\in\Pi_k}
\Big(\prod_{B\in\sigma}
(-1)^{|B|-1}(|B|-1)!\Big)\cdot t^{a_{\sigma}}\cdot n^{b_{\sigma}+\frac12a_{\sigma}-\frac{k}{2}},\quad t\in\mathbb{R}.$$
We have
$$n^{-\frac{k}{2}}\sum_{\substack{\mathbf{j}\in [n]^k\\
j_1,\cdots,j_k\mbox{ are distinct}}}r_{j_1}\cdots r_{j_k}=P_{k,n}(n^{-\frac12}\sum_{j=1}^nr_j).$$

Consider polynomial
$$P_k:t\to \sum_{\substack{\sigma\in\Pi_k\\ \mbox{ if }B\in\sigma\mbox{ then }|B|=1\mbox{ or }|B|=2}}
\Big(\prod_{B\in\sigma}
(-1)^{|B|-1}(|B|-1)!\Big)\cdot t^{a_{\sigma}},\quad t\in\mathbb{R}.$$
Obviously, coefficients of $P_{k,n}$ converge to that of $P_k$ as $n\to\infty.$ Using Central Limit Theorem, we conclude that
$$n^{-\frac{k}{2}}\sum_{\substack{\mathbf{j}\in [n]^k\\
j_1,\cdots,j_k\mbox{ are distinct}}}r_{j_1}\cdots r_{j_k}\to P_k(G),\quad n\to\infty,$$
in distribution. Obviously,
$$P_k(t)=\sum_{\substack{\sigma\in\Pi_k\\ \mbox{ if }B\in\sigma\mbox{ then }|B|=1\mbox{ or }|B|=2}}
(-1)^{b_{\sigma}}\cdot t^{a_{\sigma}}=\sum_{a+2b=k}\binom{k}{a} (2b-1)!!(-1)^bt^a,$$
which exactly coincides with probabilist's Hermite polynomial $H_k.$
\end{proof}

\begin{lem}\label{hermite gauss lemma} We have
$$\liminf_{n\to\infty}n^{-\frac{k}{2}}\|\Delta^{-\frac{k}{p}}\sum_{\substack{\mathbf{j}\in [n]^k\\ j_1<\cdots<j_k}}D_{\mathbf{j}}w_{\{j_1,\cdots,j_k\}}\|_{L_q(\Omega_n)}\geq \frac1{k!\cdot k^{\frac{k}{p}}}\|H_k(G)\|_q.$$
Here, $H_k$ is the probabilist's Hermite polynomial and $G$ is the standard Gaussian random variable.
\end{lem}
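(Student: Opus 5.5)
The plan is to compute the function inside the norm explicitly, which reduces the estimate to Lemma \ref{convergence in distribution lemma} together with lower semicontinuity of $L_q$ norms under convergence in distribution.

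\emph{Step 1: the function is a homogeneous chaos.} For a strictly increasing $\mathbf j=(j_1,\ldots,j_k)$, put $A=\{j_1,\ldots,j_k\}$. Since $D_{j_m}w_A=\1_A(j_m)w_A=w_A$ for each $m$, we get $D_{\mathbf j}w_A=w_A$. Hence
\[
g_n:=\sum_{\substack{\mathbf{j}\in [n]^k\\ j_1<\cdots<j_k}}D_{\mathbf{j}}w_{\{j_1,\cdots,j_k\}}=\sum_{A\subset[n],\,|A|=k}w_A .
\]
Every Walsh function occurring here has $|A|=k$, so $\Delta^{-\frac kp}g_n=k^{-\frac kp}g_n$. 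Next identify the coordinates $x_j$ with Rademacher variables $r_j$ on $\Omega_n$. Each $k$-set corresponds to exactly $k!$ ordered tuples of distinct indices, so
\[
g_n=\frac1{k!}\sum_{\substack{\mathbf{j}\in [n]^k\\ j_1,\cdots,j_k\mbox{ distinct}}}r_{j_1}\cdots r_{j_k}.
\]

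\emph{Step 2: passage to the limit.} Set $X_n:=n^{-\frac k2}\Delta^{-\frac kp}g_n$. By Step 1 and Lemma \ref{convergence in distribution lemma},
\[
X_n\to \frac{1}{k!\,k^{\frac kp}}H_k(G)
\]
in distribution. It remains to show $\liminf_n\|X_n\|_q\geq\|X\|_q$ whenever $X_n\to X$ in distribution. Fix $M>0$. The function $t\mapsto\min\{|t|^q,M\}$ is bounded and continuous, so
\[
\liminf_n\mathbb E|X_n|^q\geq\lim_n\mathbb E\min\{|X_n|^q,M\}=\mathbb E\min\{|X|^q,M\}.
\]
Letting $M\to\infty$ and using monotone convergence gives the claim. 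Alternatively, one can use Skorokhod representation and Fatou's lemma.

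\emph{Main obstacle.} There is little obstruction, since the analytic content sits in Lemma \ref{convergence in distribution lemma}. The one point needing care is the bookkeeping between unordered sets and ordered distinct tuples, which produces the factor $1/k!$, together with the exact eigenvalue $k^{-k/p}$. I would also note that only lower semicontinuity is needed here, not uniform integrability. Uniform integrability would in fact hold by hypercontractivity, but it is unnecessary for a $\liminf$ bound.
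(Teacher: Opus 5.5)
Your proposal is correct and follows essentially the paper's proof. The steps match: $D_{\mathbf j}w_{\{j_1,\ldots,j_k\}}=w_{\{j_1,\ldots,j_k\}}$, the eigenvalue $k^{-k/p}$, and the factor $1/k!$ from passing to ordered distinct tuples, followed by Lemma \ref{convergence in distribution lemma} and lower semicontinuity of the $L_q$ norm under convergence in distribution. Your truncation argument with $\min\{|t|^q,M\}$ (or Skorokhod plus Fatou) simply spells out what the paper compresses into ``the Fatou lemma''.
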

\begin{proof}
Using the identity
$$
D_{\mathbf{j}}w_A=\prod_{m=1}^k\1_A(j_m) w_A,
$$
we write
$$
D_{\mathbf{j}}w_{\{j_1,\cdots,j_k\}}=w_{\{j_1,\cdots,j_k\}},\quad \Delta w_{\{j_1,\cdots,j_k\}}= k w_{\{j_1,\cdots,j_k\}}.
$$
Thus,
\begin{align*}
&\Delta^{-\frac{k}{p}}\sum_{\substack{\mathbf{j}\in [n]^k\\ j_1<\cdots<j_k}}D_{\mathbf{j}}w_{\{j_1,\cdots,j_k\}}=k^{-\frac{k}{p}}\sum_{\substack{\mathbf{j}\in [n]^k\\ j_1<\cdots<j_k}}w_{\{j_1,\cdots,j_k\}}\\
=&\frac1{k!\cdot k^{\frac{k}{p}}}\sum_{\substack{\mathbf{j}\in [n]^k\\ j_1,\cdots,j_k\mbox{ are distinct}}}r_{j_1}\cdots r_{j_k}.
\end{align*}
The assertion follows from Lemma \ref{convergence in distribution lemma} and the Fatou lemma.
\end{proof}

\begin{proof}[Proof of Proposition \ref{prop:constants}] The adjoint of
$\nabla^k\Delta^{-\frac{k}{p}}$ is given by the formula
$$h\to \Delta^{-\frac{k}{p}}\sum_{\mathbf{j}\in [n]^{k}}D_{\mathbf{j}}h_{\mathbf{j}}.$$
Set
$$h_{\mathbf{j}}=
\begin{cases}
w_{\{j_1,\cdots,j_k\}},&j_1<\cdots<j_k,\\
0,&\mbox{ otherwise}
 \end{cases}$$
Obviously, $\sum_{\mathbf{j}\in [n]^k}|h_{\mathbf{j}}|^2=\binom{n}{k}.$ By Lemma \ref{hermite gauss lemma},
$$\liminf_{n\to\infty}n^{-\frac{k}{2}}\|\Delta^{-\frac{k}{p}}\sum_{\mathbf{j}\in [n]^{k}}D_{\mathbf{j}}h_{\mathbf{j}}\|_{L_q(\Omega_n)}\geq \frac1{k!\cdot k^{\frac{k}{p}}}\|H_k(G)\|_q.$$
Noting that $\|H_k(G)\|_q\geq c_kq^{\frac{k}{2}},$ we complete the proof.
\end{proof}


\section{Applications of the sharp fractional Riesz estimates}\label{sec:applications}

The endpoint nature of the fractional Riesz estimate proved in Theorem \ref{thm:main} becomes particularly useful when it is combined with spectral information for the number operator $\Delta$.  The basic idea is simple but useful: analytic or polynomial estimates are first obtained for a fractional power of $\Delta$, and the sharp fractional Riesz estimate (i.e., Theorem \ref{thm:main}) then converts them into first-order gradient estimates without any loss in the spectral exponent.  This principle yields  the optimal short-time gradient smoothing of the heat semigroup \cite{E-I2020} and a logarithm-free Bernstein--Markov inequality for low-degree Walsh polynomials \cite{V2024}.  

We begin with the following dimension-free estimate, which was obtained by Eskenazis and Ivanisvili \cite{E-I2020} in their analysis of polynomial inequalities on the hypercube. From the present perspective, its short-time component is an immediate consequence of Theorem \ref{thm:main}. More importantly, the optimality argument below shows that the power $t^{-1/p}$ is not merely consistent with the sharp fractional Riesz estimate, but is in fact forced by the same endpoint threshold.

\begin{thm}[Eskenazis-Ivanisvili]\label{thm:heat-smoothing}
Let $1<p<2.$ There exists $A_p,B_p>0,$ such that 
\begin{equation}\label{eq:heat-smoothing}
\|\nabla e^{-t\Delta}\|_{L_p(\Omega_{n})\to L_p(\Omega_{n};\ell_2)}
\leq
\begin{cases}
A_pt^{-\frac1p},&0<t\leq1,\\
B_pe^{-t},&t\geq1.
\end{cases}
\end{equation}
The exponent $\frac1p$ is optimal in the following sense: if there exists constant $C_p$ such that
$$\|\nabla e^{-t\Delta}\|_{L_p(\Omega_{n})\to L_p(\Omega_{n};\ell_2)}\leq C_pt^{-\gamma},\quad t\in(0,1],$$
then $\gamma\geq\frac1p.$
\end{thm}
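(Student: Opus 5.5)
Both bounds in \eqref{eq:heat-smoothing} will come from factoring $\nabla e^{-t\Delta}$ through Theorem \ref{thm:main}; the optimality claim will come from testing on the function $F_n$ already used in the introduction.

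\emph{Short time.} For $0<t\le 1$ write
\[
\nabla e^{-t\Delta}=\bigl(\nabla\Delta^{-\frac1p}\bigr)\circ\bigl(\Delta^{\frac1p}e^{-t\Delta}\bigr).
\]
By Theorem \ref{thm:main} the first factor has norm at most $c_{\rm abs}(p-1)^{-2}$, uniformly in $n$. It then suffices to show $\|\Delta^{\frac1p}e^{-t\Delta}\|_{L_p\circlearrowleft}\lesssim_p t^{-1/p}$ uniformly in $n$. For this I use the subordination formula from the proof of the $\|\Delta^\gamma\|$ lemma with $\gamma=\frac1p\in(\frac12,1)$:
\[
\Delta^{\gamma}e^{-t\Delta}=c_\gamma\int_0^\infty\bigl(e^{-t\Delta}-e^{-(t+s)\Delta}\bigr)\frac{ds}{s^{1+\gamma}}.
\]
Split the integral at $s=t$.
\begin{itemize}
\item For $s\ge t$, the integrand has norm at most $2$, since the heat semigroup is contractive. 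This part contributes $\lesssim t^{-\gamma}$.
\item For $s<t$, write $e^{-t\Delta}-e^{-(t+s)\Delta}=\int_t^{t+s}\Delta e^{-u\Delta}\,du$. I then need the analyticity bound $\|\Delta e^{-u\Delta}\|_{L_p\circlearrowleft}\le C_p/u$, with $C_p$ independent of $n$. This holds because $e^{-t\Delta}$ is a symmetric Markov semigroup, hence analytic on $L_p$ for $1<p<\infty$ (Stein's theorem). This part then contributes $\lesssim\int_0^t (s/t)\,s^{-1-\gamma}\,ds\sim t^{-\gamma}$.
\end{itemize}
This yields $A_p\lesssim (p-1)^{-2}C_p$.

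\emph{Long time.} For $t\ge1$ factor $e^{-t\Delta}=e^{-\Delta/2}\,e^{-(t-\frac12)\Delta}$ and use the short-time bound on $\nabla e^{-\Delta/2}$. The exponential decay $e^{-t}$ requires care, because contractivity alone only gives a bounded factor. Since $\nabla$ annihilates constants, we may replace $e^{-(t-\frac12)\Delta}$ by its restriction to mean-zero functions. I then need a spectral-gap decay on $L_p$ mean-zero functions, $\|e^{-s\Delta}(f-\mathbb Ef)\|_p\lesssim_p e^{-s}\|f\|_p$, uniformly in $n$. This is the step I expect to be the main obstacle. My first approach is to interpolate the exact $L_2$ decay $e^{-s}$ against $L_1$ contractivity, which yields only $e^{-cs}$. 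To obtain the exact rate, I would instead use hypercontractivity combined with the $L_2$ gap, or Riesz–Thorin between $L_2$ and $L_{p_0}$ for $p_0$ close to $1$ via a Bonami–Beckner argument. Alternatively, I would accept the constant $B_p$ absorbing a polynomial loss, which still fits the shape of \eqref{eq:heat-smoothing}.

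\emph{Optimality.} Test on $F_n=\1_{\{(1,\dots,1)\}}$ with $t=1/n$. The introduction gives $\|\nabla F_n\|_p\gtrsim_p n^{1/p}\|F_n\|_p$. It also gives $\|(I-e^{-t\Delta})F_n\|_p\le\|\Delta\|\,t\,\|F_n\|_p\le \|F_n\|_p$ when $t=1/n$, and indeed smaller than $\frac12\|F_n\|_p$ once $t=1/(2n)$, since $\|I-e^{-t\Delta}\|\le e^{tn}-1$. Hence $\|\nabla e^{-t\Delta}F_n\|_p\ge\|\nabla F_n\|_p-\|\nabla(I-e^{-t\Delta})F_n\|_p$. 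For the subtracted term I use $\|\nabla g\|_{L_p(\ell_2)}\le\|(\sum_j|D_jg|^2)^{1/2}\|_p$, bounded via Theorem \ref{thm:main} and the lemma $\|\Delta^{1/p}\|\le 3n^{1/p}$; a cruder route is to compare directly on $F_n$, whose $e^{-t\Delta}$-image is explicit as a product function. This gives $\|\nabla e^{-t\Delta}F_n\|_p\gtrsim_p n^{1/p}\|F_n\|_p=c\,t^{-1/p}\|F_n\|_p$. Letting $n\to\infty$, so that $t\to0$, forces $\gamma\ge\frac1p$.
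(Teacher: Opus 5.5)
Your bounds for $0<t\le1$ and $t\ge1$ follow the paper's proof.

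\emph{Short time.} You factor through Theorem \ref{thm:main} and get $\|\Delta^{1/p}e^{-t\Delta}\|_{L_p\circlearrowleft}\lesssim_p t^{-1/p}$ from Stein's analyticity theorem. Your subordination integral split at $s=t$ is a clean, fully rigorous substitute for the paper's moment-inequality step.

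\emph{Long time.} You use the mean-zero decay $\|e^{-s\Delta}g\|_p\le(p-1)^{-1/2}e^{-s}\|g\|_p$. The paper proves this exactly by your first suggestion: hypercontractivity $\|e^{-s_p\Delta}\|_{L_p\to L_2}\le1$ with $s_p=\frac12\log\frac1{p-1}$, followed by the $L_2$ spectral gap. Discard the fallback, though. Plain Riesz--Thorin against $L_1$ or a fixed $L_{p_0}$ only yields $e^{-\theta t}$ with $\theta<1$. That is an exponential loss, not a polynomial one, and no constant $B_p$ can absorb it.

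\emph{Optimality.} Here your route genuinely differs from the paper's.
\begin{itemize}
\item The paper argues by contradiction. It integrates a hypothetical bound $C_pt^{-\gamma}$ against $t^{\alpha-1}$, with the $t\ge1$ bound controlling the tail. This makes $\nabla\Delta^{-\alpha}$ dimension-free bounded for some $\alpha<\frac1p$, contradicting Subsection \ref{optimality on exponent}. Its payoff is to show the heat exponent is forced by the Riesz threshold itself.
\item Your test on $F_n$ instead gives an explicit lower bound $\|\nabla e^{-t\Delta}\|\gtrsim_p t^{-1/p}$ along $t\asymp1/n$. Your ``explicit product'' version is entirely elementary. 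Take $t=1/n$ and the point whose only coordinate equal to $-1$ is the $j$-th. There
\[
|D_je^{-t\Delta}F_n|=\tfrac12e^{-t}\bigl(\tfrac{1+e^{-t}}{2}\bigr)^{n-1}\ge\tfrac1{4e}.
\]
These $n$ points give $\|\nabla e^{-t\Delta}F_n\|_p\ge\frac1{4e}n^{1/p}\|F_n\|_p$, without using Theorem \ref{thm:main} at all.
\end{itemize}

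Your triangle-inequality version, however, does not close with $t=1/(2n)$. The error term is only bounded by $3c_{\rm abs}(p-1)^{-2}n^{1/p}\|(I-e^{-t\Delta})F_n\|_p$, while the main term satisfies $\|\nabla F_n\|_p\ge\frac12n^{1/p}\|F_n\|_p$. So you need $tn\lesssim(p-1)^2$, i.e.\ take $t=\epsilon_p/n$ with $\epsilon_p\asymp(p-1)^2$. Also use $\|I-e^{-t\Delta}\|\le tn$ rather than $e^{tn}-1$, since $e^{1/2}-1>\frac12$. With these changes the argument is correct.
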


Before proving Theorem \ref{thm:heat-smoothing}, we recall the standard analytic-semigroup estimate that supplies the fractional smoothing of $e^{-t\Delta}$ (see \cite{CarbonaroDragicevic,Cowling1983}). For $1<p<\infty$ and $1\geq \alpha>0$,
\begin{equation}\label{eq:semigroup-functional-calculus}
\|\Delta^{\alpha}e^{-t\Delta}\|_{L_p(\Omega_{n})\circlearrowleft}
 \leq M_{p,\alpha}t^{-\alpha},\quad t>0,
\end{equation}
Indeed, since $\{e^{-t\Delta}\}_{t\geq 0}$ is a symmetric diffusion semigroup on $\Omega_{n}$. Hence, by Stein's analyticity theorem for symmetric diffusion semigroups \cite[Chapter~III, \S2, Theorem 1, p. 67]{Stein1970}, it extends to an analytic contraction semigroup on $L_{p}(\Omega_{n})$ for every $1<p<\infty$. By the Cauchy formula, it follows that for any $f$, $g\in L_{p}(\Omega_n)$ with $\|f\|_{L_{p}(\Omega_{n})}=\|g\|_{L_{\frac{p}{p-1}}(\Omega_{n})}=1$, then we have
\begin{align*}
\left|\phi_{g,f}^{\prime}(t)\right|&=\left|\frac{1}{2\pi i}\int_{\gamma(t)}\frac{\phi_{g,f}(z)}{(z-t)^{2}}~dz\right|\\
&\leq \frac{1}{2\pi}\int_{0}^{2\pi}\frac{\sup_{z\in \Sigma_{\theta}}\|e^{-z\Delta}\|_{L_{p}(\Omega_{n})\circlearrowleft}}{t|\sin(\theta)|}~d\theta\\
&=\frac{\sup_{z\in \Sigma_{\theta}}\|e^{-z\Delta}\|_{L_{p}(\Omega_{n})\circlearrowleft}}{t},
\end{align*}
where $\phi_{g,f}(z)=\langle g, e^{-z\Delta}f\rangle$ and $\gamma(t)$ stands for the circle of radius $t\sin(\theta)$ with center $t>0$ for some $\theta_{p}\in (0,\frac{\pi}{2})$. This verifies that for all $t>0$
\begin{equation}\label{usual power}
\left\|\Delta e^{-t\Delta}\right\|_{L_{p}(\Omega_{n})\circlearrowleft}\leq \frac{\sup_{z\in \Sigma_{\theta}}\|e^{-z\Delta}\|_{L_{p}(\Omega_{n})\circlearrowleft}}{t}.
\end{equation}
Thus, applying \eqref{usual power} we have
\begin{align*}
\left\|\Delta^{\alpha}e^{-t\Delta}(f)\right\|_{L_{p}(\Omega_{n})\circlearrowleft}&\leq \left\|e^{-t\Delta}(f)\right\|^{1-\alpha}_{L_{p}(\Omega_{n})\circlearrowleft}\left\|\Delta e^{-t\Delta}(f)\right\|^{\alpha}_{L_{p}(\Omega_{n})\circlearrowleft}\\
&\leq \sup_{z\in \Sigma_{\theta}}\|e^{-z\Delta}\|_{L_{p}(\Omega_{n})\circlearrowleft}^{\alpha}t^{-\alpha}\|f\|_{L_{p}(\Omega_{n})\circlearrowleft}.
\end{align*}

\begin{lem}\label{exponential decay lemma} For every mean zero function $g,$ we have
$$\|e^{-t\Delta}g\|_{L_p(\Omega_{n})}\leq (p-1)^{-\frac12}e^{-t}\|g\|_{L_p(\Omega_{n})},\quad t>0,$$
and
$$\|\nabla e^{-t\Delta}g\|_{L_p(\Omega_{n};\ell_2)}\leq c_{{\rm abs}}(p-1)^{-\frac52}\|\Delta^{\frac1p}e^{-\Delta}\|_{L_p(\Omega_{n})\circlearrowleft}\cdot e^{1-t}\|g\|_{L_p(\Omega_{n})},\quad t\geq 1.$$
\end{lem}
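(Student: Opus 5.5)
For the first inequality I would use hypercontractivity of the heat semigroup together with the spectral gap. Since $g$ has mean zero, $\widehat g(\emptyset)=0$, so $e^{-t\Delta}g=e^{-s\Delta}e^{-(t-s)\Delta}g$ with $e^{-(t-s)\Delta}g$ still of mean zero. Bonami--Beckner hypercontractivity gives $\|e^{-s\Delta}\|_{L_p\to L_2}\leq 1$ as soon as $e^{-2s}\leq p-1$. Take $s=\frac12\log\frac{1}{p-1}$. In $L_2$ we have $\|e^{-u\Delta}h\|_{L_2}\leq e^{-u}\|h\|_{L_2}$ for mean zero $h$. Combining, $\|e^{-t\Delta}g\|_{L_2}\leq e^{-(t-s)}\|g\|_{L_p}$ for $t\geq s$. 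Since $p\leq 2$, $\|\cdot\|_{L_p}\leq\|\cdot\|_{L_2}$ on a probability space, so $\|e^{-t\Delta}g\|_{L_p}\leq e^{s}e^{-t}\|g\|_{L_p}=(p-1)^{-1/2}e^{-t}\|g\|_{L_p}$.

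For $t<s$ the inequality is trivial, because $e^{-t\Delta}$ is a contraction and $(p-1)^{-1/2}e^{-t}\geq (p-1)^{-1/2}e^{-s}=1$. Strictly speaking hypercontractivity is needed here only in the direction $L_p\to L_2$ with $p<2$, which is the standard form $\|e^{-s\Delta}\|_{L_p\to L_2}\le1$ for $e^{-2s}\le p-1$.

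For the second inequality, write $e^{-t\Delta}g=e^{-\Delta}e^{-(t-1)\Delta}g$ for $t\geq1$, and set $h=e^{-(t-1)\Delta}g$, which has mean zero. Then
\[
\nabla e^{-t\Delta}g=\nabla\Delta^{-\frac1p}\big(\Delta^{\frac1p}e^{-\Delta}h\big),
\]
and $\Delta^{-1/p}$ is legitimate since everything has mean zero. Theorem \ref{thm:main} bounds the $L_p(\ell_2)$ norm by $c_{\rm abs}(p-1)^{-2}\|\Delta^{1/p}e^{-\Delta}\|_{L_p\circlearrowleft}\|h\|_{L_p}$. Applying the first part to $h$ gives $\|h\|_{L_p}\leq(p-1)^{-1/2}e^{-(t-1)}\|g\|_{L_p}$. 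Multiplying the bounds yields the factor $(p-1)^{-5/2}e^{1-t}$.

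The only delicate point is the hypercontractive choice of $s$ and checking that the mean-zero condition is preserved. There is no real obstacle; the finiteness of $\|\Delta^{1/p}e^{-\Delta}\|$ is kept as a factor in the statement, and it is dimension-free by \eqref{eq:semigroup-functional-calculus}.
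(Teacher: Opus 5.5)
Your proof is correct and follows the paper's argument essentially verbatim. For $t\ge s_p=\frac12\log\frac1{p-1}$ you combine hypercontractivity $\|e^{-s_p\Delta}\|_{L_p\to L_2}\le 1$ with the $L_2$ spectral gap, for $t<s_p$ you use the contraction bound, and for the second inequality you apply Theorem~\ref{thm:main} together with the first part applied to $e^{-(t-1)\Delta}g$. One small point of ordering: the factorization should read $e^{-(t-s)\Delta}\bigl(e^{-s\Delta}g\bigr)$, so that the spectral gap is applied in $L_2$ to the mean-zero function $e^{-s\Delta}g$. Since the operators commute, this changes nothing.
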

\begin{proof} Let
$s_p=\frac12\log(\frac{1}{p-1}).$  By the hypercontractivity of the semigroup (see for instance \cite{Beckner1975,Bonami1970,Ne1973}) we obtain
$$
\|e^{-s_p\Delta}\|_{L_p(\Omega_{n})\to L_2(\Omega_{n})}\leq1.
$$
Clearly, ${\rm spec}(\Delta)\subset\mathbb{Z}_+$ and the eigenspace corresponding to the eigenvalue $0$ consists of constant functions. If $\mathbb{E} g=0$ and $t\geq s_p,$ then spectral gap yields
$$\|e^{-t\Delta}g\|_{L_p(\Omega_{n})}\leq\|e^{-t\Delta}g\|_{L_2(\Omega_{n})}\leq e^{-(t-s_p)}\|e^{-s_p\Delta}g\|_{L_2(\Omega_{n})}\leq e^{-(t-s_p)}\|g\|_{L_p(\Omega_{n})}.$$
On the other hand, if $t\leq s_p,$ then
$$\|e^{-t\Delta}g\|_{L_p(\Omega_{n})}\leq\|g\|_{L_p(\Omega_{n})}\leq e^{-(t-s_p)}\|g\|_{L_p(\Omega_{n})}.$$
Thus, for every $t\geq0,$
$$\|e^{-t\Delta}g\|_{L_p(\Omega_{n})}\leq e^{-(t-s_p)}\|g\|_{L_p(\Omega_{n})}=(p-1)^{-\frac12}e^{-t}\|g\|_{L_p(\Omega_{n})}.$$
This yields the first inequality.

Hence, by Theorem \ref{thm:main}, it follows that
\begin{align*}
&\|\nabla e^{-t\Delta}g\|_{L_p(\Omega_{n};\ell_2)}\leq c_{{\rm abs}}(p-1)^{-2}\|\Delta^{\frac1p}e^{-t\Delta}g\|_{L_p(\Omega_{n})}\\
\leq& c_{{\rm abs}}(p-1)^{-2}\|\Delta^{\frac1p}e^{-\Delta}\|_{L_p(\Omega_{n})\circlearrowleft}\|e^{-(t-1)\Delta}g\|_{L_p(\Omega_{n})}\\
\leq & c_{{\rm abs}}(p-1)^{-\frac52}\|\Delta^{\frac1p}e^{-\Delta}\|_{L_p(\Omega_{n})\circlearrowleft}\cdot e^{1-t}\|g\|_{L_p(\Omega_{n})}
\end{align*}
for all $t\geq 1$. This is exactly the second inequality.
\end{proof}

\begin{proof}[Proof of Theorem \ref{thm:heat-smoothing}] The estimate for $t\geq 1$ follows from the second assertion in Lemma \ref{exponential decay lemma} (which, obviously, holds also for not necessarily mean zero function $g$).

At the same time, for $0<t\leq 1$ we have
$$
\|\nabla e^{-t\Delta}g\|_{L_p(\Omega_{n};\ell_2)}\leq c_{{\rm abs}}(p-1)^{-2}\|\Delta^{\frac1p}e^{-t\Delta}g\|_{L_p(\Omega_{n})}\leq A_pt^{-\frac1p}\|g\|_{L_p(\Omega_{n})}.
$$

We now turn to show that the $\frac{1}{p}$ appears in \eqref{eq:heat-smoothing} is optimal. Suppose the power bound holds with exponent $\gamma<\frac1p.$ Choose $\alpha\in(\gamma,\frac1p)$ and using the formula
$$\nabla\Delta^{-\alpha}=\frac{1}{\Gamma(\alpha)}\int_0^{\infty}t^{\alpha-1}\nabla e^{-t\Delta}dt,$$
we obtain
\begin{align*}
&\|\nabla\Delta^{-\alpha}\|_{L_p(\Omega_{n};\ell_2)}\\
\leq& \frac{1}{\Gamma(\alpha)}\int_0^{\infty}t^{\alpha-1}\|\nabla e^{-t\Delta}g\|_{L_p(\Omega_{n};\ell_2)}dt\\
=&\frac{1}{\Gamma(\alpha)}\int_0^1t^{\alpha-1}\|\nabla e^{-t\Delta}g\|_{L_p(\Omega_{n};\ell_2)}dt+\frac{1}{\Gamma(\alpha)}\int_1^{\infty}t^{\alpha-1}\|\nabla e^{-t\Delta}g\|_{L_p(\Omega_{n};\ell_22)}dt\\
\leq &\frac{C_p}{\Gamma(\alpha)}\int_0^1t^{\alpha-\gamma-1}dt\cdot\|g\|_{L_p(\Omega_{n})}+\frac{B_p}{\Gamma(\alpha)}\int_1^{\infty}t^{\alpha-1}e^{-t}dt\cdot\|g\|_{L_p(\Omega_{n})}\\
\leq &\left(\frac{C_p}{\Gamma(\alpha)(\alpha-\gamma)}+B_p\right)\cdot\|g\|_{L_p(\Omega_{n})}.
\end{align*}
Thus, we established boundedness of the operator $\nabla\Delta^{-\alpha}$ on $L_p(\Omega_{n}).$ Using the optimality result established in Subsection \ref{optimality on exponent}, we conclude that $\alpha\geq\frac1p$. This contradicts the choice of $\alpha.$
\end{proof}

We next turn to polynomial inequalities. For each $n\in \mathbb{N}$ and $d\in [n]$, we define
\[
\mathcal{P}^{\leq d}(\Omega_{n})=\{f:\Omega_{n}\to \mathbb{C}:\widehat{f}(A)=0,~\mbox{if }|A|>d\},
\]
\[
\mathcal{P}^{<d}(\Omega_{n})=\{f:\Omega_{n}\to \mathbb{C}:\widehat{f}(A)=0,~\mbox{if }|A|\geq d\},
\]
and
\[
\mathcal{T}^{\geq d}(\Omega_{n})=\{f:\Omega_{n}\to \mathbb{C}: \widehat{f}(A)=0,~\mbox{if }|A|< d\}.
\]
On the hypercube, the Walsh degree plays the role of the algebraic degree in classical Bernstein--Markov theory.  The problem is closely connected, at the level of spectral calculus, with the heat-smoothing conjecture of Mendel and Naor\cite{M-N2014}.  Their conjecture concerns the opposite spectral regime: for functions whose Walsh spectrum is supported on levels at least $d$, one seeks dimension-free exponential decay of the form
\[
\|e^{-t\Delta}f\|_{L_{p}(\Omega_{n})}\le C_p e^{-c_pdt}\|f\|_{L_{p}(\Omega_{n})},
\]
for $f\in \mathcal{T}^{\geq d}(\Omega_{n})$. A weaker formulation at the generator level is the reverse Bernstein--Markov estimate for $\Delta$ on tail spaces. Eskenazis and Ivanisvili \cite{E-I2020} explored the heat smoothing conjecture and the Bernstein--Markov estimate in detail, drawing on duality, approximation theory, and complex analysis. The connection relevant here concerns how the heat semigroup interacts with a spectral cutoff. In particular, Eskenazis and Ivanisvili \cite[Theorem 14]{E-I2020} proved the following improved Bernstein--Markov type inequality for the gradient of functions.

\begin{prop}[Eskenazis-Ivanisvili]
For $p\in(1,\infty)$, let $\theta_{p}=2\arcsin\left(\frac{1\sqrt{p-1}}{p}\right)$. Then there exists a constant $C_{p}>0$ such that for every $n\in \mathbb{N}$ and $d\in [n]$, the following inequality holds
\begin{equation}\label{additional log term}
\|\nabla f\|_{L_{p}(\Omega_{n};\ell^{n}_{2})}\leq C_{p}d^{\frac{2}{p}-\frac{\theta_{p}}{p\pi}}\log(d+1)\|f\|_{L_{p}(\Omega_{n})},\qquad 1<p\leq 2,
\end{equation}
and
\[
\|\nabla f\|_{L_{p}(\Omega_{n};\ell^{n}_{2})}\leq C_{p}d^{1-\frac{\theta_{p}}{2\pi}}\|f\|_{L_{p}(\Omega_{n})}\qquad 2\leq p<\infty,
\]
where $f\in\mathcal{P}^{\leq d}(\Omega_{n})$.
\end{prop}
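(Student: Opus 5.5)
The statement just above is the Eskenazis--Ivanisvili proposition. In the paper it is a cited result, recalled as the benchmark that the sharp fractional Riesz estimate will improve. So the natural plan is to reconstruct its proof along their lines rather than through Theorem \ref{thm:main}. The two ingredients are a Bernstein--Markov estimate for $\Delta$ (or its fractional powers) on low-degree polynomials, and a conversion from $\Delta$-type bounds to gradient bounds.

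\textbf{Step 1: a complex-time Bernstein estimate.} For $f\in\mathcal P^{\leq d}(\Omega_n)$, I would use that $e^{-z\Delta}$ is contractive on $L_p$ for $z$ in the sector $\{|\arg z|\le \theta_p/2\}$; this is the sharp Weissler-type complex hypercontractivity angle, encoded by $\theta_p$. The function
\[
\phi(w)=\sum_{k\le d} w^k\,\mathrm{Rad}_k f,\qquad \text{where } e^{-z\Delta}f=\phi(e^{-z}),
\]
is a polynomial of degree $d$ in $w=e^{-z}$. It is bounded by $\|f\|_p$ on the image of the sector, a region touching the unit circle at $w=1$ with an opening angle determined by $\theta_p$. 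A Markov/Bernstein inequality for polynomials on such a cusp-like domain, via conformal mapping and harmonic measure, then gives
\[
\|\Delta f\|_p=\|\phi'(1)\|_p\lesssim d^{2-\theta_p/\pi}\|f\|_p .
\]

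\textbf{Step 2: interpolation to fractional powers.} I would interpolate this to $\|\Delta^{\alpha}f\|_p\lesssim d^{\alpha(2-\theta_p/\pi)}\|f\|_p$, using the Stein interpolation of imaginary powers on the degree-$\le d$ subspace. The degree projection does not act boundedly on $L_p$, so I would instead regularize with $e^{-\Delta/d}$-type cutoffs.

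\textbf{Step 3: conversion to the gradient.} For $1<p\le2$ I would use the known non-sharp fractional Riesz bound, namely the $\beta=1/p$ case with the $\log n$ loss, or the $\varepsilon$-loss bound \eqref{eq:epsilon-loss-riesz} with $\varepsilon\sim1/\log d$. Because $f$ has degree $\le d$, the dimension effectively becomes $d$, and choosing $\varepsilon\approx 1/\log(d+1)$ turns $C_p/\varepsilon$ into $\log(d+1)$. Combined with Step 2 at $\alpha=1/p+\varepsilon$, this yields $d^{\frac2p-\frac{\theta_p}{p\pi}}\log(d+1)$, since $d^{\varepsilon\cdot O(1)}=O(1)$. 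For $p\ge2$, Lust-Piquard's estimate \eqref{eq:classical-upper-riesz} with $\alpha=1/2$ gives $d^{1-\theta_p/(2\pi)}$ directly, with no loss.

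\textbf{Main obstacle.} I expect the hardest step to be the polynomial Markov inequality on the sector image with the exact exponent $2-\theta_p/\pi$. The plan there is to map the domain conformally onto a half-plane near $w=1$ and apply a Bernstein-type estimate adapted to the boundary angle.

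With Theorem \ref{thm:main} now available, the logarithm in \eqref{additional log term} should disappear: take $\alpha=1/p$ exactly in Step 2 and apply Theorem \ref{thm:main} in Step 3. This removal of the logarithm is the improvement the paper presumably proves next.
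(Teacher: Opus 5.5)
Your proposal is correct and follows the route the paper describes for this cited result. The paper quotes the Eskenazis--Ivanisvili bound $\|\Delta f\|_p\lesssim d^{2-\theta_p/\pi}\|f\|_p$, which you reconstruct via sectorial contractivity of $e^{-z\Delta}$ and a Markov inequality at a corner of angle $\theta_p$. It then passes to the power $\frac1p+\varepsilon$ and applies the $\varepsilon$-loss Riesz bound with $\varepsilon\asymp 1/\log(d+1)$; for $p\ge 2$ your use of Lust-Piquard at power $\frac12$ is the natural counterpart.

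Two small corrections. First, drop the $\log n$ alternative: a function of degree $\le d$ can depend on all $n$ coordinates, so there is no reduction to dimension $d$, whereas the $\varepsilon$-route involves no dimension at all. Second, Step 2 needs neither a degree projection nor regularization: the dimension-free moment inequality $\|\Delta^{\beta}g\|_p\lesssim_\beta\|\Delta g\|_p^{\beta}\|g\|_p^{1-\beta}$ used in the paper suffices, as would Stein interpolation carried out directly on the invariant subspace $\mathcal{P}^{\leq d}(\Omega_n)$.
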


For $1<p<2$, the logarithm term in \eqref{additional log term} can be removed by the sharp fractional Riesz estimate (Theorem \ref{thm:main}).  Indeed, Eskenazis and Ivanisvili first proved the sharp estimate
\begin{equation}
\label{eq:EI-laplacian-BM}
  \|\Delta f\|_{L_p(\Omega_n)}
  \le 10d^{\alpha_p}\|f\|_{L_p(\Omega_n)},
  \qquad
  \alpha_p=2-\frac{\theta_p}{\pi},
\end{equation}
and then used a supercritical estimate
\begin{equation}
\label{eq:old-supercritical}
  \|\nabla h\|_{L_{p}(\Omega_{n})}
  \lesssim_p \frac1\varepsilon
  \|\Delta^{1/p+\varepsilon}h\|_{L_{p}(\Omega_{n})},
  \qquad \varepsilon>0.
\end{equation}
Combining \eqref{eq:EI-laplacian-BM} and \eqref{eq:EI-laplacian-BM} with interpolation to the order $1/p+\varepsilon$ yields a factor
$d^{\alpha_p(1/p+\varepsilon)}$. Choosing $\varepsilon\asymp1/\log(d+1)$ in this factor yields exactly the logarithmic loss in \eqref{additional log term}.  This observation explains why the endpoint $1/p$ in Theorem~1.1 is the natural input for removing the logarithm

The factor $\log(d+1)$ was subsequently removed by Volberg \cite{V2024} by a different complex analytic method. Hie proof starts from a probabilistic representation of the derivatives of the heat semigroup \cite{IVHV2020}. For $f\in \mathcal{P}^{\leq d}(\Omega_{n})$, writing
\[
 F_f(z,\varepsilon)=
 \sum_{S\subseteq[n]}z^{|S|}
 \widehat f(S)\varepsilon^S,
\]
he then applies complex hypercontractivity and a subharmonic
maximum-principle argument to the function
\[
 z\mapsto
 \log\|\nabla F_f(z,\cdot)\|_{L_p}.
\]
A suitably chosen lens domain and the asymptotic behavior of its Green
function at the corner $z=1$ yield directly
\[
 \|\nabla f\|_{L_{p}(\Omega_{n};\ell^{n}_{2})} \lesssim_p d^{\frac{2}{p}-\frac{\theta_p}{p\pi}}\|f\|_{L_{p}(\Omega_{n})},
\]
thereby avoiding the logarithmic loss. In particular, Volberg's argument bypasses the sharp fractional Riesz estimate altogether.

With the help of Theorem \ref{thm:main}, the same logarithm-free inequality follows by a much shorter proof. In our approach, we first pass from $\Delta$ to $\Delta^{1/p}$ using the scalar Bernstein--Markov estimate, and then pass from $\Delta^{1/p}$ to $\nabla$ using the sharp fractional Riesz estimate.

\begin{prop}[Volberg]\label{prop:log-free-BM}
For $1<p<2$, there exists a constant $C_{p}>0$ such that, for every
$n\in\mathbb N$, every $d\in  [n]$, the following inequality holds
\[
 \|\nabla f\|_{L_{p}(\Omega_n)}
 \leq
 C_{p}
 d^{\frac{2}{p}-\frac{\theta_{p}}{p\pi}}
 \|f\|_{L_{p}(\Omega_n)},
\]
for $f\in \mathcal{P}^{\leq d}(\Omega_{n})$, where $\theta_{p}=2\arcsin\left(\frac{1\sqrt{p-1}}{p}\right)$.
\end{prop}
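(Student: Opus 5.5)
The plan is to chain Theorem \ref{thm:main} with the Eskenazis--Ivanisvili Bernstein--Markov estimate \eqref{eq:EI-laplacian-BM}, bridging the two by an interpolation on the polynomial space. Fix $f\in\mathcal{P}^{\leq d}(\Omega_n)$. Since $\nabla$ kills constants and $\Delta^{1/p}$ commutes with the projection onto mean zero functions, we may assume $\widehat f(\emptyset)=0$. Theorem \ref{thm:main} gives
\[
\|\nabla f\|_{L_p(\Omega_n;\ell_2^n)}\leq c_{\rm abs}(p-1)^{-2}\|\Delta^{1/p}f\|_{L_p(\Omega_n)}.
\]
So it suffices to prove $\|\Delta^{1/p}f\|_{L_p}\lesssim_p d^{\alpha_p/p}\|f\|_{L_p}$ with $\alpha_p=2-\theta_p/\pi$. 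This is the right target, since $\alpha_p/p=\frac2p-\frac{\theta_p}{p\pi}$ is exactly the exponent in the statement.

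To get the fractional Bernstein inequality I would use the moment inequality for the sectorial operator $\Delta$ on $L_p$, in the same spirit as the interpolation step following \eqref{usual power}. Stein's theorem makes $e^{-t\Delta}$ an analytic contraction semigroup on $L_p$, so $\Delta$ is sectorial there and
\[
\|\Delta^{\alpha}g\|_{L_p}\leq C_{p,\alpha}\|g\|_{L_p}^{1-\alpha}\|\Delta g\|_{L_p}^{\alpha},\qquad 0<\alpha<1.
\]
I would derive this concretely. Write $\Delta^{\alpha}g=c_\alpha\int_0^\infty(1-e^{-t\Delta})g\,t^{-1-\alpha}\,dt$ and split the integral at $t=s$. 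On $t\geq s$, bound $\|(1-e^{-t\Delta})g\|\leq 2\|g\|$. On $t\leq s$, bound $\|(1-e^{-t\Delta})g\|=\|\int_0^t e^{-r\Delta}\Delta g\,dr\|\leq t\|\Delta g\|$. This gives $\|\Delta^{\alpha}g\|\lesssim_\alpha s^{-\alpha}\|g\|+s^{1-\alpha}\|\Delta g\|$. Optimizing with $s=\|g\|/\|\Delta g\|$ yields the inequality with a constant depending only on $\alpha$, and only the contractivity of the semigroup is used.

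Next apply this with $g=f$ and $\alpha=1/p$, where $1/p\in(1/2,1)$ because $1<p<2$. Inserting \eqref{eq:EI-laplacian-BM} gives
\[
\|\Delta^{1/p}f\|_{L_p}\lesssim_p\|f\|_{L_p}^{1-1/p}\bigl(10\,d^{\alpha_p}\|f\|_{L_p}\bigr)^{1/p}\lesssim_p d^{\alpha_p/p}\|f\|_{L_p}.
\]
Combining this with the first display proves the proposition, with $C_p\lesssim(p-1)^{-2}$ times the Bernstein constant.

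The main obstacle is only bookkeeping. One must make sure \eqref{eq:EI-laplacian-BM} is available in the stated form for $\mathcal{P}^{\leq d}$ with $d\in[n]$, which we take from \cite{E-I2020}. One must also check that the moment inequality introduces no logarithm. The elementary splitting argument above shows that it does not: the only $d$-dependence enters through $\|\Delta f\|_{L_p}$, raised to the power $1/p$.
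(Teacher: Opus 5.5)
Your proposal is correct and follows the paper's proof: you reduce via Theorem \ref{thm:main} to bounding $\|\Delta^{1/p}f\|_{L_p}$, then combine the moment inequality $\|\Delta^{\beta}g\|_{L_p}\lesssim_\beta\|\Delta g\|_{L_p}^{\beta}\|g\|_{L_p}^{1-\beta}$ at $\beta=1/p$ with the Eskenazis--Ivanisvili bound \eqref{eq:EI-laplacian-BM}. The one addition is your splitting argument via $\Delta^{\alpha}=c_\alpha\int_0^\infty(1-e^{-t\Delta})\,t^{-1-\alpha}\,dt$, which correctly proves the moment inequality (the paper only asserts it) and shows that it needs nothing beyond contractivity of the semigroup on $L_p$.
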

\begin{proof}
Let $ \alpha_{p}=2-\frac{\theta_{p}}{\pi}$ and recall here that the following holds for $f\in \mathcal{P}^{\leq d}(\Omega_{n})$
\[
 \|\Delta f\|_{L_{p}(\Omega_{n})} \leq 10d^{\alpha_{p}}\|f\|_{L_{p}(\Omega_{n})}.
 \tag{1}
 \label{eq:BM-Delta}
\]
Note that
\begin{equation}\label{eq:fractional-moment-BM}
 \|\Delta^{\beta}g\|_{L_{p}(\Omega_{n})} \lesssim_{\beta} \|\Delta g\|_{L_{p}}^{\beta}\|g\|_{L_{p}(\Omega_{n})}^{1-\beta},
\end{equation}
for $\beta\in (0,1)$ and $g:\Omega_{n}\to \mathbb{C}$.  We substitute $\beta=\frac{1}{p}$ in \eqref{eq:fractional-moment-BM} and get
\begin{equation}\label{eq:fractional-Bernstein-BM}
\begin{split}
\|\Delta^{1/p}f\|_{L_{p}(\Omega_{n})}&\lesssim_{p}\|\Delta f\|_{L_{p}(\Omega_{n})}^{1/p}\|f\|_{L_{p}(\Omega_{n})}^{1-1/p}\\
&\leq C_{p} d^{\alpha_{p}/p}\|f\|_{L_{p}(\Omega_{n})}.
\end{split}
\end{equation}
It now remains to apply Theorem \ref{thm:main} to \eqref{eq:fractional-Bernstein-BM} to obtain the desired logarithm free inequality.
\end{proof}

The Bernstein--Markov estimates considered above concern functions whose
Walsh spectrum is bounded from above.  We now conclude this paper with the complementary high-frequency regime of tail spaces, which is the natural setting of the heat-smoothing problem of Mendel and Naor \cite{M-N2014}. Recall here that Ivanisvili and Nazarov \cite{I-N2022} obtained the following heat-smoothing estimate: for each $1<p<\infty$ and $ f\in\mathcal{T}^{\geq d}(\Omega_{n})$, 
\begin{equation}\label{eq:tail-heat-smoothing}
 \|e^{-t\Delta}f\|_{L_{p}(\Omega_{n})}\le C_q\exp\!\left[-c_qd\min\left\{t,t^{1/\vartheta_p}\right\}\right]\|f\|_{L_{p}(\Omega_{n})},
\end{equation}
where $\vartheta_p=1-\frac{2}{\pi}\arctan\!\left(\frac{|p-2|}{2\sqrt{p-1}}\right)$.

The sharp fractional Riesz estimate (i.e., Theorem \ref{thm:main}) allows us to convert this scalar spectral inequality into a first-order estimate for the operator $\nabla\Delta^{-1}$.

\begin{prop}
\label{prop:tail-gradient-potential}
Let $1<p<2$, let $p'=p/(p-1)$.  Then, for every $d\ge1$ and every
$h\in \mathcal{T}^{\geq d}(\Omega_{n})$,
\begin{equation}\label{eq:tail-gradient-potential}
 \|\nabla\Delta^{-1}h\|_{L_p(\Omega_n;\ell_2)}
 \lesssim_p
 d^{-\vartheta_p/p'}
 \|h\|_{L_p(\Omega_n)},
\end{equation}
where $\vartheta_q$ comes from \eqref{eq:tail-heat-smoothing}.
\end{prop}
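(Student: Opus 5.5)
Write $\nabla\Delta^{-1}h=\nabla\Delta^{-1/p}\big(\Delta^{-1/p'}h\big)$, since $1/p+1/p'=1$ and $\Delta^{-1}$ commutes with all fractional powers on the mean-zero space. The plan has two steps. First, Theorem \ref{thm:main} gives
\[
\|\nabla\Delta^{-1}h\|_{L_p(\Omega_n;\ell_2)}\leq c_{\rm abs}(p-1)^{-2}\|\Delta^{-1/p'}h\|_{L_p(\Omega_n)}.
\]
Second, it then suffices to show the scalar tail-space bound
\[
\|\Delta^{-\beta}h\|_{L_p(\Omega_n)}\lesssim_p d^{-\vartheta_p\beta}\|h\|_{L_p(\Omega_n)},\qquad \beta=\tfrac1{p'},\quad h\in\mathcal T^{\geq d}(\Omega_n).
\]

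For this bound I use the subordination formula
\[
\Delta^{-\beta}h=\Gamma(\beta)^{-1}\int_0^\infty t^{\beta-1}e^{-t\Delta}h\,dt,
\]
which is valid because $h$ has no constant term (here $d\geq1$). Minkowski's inequality together with \eqref{eq:tail-heat-smoothing} gives
\[
\|\Delta^{-\beta}h\|_{L_p}\leq \frac{C}{\Gamma(\beta)}\int_0^\infty t^{\beta-1}\exp\!\big[-c\,d\min\{t,t^{1/\vartheta_p}\}\big]dt\;\|h\|_{L_p}.
\]
Note that $\vartheta_p\in(0,1]$, so $t^{1/\vartheta_p}\leq t$ for $t\leq1$, and the minimum equals $t^{1/\vartheta_p}$ on $[0,1]$ and $t$ on $[1,\infty)$.

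The integral splits into two pieces.
\begin{itemize}
\item On $[1,\infty)$ the piece is at most $\int_1^\infty t^{\beta-1}e^{-cdt}\,dt\lesssim e^{-cd}/(cd)$, which is exponentially small in $d$.
\item On $[0,1]$, substitute $s=d\,t^{1/\vartheta_p}$, i.e.\ $t=(s/d)^{\vartheta_p}$. This yields
\[
\int_0^1 t^{\beta-1}e^{-cdt^{1/\vartheta_p}}\,dt\leq \vartheta_p\, d^{-\vartheta_p\beta}\int_0^\infty s^{\vartheta_p\beta-1}e^{-cs}\,ds=\vartheta_p\,\Gamma(\vartheta_p\beta)\,(c^{\vartheta_p\beta}d^{\vartheta_p\beta})^{-1}.
\]
This is finite since $\vartheta_p\beta>0$.
\end{itemize}
Combining the two pieces gives $\|\Delta^{-1/p'}h\|_{L_p}\lesssim_p d^{-\vartheta_p/p'}\|h\|_{L_p}$, and the first display finishes the proof.

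The main subtlety is bookkeeping rather than analysis. The constants $C_q,c_q$ in \eqref{eq:tail-heat-smoothing} must be read as depending on $p$, and they are used on $L_p$ with $p<2$. The subordination formula needs $\Re\beta>0$ and mean zero, and $h\in\mathcal T^{\geq d}$ is preserved by $\Delta^{-\beta}$, so nothing else is needed. The exponent $\vartheta_p/p'$ is exactly what the short-time regime produces, which is why the split occurs at $t=1$.
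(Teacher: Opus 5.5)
Your proposal is correct and follows the paper's proof essentially step for step. You first use Theorem~\ref{thm:main} to reduce to bounding $\|\Delta^{-1/p'}h\|_{L_p}$. You then write $\Delta^{-\beta}$ via the semigroup integral, apply Minkowski's inequality and \eqref{eq:tail-heat-smoothing}, split the integral at $t=1$, use the substitution $s\propto d\,t^{1/\vartheta_p}$ on $[0,1]$, and bound the tail on $[1,\infty)$ by $e^{-cd}/(cd)$, all as the paper does.
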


\begin{proof}
By the sharp fractional Riesz estimate of Theorem \ref{thm:main}, we have
\begin{equation}
 \|\nabla u\|_{L_p(\Omega_n;\ell_2)}
 \lesssim_p
 \|\Delta^{1/p}u\|_{L_p(\Omega_n)},
 \qquad 1<p<2.
 \label{eq:critical-Riesz-tail-proof}
\end{equation}
Applying \eqref{eq:critical-Riesz-tail-proof} to
$u=\Delta^{-1}h$ gives
\begin{equation}\label{eq:gradient-potential-factor}
\begin{split}
\|\nabla\Delta^{-1}h\|_{L_p(\Omega_n;\ell_2)}
 &\leq \frac{c_{\mathrm{abs}}}{(p-1)^{2}}\|\Delta^{1/p}\Delta^{-1}h\|_{L_p(\Omega_n)}\\
 &=\frac{c_{\mathrm{abs}}}{(p-1)^{2}}\|\Delta^{-1/p'}h\|_{L_p(\Omega_n)},
\end{split}
\end{equation}
where we used $\frac{1}{p^{\prime}}=1-\frac{1}{p}$. Thus the problem is reduced exactly to a negative fractional-power estimate on the tail space.

We now put $\beta=\frac1{p^{\prime}}$. By Minkowski's inequality and
\eqref{eq:tail-heat-smoothing},
\begin{align}
 \|\Delta^{-\beta}h\|_{L_{p}(\Omega_{n})}
 &\le
 \frac{A_p}{\Gamma(\beta)}
 \|h\|_{L_{p}(\Omega_{n})}
 \int_0^\infty
 t^{\beta-1}
 \exp\!\left[
   -a_pd\min\{t,t^{1/\vartheta_p}\}
 \right]dt.
 \label{eq:negative-tail-integral}
\end{align}
We split the integral at $t=1$.  Since $\vartheta_p\le1$, for $0<t\le1$ we have $\min\{t,t^{1/\vartheta_p}\}=t^{1/\vartheta_p}$. Therefore, after the change of variables
$u=a_pd\,t^{1/\vartheta_p}$,
\begin{align}
 \int_0^1
 t^{\beta-1}e^{-a_pd t^{1/\vartheta_p}}\,dt
 &=
 \vartheta_p
 (a_pd)^{-\beta\vartheta_p}
 \int_0^{a_pd}
 u^{\beta\vartheta_p-1}e^{-u}\,du
 \notag\\
 &\le
 \vartheta_p
 \Gamma(\beta\vartheta_p)
 a_p^{-\beta\vartheta_p}
 d^{-\beta\vartheta_p}.
 \label{eq:small-time-tail-integral}
\end{align}
For $t\ge1$ the minimum equals $t$.  Since
$0<\beta<1$, we have $t^{\beta-1}\le1$ on $[1,\infty)$, and hence
\begin{align}
 \int_1^\infty
 t^{\beta-1}e^{-a_pd t}\,dt
 &\le
 \int_1^\infty e^{-a_pd t}\,dt
 =
 \frac{e^{-a_pd}}{a_pd}
 \lesssim_p
 d^{-\beta\vartheta_p}.
 \label{eq:large-time-tail-integral}
\end{align}
Combining
\eqref{eq:negative-tail-integral}--\eqref{eq:large-time-tail-integral}
gives
\begin{equation}
 \|\Delta^{-\beta}h\|_{L_{p}(\Omega_{n})}
 \lesssim_{p,\beta}
 d^{-\beta\vartheta_p}\|h\|_{L_{p}(\Omega_{n})}.
 \label{eq:negative-tail-beta}
\end{equation}
Recalling $\beta=1/p'$ and inserting \eqref{eq:negative-tail-beta} into
\eqref{eq:gradient-potential-factor} yields
\[
 \|\nabla\Delta^{-1}h\|_{L_p(\Omega_n;\ell_2)}
 \lesssim_p
 d^{-\vartheta_p/p'}\|h\|_{L_p(\Omega_n)},
\]
which proves \eqref{eq:tail-gradient-potential}.
\end{proof}

\appendix

\section{Complex interpolation between BMO and Hilbert spaces}\label{sec:product-interpolation}

We here provide the proof for Theorem \ref{thm:product-interpolation}. 
For a coefficient family $a=(a_{I,J})_{I,J\in\mathcal{D}_n} \subset \mathbb{C}$ define its discrete product tent square function by
$$S(a)^2=\sum_{I,J\in\mathcal{D}_n}|a_{I,J}|^2(h_I^2\otimes h_J^2).$$
For $1\leq r<\infty$, define
\begin{equation*}
\|a\|_{\T_u}=\|S(a)\|_{L_u(\Omega_n\times\Omega_n)}.
\end{equation*}
At the endpoint $r=\infty$, we define
$$\|a\|_{\T_{\infty}}^2=\sup_{\emptyset\neq U\subset \Omega_n\times\Omega_n}
 \frac1{(\mu_n\otimes\mu_n)(U)}\sum_{\substack{I,J\in\mathcal{D}_n\\ I\times J\subset U}}|a_{I,J}|^2.$$
We call $\T_u$ the \emph{tent sequence spaces}.
For $r=2$, the tent norm has the particularly simple form
\begin{equation*}
\|a\|_{\T_2}^{2}=\sum_{I,J\in\mathcal{D}_n}|a_{I,J}|^2.
\end{equation*}

We now identify the product tent sequence spaces exactly with the Haar
coefficient representation of the relevant function spaces. 
Given $F\in L_1(\Omega_n\times\Omega_n)$, define the coefficient map
\begin{equation*}
\mathcal{C}:F\to \left(
\langle F,h_I\otimes h_J\rangle
\right)_{I,J\in\mathcal{D}_n}.
\end{equation*}

\begin{lem}\label{lem:haar-tent}
The coefficient map is an isometric bijection between $L_2(\Omega_n\times\Omega_n)$ and $\T_2$ (respectively, between ${\rm BMO}(\Omega_n\times\Omega_n)$ and $\T_{\infty}$).
\end{lem}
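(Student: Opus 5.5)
The plan is to deduce both assertions of Lemma~\ref{lem:haar-tent} from Lemma~\ref{orthonormal basis lemma} together with a direct comparison of the definitions. The coefficient map $\mathcal{C}$ sends $F$ to its Haar coefficients with respect to the product Haar system $\{h_I\otimes h_J\}_{I,J\in\mathcal{D}_n}$.

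\emph{The $L_2$ case.} By Lemma~\ref{orthonormal basis lemma}, the family $\{h_R\}_{R\in\mathcal{R}_n}$ is an orthonormal basis of $L_2(\Omega_n\times\Omega_n)$. Parseval's identity then gives
\[
\|F\|_{L_2}^2=\sum_{I,J}|\langle F,h_I\otimes h_J\rangle|^2=\|\mathcal{C}F\|_{\T_2}^2 .
\]
This shows that $\mathcal{C}$ is an isometry. It is surjective because every finite coefficient family $a$ is realized by $F=\sum a_{I,J}h_I\otimes h_J$. Here everything is finite-dimensional, since $\mathcal{D}_n$ is finite. I would also check once that $\|a\|_{\T_2}^2=\int S(a)^2$ reduces to $\sum|a_{I,J}|^2$, using $\|h_I\|_2=1$.

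\emph{The BMO case.} Unwinding the definition of $P_U$, we get
\[
\|F\|_{\BMO}^2=\sup_U |U|^{-1}\sum_{I\times J\subset U}|\langle F,h_I\otimes h_J\rangle|^2 .
\]
I would compare this with $\|\mathcal{C}F\|_{\T_\infty}^2$, which is the same supremum with $|U|$ replaced by $(\mu_n\otimes\mu_n)(U)$.

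The main point to settle is the normalization of $|U|$. I would read $|U|$ in the BMO definition as the measure $(\mu_n\otimes\mu_n)(U)$. This is the standard normalization in Bernard's product BMO, and it is consistent with the proof of Theorem~\ref{thm:right}: there $|U|$ is described as a cardinality, but $|U|2^{-2n}$ is exactly the measure. With this reading the two suprema coincide term by term, so $\mathcal{C}$ is isometric. If instead the literal counting normalization were intended, the two norms would differ by the fixed factor $2^{n}$. In that case I would state the identification with $\T_\infty$ defined using the matching normalization.

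Two degenerate points also need care. First, rectangles involving $\emptyset\in\mathcal{D}_n$, which corresponds to the constant Haar function $h_\emptyset=1$, are included in both sums under the same convention. Second, $\BMO$ is a genuine norm on the finite-dimensional coefficient space, because $U=\Omega_n\times\Omega_n$ captures all coefficients. Bijectivity then follows exactly as in the $L_2$ case.
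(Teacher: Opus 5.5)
Your proposal is correct and follows the paper's proof. The $L_2$ case is Parseval's identity for the orthonormal product Haar basis of Lemma~\ref{orthonormal basis lemma}, and the BMO case is a term-by-term comparison of the two suprema, which the paper simply calls obvious. Your observation that $|U|$ in the BMO definition must be read as $(\mu_n\otimes\mu_n)(U)$, not as a cardinality, matches the normalization the paper implicitly relies on; the last step of the proof of Theorem~\ref{thm:right} only works under this reading.
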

\begin{proof} The first assertion follows from the fact that $\{h_I\otimes h_J\}_{I,J\in\mathcal{D}_n}$ is an orthonormal basis in $L_2(\Omega_n\times\Omega_n)$ (see Lemma \ref{orthonormal basis lemma}). The second assertion is obvious.
\end{proof}

Recall that, in Section \ref{sec:preliminaries}, the {\it increasing} filtration $\{\mathcal{F}_k\}_{k=0}^n$ are defined  by setting $\mathcal{F}_0=\{\Omega_{n},\emptyset\}$, and 
$$\mathcal{F}_k=\sigma(\mathbb{D}_k), \quad 1\leq k\leq n.$$
Let $E_{\mathcal{F}_k}$ be the conditional expectation with respect to $\mathcal{F}_{k}$.

The following lemma is standard.
\begin{lem}\label{conditional expectation of haar} If $I\in\mathbb{D}_k$ for $0\leq k\leq n-1,$ then $h_I\in\mathcal{F}_{k+1}$ and $E_{\mathcal{F}_k}h_I=0.$ 
\end{lem}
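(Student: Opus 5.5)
The statement to prove is Lemma \ref{conditional expectation of haar}: for $I\in\mathbb{D}_k$, $h_I$ is $\mathcal{F}_{k+1}$-measurable and $E_{\mathcal{F}_k}h_I=0$. The plan is to use the explicit form $h_I=2^{\frac k2}(\1_{I^+}-\1_{I^-})$ and the fact that $\mathcal{F}_k$ and $\mathcal{F}_{k+1}$ are generated by the finite partitions $\mathbb{D}_k$ and $\mathbb{D}_{k+1}$.

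\emph{Measurability.} By definition $I^{\pm}=\{a_I\}\times\{\pm1\}\times\Omega_{n-k-1}$ are atoms of $\mathbb{D}_{k+1}$. Hence $\1_{I^+}$ and $\1_{I^-}$ are $\mathcal{F}_{k+1}$-measurable. So is their linear combination $h_I$.

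\emph{Conditional expectation.} Since $\mathcal{F}_k$ is generated by the finite partition $\mathbb{D}_k$ into atoms of positive measure $2^{-k}$, we have
\[
E_{\mathcal{F}_k}g=\sum_{K\in\mathbb{D}_k}\Big(\frac{1}{\mu_n(K)}\int_K g\,d\mu_n\Big)\1_K .
\]
So it suffices to show $\int_K h_I\,d\mu_n=0$ for every $K\in\mathbb{D}_k$.

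\begin{itemize}
\item If $K\neq I$, then $K$ is disjoint from $I\supset I^+\cup I^-$. Hence $h_I$ vanishes on $K$ and the integral is zero.
\item If $K=I$, the integral equals $2^{\frac k2}\big(\mu_n(I^+)-\mu_n(I^-)\big)$. Both $I^+$ and $I^-$ have measure $2^{-(k+1)}$ under the uniform product measure, because each fixes the first $k+1$ coordinates. So this integral is zero as well.
\end{itemize}

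Therefore $E_{\mathcal{F}_k}h_I=0$.

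No step is a real obstacle; the only point needing care is writing the conditional expectation as an average over atoms and checking the two measures agree.
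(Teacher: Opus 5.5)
Your proof is correct: measurability follows because $I^{\pm}$ are atoms of $\mathbb{D}_{k+1}$. Writing $E_{\mathcal{F}_k}$ as averaging over the atoms of $\mathbb{D}_k$ and checking $\mu_n(I^+)=\mu_n(I^-)=2^{-(k+1)}$ then gives $E_{\mathcal{F}_k}h_I=0$. The paper gives no proof of this lemma and only calls it standard, so your argument is the routine verification the authors have in mind, and it covers the case $k=0$, where $\mathcal{F}_0$ is trivial.
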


\begin{lem}\label{s7 pointwise equality} We have the pointwise identity
\begin{equation*}
S(\mathcal{C}F)=\left(\sum_{k,l=-1}^{n-1}\Big|\Big((E_{\mathcal{F}_{k+1}}-E_{\mathcal{F}_k})\otimes(E_{\mathcal{F}_{l+1}}-E_{\mathcal{F}_l})\Big)F\Big|^2\right)^{\frac12}.
\end{equation*}
Here, we use a shorthand $E_{\mathcal{F}_{-1}}=0.$
\end{lem}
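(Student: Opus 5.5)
The plan is to expand $F$ in the product Haar basis and compute each summand $|(\Delta_k\otimes\Delta_l)F|^2$ explicitly, where we write $\Delta_k=E_{\mathcal{F}_{k+1}}-E_{\mathcal{F}_k}$ for $-1\le k\le n-1$ (so $\Delta_{-1}=E_{\mathcal{F}_0}$). The starting point is the one-parameter statement: for $f\in L_2(\Omega_n)$,
\[
\Delta_k f=\sum_{I\in\mathbb{D}_k}\langle f,h_I\rangle h_I \quad (0\le k\le n-1),\qquad \Delta_{-1}f=\langle f,h_\emptyset\rangle h_\emptyset .
\]
To verify it, first use Lemma \ref{orthonormal basis lemma} to expand $f$. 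Then apply Lemma \ref{conditional expectation of haar}: for $I\in\mathbb{D}_m$, $h_I$ is $\mathcal{F}_{m+1}$-measurable and $E_{\mathcal{F}_m}h_I=0$. Since $\mathcal{F}_j$ is increasing, this gives $E_{\mathcal{F}_j}h_I=h_I$ when $j\ge m+1$ and $E_{\mathcal{F}_j}h_I=0$ when $j\le m$. The constant $h_\emptyset$ is fixed by every $E_{\mathcal{F}_j}$ with $j\ge0$ and killed by $E_{\mathcal{F}_{-1}}=0$. Consequently $\Delta_k h_I=h_I$ exactly when $I\in\mathbb{D}_k$, with $\mathbb{D}_{-1}$ read as $\{\emptyset\}$, and $\Delta_k h_I=0$ otherwise.

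Next, tensorize. Since $\Delta_k\otimes\Delta_l$ acts on the product basis as $(\Delta_k h_I)\otimes(\Delta_l h_J)$, we obtain
\[
(\Delta_k\otimes\Delta_l)F=\sum_{I\in\mathbb{D}_k,\,J\in\mathbb{D}_l}\langle F,h_I\otimes h_J\rangle\,h_I\otimes h_J .
\]
The key observation is that, for fixed $k\ge0$, the Haar functions $h_I$ with $I\in\mathbb{D}_k$ have pairwise disjoint supports, because the atoms of $\mathbb{D}_k$ are disjoint. Hence products $h_I h_{I'}$ with $I\ne I'$ in the same generation vanish pointwise, and the same holds trivially for $k=-1$, where there is only one function. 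The same is true in the second variable. Therefore
\[
|(\Delta_k\otimes\Delta_l)F|^2=\sum_{I\in\mathbb{D}_k,J\in\mathbb{D}_l}|\langle F,h_I\otimes h_J\rangle|^2\, h_I^2\otimes h_J^2
\]
pointwise.

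Finally, sum over $k,l\in\{-1,\dots,n-1\}$. The sets $\mathbb{D}_{-1}=\{\emptyset\},\mathbb{D}_0,\dots,\mathbb{D}_{n-1}$ partition $\mathcal{D}_n$, so the double sum runs over all $I,J\in\mathcal{D}_n$ and equals $S(\mathcal{C}F)^2$. Taking square roots gives the identity.

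The only delicate step is the disjoint-support argument that removes the cross terms within a generation, together with the bookkeeping for the index shift $k\leftrightarrow\mathbb{D}_k$ and the convention $h_\emptyset=1$. One must also keep in mind that $h_I$ for $I\in\mathbb{D}_k$ lives on the atom $I$. Everything else is linear algebra in the finite-dimensional space $L_2(\Omega_n\times\Omega_n)$, where $L_1=L_2$.
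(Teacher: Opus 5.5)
Your proposal is correct and follows essentially the same route as the paper. Both arguments use Lemma \ref{conditional expectation of haar} to identify $(E_{\mathcal{F}_{k+1}}-E_{\mathcal{F}_k})\otimes(E_{\mathcal{F}_{l+1}}-E_{\mathcal{F}_l})F$ with $\sum_{I\in\mathbb{D}_k,J\in\mathbb{D}_l}\langle F,h_I\otimes h_J\rangle h_I\otimes h_J$, then kill the cross terms using $h_{I_1}h_{I_2}=0$ for distinct $I_1,I_2$ in the same generation, and finally sum over generations. Your version is more explicit than the paper's about the tower-property bookkeeping and the $k=-1$ (that is, $h_\emptyset$) cases, which the paper dispatches with ``similarly''.
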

\begin{proof} If $k,l\geq0,$ then it follows from Lemma \ref{conditional expectation of haar} that
$$\Big((E_{\mathcal{F}_{k+1}}-E_{\mathcal{F}_k})\otimes(E_{\mathcal{F}_{l+1}}-E_{\mathcal{F}_l})\Big)F=\sum_{I\in\mathbb{D}_k,J\in\mathbb{D}_l}\langle F,h_I\otimes h_J\rangle (h_I\otimes h_J).$$
Obviously,
\begin{align*}
&\Big|\sum_{I\in\mathbb{D}_k,J\in\mathbb{D}_l}\langle F,h_I\otimes h_J\rangle (h_I\otimes h_J)\Big|^2\\
=& \sum_{I_1,I_2\in\mathbb{D}_k,J_1,J_2\in\mathbb{D}_l}\overline{\langle F,h_{I_1}\otimes h_{J_1}\rangle}\cdot \langle F,h_{I_2}\otimes h_{J_2}\rangle (h_{I_1}h_{I_2}\otimes h_{J_1}h_{J_2}).
\end{align*}
Note that
$$h_{I_1}h_{I_2}=0,\quad I_1,I_2\in\mathbb{D}_k,\quad I_1\neq I_2.$$
It follows that
$$\Big|\sum_{I\in\mathbb{D}_k,J\in\mathbb{D}_l}\langle F,h_I\otimes h_J\rangle (h_I\otimes h_J)\Big|^2=\sum_{I\in\mathbb{D}_k,J\in\mathbb{D}_l}|\langle F,h_I\otimes h_J\rangle|^2\cdot (h_I^2\otimes h_J^2).$$
In other words,
$$\Big|\Big((E_{\mathcal{F}_{k+1}}-E_{\mathcal{F}_k})\otimes(E_{\mathcal{F}_{l+1}}-E_{\mathcal{F}_l})\Big)F\Big|^2=\sum_{I\in\mathbb{D}_k,J\in\mathbb{D}_l}|\langle F,h_I\otimes h_J\rangle|^2\cdot(h_I^2\otimes h_J^2).$$

Similarly,
$$\Big|\Big(E_{\mathcal{F}_0}\otimes(E_{\mathcal{F}_{l+1}}-E_{\mathcal{F}_l})\Big)F\Big|^2=\sum_{J\in\mathbb{D}_l}|\langle F,h_{\emptyset}\otimes h_J\rangle|^2\cdot (h_{\emptyset}^2\otimes h_J^2),$$
$$\Big|\Big((E_{\mathcal{F}_{k+1}}-E_{\mathcal{F}_k})\otimes E_{\mathcal{F}_0})\Big)F\Big|^2=\sum_{I\in\mathbb{D}_k}|\langle F,h_I\otimes h_{\emptyset}\rangle|^2\cdot (h_I^2\otimes h_{\emptyset}^2),$$
$$\Big|\Big(E_{\mathcal{F}_0}\otimes E_{\mathcal{F}_0})\Big)F\Big|^2=|\langle F,h_{\emptyset}\otimes h_{\emptyset}\rangle|^2.$$
Summing those equalities, we complete the proof.
\end{proof}

For $1\le u<\infty$, we use the notation
$$\|F\|_{H_u}=\left\|\left(\sum_{k,l=-1}^{n-1}\Big|\Big((E_{\mathcal{F}_{k+1}}-E_{\mathcal{F}_k})\otimes(E_{\mathcal{F}_{l+1}}-E_{\mathcal{F}_l})\Big)F\Big|^2\right)^{\frac12}\right\|_u.$$

\begin{lem}\label{hardy space vs tent space}
For every $1\leq u<\infty$, we have $\|\mathcal{C}F\|_{\T_u}=\|F\|_{H_u}.$
\end{lem}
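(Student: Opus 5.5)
This identity should follow almost immediately from Lemma~\ref{s7 pointwise equality}, which is a pointwise statement. The plan is to read the definitions of the two norms side by side and use that lemma to match the integrands.

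By definition, $\|\mathcal{C}F\|_{\T_u}=\|S(\mathcal{C}F)\|_{L_u(\Omega_n\times\Omega_n)}$. Here
\[
S(\mathcal{C}F)^2=\sum_{I,J\in\mathcal{D}_n}|\langle F,h_I\otimes h_J\rangle|^2\,(h_I^2\otimes h_J^2).
\]
Lemma~\ref{s7 pointwise equality} states that, at every point of $\Omega_n\times\Omega_n$, $S(\mathcal{C}F)$ equals the double martingale square function
\[
\Big(\sum_{k,l=-1}^{n-1}\big|\big((E_{\mathcal{F}_{k+1}}-E_{\mathcal{F}_k})\otimes(E_{\mathcal{F}_{l+1}}-E_{\mathcal{F}_l})\big)F\big|^2\Big)^{1/2}.
\]
The $L_u$-norm of this square function is, by definition, $\|F\|_{H_u}$. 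Taking $L_u$-norms of both sides of the pointwise identity therefore gives $\|\mathcal{C}F\|_{\T_u}=\|F\|_{H_u}$ for every $1\le u<\infty$.

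The only point that needs care is the indexing convention, so that the sums really coincide term by term. The index value $k=-1$ corresponds to $E_{\mathcal{F}_0}-E_{\mathcal{F}_{-1}}=E_{\mathcal{F}_0}$. This term picks up the coefficient of $h_\emptyset=1$, and this matches the inclusion of $\emptyset$ in $\mathcal{D}_n$. The indices $k\ge0$ correspond to the Haar functions $h_I$ with $I\in\mathbb{D}_k$, via Lemma~\ref{conditional expectation of haar}. Both of these cases are already handled in the proof of Lemma~\ref{s7 pointwise equality}.

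So no real obstacle arises: the statement is just a reformulation of that pointwise identity in integrated form. The equality holds trivially for every $F\in L_1(\Omega_n\times\Omega_n)$, since $\Omega_n$ is finite and all sums are finite.
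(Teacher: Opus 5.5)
Your proposal is correct and is the same argument as the paper's, which simply notes that the equality follows immediately from Lemma~\ref{s7 pointwise equality}. Taking $L_u$-norms of that pointwise identity between $S(\mathcal{C}F)$ and the double martingale square function gives $\|\mathcal{C}F\|_{\T_u}=\|F\|_{H_u}$ directly from the definitions.
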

\begin{proof} The assertion follows immediately from Lemma \ref{s7 pointwise equality}.
\end{proof}

For $f\in L_1(\Omega_n\times\Omega_n),$ define its maximal function by
$$(\mathcal{M}f)(x,y)=\sup_{\substack{I,J\in\mathcal{D}_n\\ h_I(x)\neq0\\ h_J(y)\neq 0}}\int_{\Omega_n\times\Omega_n}(|f|\cdot (h_I^2\otimes h_J^2)),\quad x,y\in\Omega_n.$$
Note that $h_I^2=\mu_n(I)^{-1}\1_I$ for each $I\in\mathcal{D}_n$. Then $\mathcal{M}$ is actually the Doob maximal function for two-parameter martingales.
The following strong $L_p$-boundedness of the maximal operator is taken from \cite[Proposition 3.4]{We1994}.

\begin{lem}[Weisz]\label{maximal function norm estimate}
For every $1<r<\infty$, we have
$$\|\mathcal{M}\|_{L_r(\Omega_n\times\Omega_n)\circlearrowleft}\leq \left(\frac{r}{r-1}\right)^2.$$
\end{lem}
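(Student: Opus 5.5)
The plan is to dominate the two-parameter maximal operator $\mathcal{M}$ pointwise by a composition of two one-parameter Doob maximal operators, one acting in each variable. Then Doob's $L_r$ inequality, applied in each variable through Fubini, gives one factor $\frac{r}{r-1}$ per variable.

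\emph{Step 1: rewrite the averages as conditional expectations.} Fix $x,y$ and $I\in\mathbb{D}_k$, $J\in\mathbb{D}_l$ with $h_I(x)\neq0$ and $h_J(y)\neq0$. Since $h_I^2=\mu_n(I)^{-1}\1_I$, the condition $h_I(x)\neq0$ means $x\in I$, so $I$ is the atom of $\mathcal{F}_k$ containing $x$; the same holds for $J$ and $y$. For the convention $h_\emptyset=1$ we use the trivial $\sigma$-algebra $\mathcal{F}_0$. Therefore
\[
\int_{\Omega_n\times\Omega_n}|f|\,(h_I^2\otimes h_J^2)=\big((E_{\mathcal{F}_k}\otimes E_{\mathcal{F}_l})|f|\big)(x,y),
\]
and consequently
\[
\mathcal{M}f=\sup_{0\le k,l\le n}(E_{\mathcal{F}_k}\otimes E_{\mathcal{F}_l})|f|
\]
pointwise. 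The index range $0\le k,l\le n$ covers the $h_\emptyset$ case as well as $\mathbb{D}_0,\dots,\mathbb{D}_{n-1}$.

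\emph{Step 2: iterate one-parameter maximal functions.} Define
\[
M^{(2)}g=\sup_l({\rm id}\otimes E_{\mathcal{F}_l})|g|,\qquad M^{(1)}g=\sup_k(E_{\mathcal{F}_k}\otimes{\rm id})|g|.
\]
Since $E_{\mathcal{F}_k}\otimes E_{\mathcal{F}_l}=(E_{\mathcal{F}_k}\otimes{\rm id})({\rm id}\otimes E_{\mathcal{F}_l})$ and conditional expectations are positive, we get
\[
(E_{\mathcal{F}_k}\otimes E_{\mathcal{F}_l})|f|\le (E_{\mathcal{F}_k}\otimes{\rm id})\big(M^{(2)}f\big)\le M^{(1)}\big(M^{(2)}f\big).
\]
Taking the supremum over $k,l$ yields $\mathcal{M}f\le M^{(1)}M^{(2)}f$.

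\emph{Step 3: apply Doob's inequality in each variable.} For fixed $x$, the quantity $M^{(2)}f(x,\cdot)$ is the Doob maximal function of the martingale $E_{\mathcal{F}_l}|f(x,\cdot)|$. Doob's inequality gives
\[
\|M^{(2)}f(x,\cdot)\|_{L_r}\le\tfrac{r}{r-1}\|f(x,\cdot)\|_{L_r},
\]
and integrating in $x$ (Fubini) gives $\|M^{(2)}f\|_{L_r}\le\tfrac{r}{r-1}\|f\|_{L_r}$. The same argument in the first variable, applied to $g=M^{(2)}f\ge0$, gives $\|M^{(1)}g\|_{L_r}\le\frac{r}{r-1}\|g\|_{L_r}$. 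Combining the two yields the constant $\big(\frac{r}{r-1}\big)^2$.

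The only point requiring care is Step 1: matching the normalized $h_I^2$ with the atoms of $\mathcal{F}_k$, and treating $h_\emptyset$ as corresponding to $\mathcal{F}_0$. Once the identification is made, the remaining steps are routine.
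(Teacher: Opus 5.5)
Your argument is correct. It is the standard proof of Cairoli's strong maximal inequality, which is what lies behind the paper's bare citation of Weisz's Proposition 3.4 (the paper gives no proof of its own). That proof factors $E_{\mathcal{F}_k}\otimes E_{\mathcal{F}_l}=(E_{\mathcal{F}_k}\otimes{\rm id})({\rm id}\otimes E_{\mathcal{F}_l})$, dominates $\mathcal{M}$ by the composition of the two one-parameter Doob maximal operators, and applies Doob's $L_r$ inequality in each variable via Fubini, exactly as you do.

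One small correction concerns Step 1. The family $\mathcal{D}_n$ consists only of $\emptyset$ and $\mathbb{D}_0,\dots,\mathbb{D}_{n-1}$, and $h_\emptyset^2=h_{\Omega_n}^2$, so the identity is really $\mathcal{M}f=\sup_{0\le k,l\le n-1}(E_{\mathcal{F}_k}\otimes E_{\mathcal{F}_l})|f|$. With your range $0\le k,l\le n$ you therefore get only $\le$, which is harmless for the upper bound.
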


In what follows, we use the notation: for each $ I,J\in\mathcal{D}_n$ and  $f\in L_1(\Omega_n\times\Omega_n)$, set
$$\fint_{I\times J}f=\int_{\Omega_n\times\Omega_n}(f\cdot (h_I^2\otimes h_J^2))=\frac{1}{\mu_n(I)\mu_n(J)}\int_{I\times J} f.$$

\begin{lem}\label{carleson lemma}
	Let $0<s<\infty$. For positive sequence $b\in\T_{\infty}$ and for positive function $f\in L_s(\Omega_n\times\Omega_n),$ we have
\begin{equation}\label{eq:layer-cake-carleson}
\sum_{I,J\in\mathcal{D}_n}b_{I,J}^2\Big(\fint_{I\times J}f\Big)^s\leq \|b\|_{\T_{\infty}}^2
 \|\mathcal{M}f\|_{L_s(\Omega_n\times\Omega_n)}^s.
\end{equation}
\end{lem}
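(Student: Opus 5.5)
We prove \eqref{eq:layer-cake-carleson} by a layer-cake decomposition of the left side, using the level sets of the maximal function $\mathcal{M}f$ as the sets $U$ in the definition of $\|b\|_{\T_\infty}$. First rewrite each term via
\[
\Big(\fint_{I\times J}f\Big)^s=\int_0^{\infty}s\lambda^{s-1}\1_{\{\fint_{I\times J}f>\lambda\}}\,d\lambda .
\]
By Tonelli (all terms are nonnegative) we obtain
\[
\sum_{I,J}b_{I,J}^2\Big(\fint_{I\times J}f\Big)^s=\int_0^\infty s\lambda^{s-1}\sum_{\substack{I,J\in\mathcal{D}_n\\ \fint_{I\times J}f>\lambda}}b_{I,J}^2\,d\lambda .
\]

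\textbf{Key geometric step.} Fix $\lambda>0$ and set $U_\lambda=\{(x,y):(\mathcal{M}f)(x,y)>\lambda\}$. We claim that $\fint_{I\times J}f>\lambda$ implies $I\times J\subset U_\lambda$. Take any $(x,y)\in I\times J$. Then $h_I(x)\neq0$ and $h_J(y)\neq0$, because $h_I^2=\mu_n(I)^{-1}\1_I$; this also covers the convention $h_\emptyset=1$. Hence the pair $(I,J)$ is admissible in the supremum defining $\mathcal{M}f(x,y)$, so
\[
\mathcal{M}f(x,y)\geq\fint_{I\times J}f>\lambda .
\]

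One point needs care here: $\emptyset\in\mathcal{D}_n$ is an index rather than a set. For $I=\emptyset$, "$I\times J\subset U$" must be read with $\emptyset$ standing for the full cube $\Omega_n$. This is consistent with $h_\emptyset^2=1=\mu_n(\Omega_n)^{-1}\1_{\Omega_n}$, so the same argument goes through. This bookkeeping, which must match the conventions in the definitions of $\T_\infty$ and BMO, is the only place I expect any real subtlety.

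Given the claim, the definition of $\T_\infty$ yields
\[
\sum_{\fint_{I\times J}f>\lambda}b_{I,J}^2\le\sum_{I\times J\subset U_\lambda}b_{I,J}^2\le\|b\|_{\T_\infty}^2(\mu_n\otimes\mu_n)(U_\lambda).
\]
If $U_\lambda=\emptyset$, the sum is empty and this bound holds trivially. Substituting back into the integral gives
\[
\sum_{I,J}b_{I,J}^2\Big(\fint_{I\times J}f\Big)^s\le\|b\|_{\T_\infty}^2\int_0^\infty s\lambda^{s-1}(\mu_n\otimes\mu_n)(\mathcal{M}f>\lambda)\,d\lambda=\|b\|_{\T_\infty}^2\|\mathcal{M}f\|_{L_s}^s ,
\]
where the last equality is the layer-cake formula again. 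This proves \eqref{eq:layer-cake-carleson}.
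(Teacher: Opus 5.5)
Your proof is correct and follows essentially the same argument as the paper. Both proofs use the layer-cake formula for $\bigl(\fint_{I\times J}f\bigr)^s$, the observation that every rectangle with $\fint_{I\times J}f>\lambda$ lies in $\{\mathcal{M}f>\lambda\}$, and the $\T_\infty$ packing condition applied to that level set. The only difference is cosmetic: the paper first forms $U_t$ as the union of these rectangles and then embeds it in $\{\mathcal{M}f>t\}$, whereas you use the level set directly, which also shows that the comparison of sums is an inequality rather than the equality the paper writes.
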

\begin{proof} For $t>0,$ let
$$A_t= \{I\times J: I,J\in\mathcal{D}_n, \fint_{I\times J}f>t\},\quad U_t=\bigcup_{I\times J\in A_t} I\times J.$$
We claim that $U_t\subset\{\mathcal{M}f>t\}.$ Indeed, if  $(x,y)\in U_t,$ then there exist $I,J\in\mathcal{D}_n,$ such that $\fint_{I\times J}f>t$ and $(h_I\otimes h_J)(x,y)\neq 0.$ The second condition means $h_I(x)\neq0$ and $h_J(y)\neq 0.$ Hence,
$$(\mathcal{M}f)(x,y)\geq\int_{\Omega_n\times\Omega_n}(f\cdot (h_I^2\otimes h_J^2))=\fint_{I\times J}f>t.$$
This immediately yields the claim.
Therefore, we have
$$\sum_{\substack{I,J\in\mathcal{D}_n\\ I\times J\in A_t}}b_{I,J}^2=\sum_{\substack{I,J\in\mathcal{D}_n\\ I\times J\subset U_t}}b_{I,J}^2\leq (\mu_n\otimes\mu_n)(U_t)\|b\|_{\T_{\infty}}^2\leq (\mu_n\otimes\mu_n)(\{\mathcal{M}f>t\})\|b\|_{\T_{\infty}}^2.$$
Here, the first inequality follows from the definition of the set $U_t,$ while the second one follows from the definition of $\|\cdot\|_{\T_{\infty}}.$

Note that
$$\Big\|\Big(\fint_{I\times J}f\Big)_{I,J\in\mathcal{D}_n}\Big\|_{\ell_s}=\int_0^{\infty}st^{s-s}|A_t|dt.$$
Thus,
\begin{align*}
\sum_{I,J\in\mathcal{D}_n}b_{I,J}^2\Big(\fint_{I\times J}f\Big)^s&=\int_0^{\infty}st^{s-1}\Big(\sum_{\substack{I,J\in\mathcal{D}_n\\ I\times J\in A_t}}b_{I,J}^2\Big)dt\\
&\leq\int_0^{\infty}st^{s-1}\Big((\mu_n\otimes\mu_n)(\{\mathcal{M}f>t\})\cdot \|b\|_{\T_{\infty}}^2\Big)dt\\
&=\|b\|_{\T_{\infty}}^2\|\mathcal{M}f\|_{L_s(\Omega_n\times\Omega_n)}^s.
\end{align*}
\end{proof}

\begin{lem}\label{s7 holder lemma} For positive sequences $a\in\T_2$ and $b\in\T_{\infty},$ we have
$$\|a^{\frac{2}{q}}b^{1-\frac{2}{q}}\|_{\T_q}\leq c_{{\rm abs}}q\|a\|_{\T_2}^{\frac{2}{q}}\|b\|_{\T_{\infty}}^{1-\frac{2}{q}}.$$
\end{lem}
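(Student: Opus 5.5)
Set $c=a^{2/q}b^{1-2/q}$ and $S(c)^2=\sum_{I,J}c_{I,J}^2(h_I^2\otimes h_J^2)$. We need to show $\|S(c)\|_{L_q}^2=\|S(c)^2\|_{L_{q/2}}\lesssim q^2\|a\|_{\T_2}^{4/q}\|b\|_{\T_\infty}^{2-4/q}$. For $q=2$ the claim is trivial, since $c=a$ and $\|a\|_{\T_2}$ equals the $L_2$ norm of $S(a)$. So assume $q>2$ and argue by duality on $L_{q/2}$ with dual exponent $r=(q/2)'=\frac{q}{q-2}$.

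Take a positive $g$ with $\|g\|_{L_r}\le1$. Then
\[
\int S(c)^2 g=\sum_{I,J}c_{I,J}^2\fint_{I\times J}g=\sum_{I,J}a_{I,J}^{4/q}\Big(b_{I,J}^{2}\Big(\fint_{I\times J}g\Big)^{s}\Big)^{1-2/q}
\]
with $s$ chosen so that $s(1-2/q)=1$, i.e. $s=\frac{q}{q-2}=r$. Apply Hölder on the index sum with exponents $q/2$ and $(1-2/q)^{-1}$. This bounds the sum by
\[
\Big(\sum a_{I,J}^2\Big)^{2/q}\Big(\sum b_{I,J}^2\big(\textstyle\fint_{I\times J}g\big)^{r}\Big)^{1-2/q}.
\]
The first factor is $\|a\|_{\T_2}^{4/q}$. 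For the second, Lemma \ref{carleson lemma} with $s=r$ and $f=g$ gives the bound $\|b\|_{\T_\infty}^2\|\mathcal Mg\|_{L_r}^r$.

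Next, Lemma \ref{maximal function norm estimate} gives $\|\mathcal Mg\|_{L_r}\le (r')^2=(q/2)^2$. Raising to the power $r(1-2/q)=1$ leaves exactly $(q/2)^2$. Collecting everything, $\|S(c)\|_q^2\le \frac{q^2}{4}\|a\|_{\T_2}^{4/q}\|b\|_{\T_\infty}^{2-4/q}$. Taking square roots yields the claim with constant $q/2$, so $c_{\rm abs}=1/2$ suffices.

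The step that needs the most care is the bookkeeping of exponents. Carleson's lemma must be used with exponent $r$ exactly, so that the power of $\|\mathcal Mg\|_r$ becomes $1$ and the constant stays linear in $q$. The constant degenerates as $q\to2^+$ only through $r\to\infty$, and that direction is harmless because $r'\to1$ there. Equivalently, $r'=q/2$ is small when $q$ is near $2$ and grows linearly in $q$, which gives the stated $O(q)$ constant. A routine check is that the dual characterisation $\|F\|_{q/2}=\sup_g\int Fg$ over positive $g$ is valid for positive $F$ with $q/2\ge1$.
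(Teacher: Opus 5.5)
Your proof is correct and is essentially the paper's argument. You dualize $\|S(c)^2\|_{L_{q/2}}$ against positive $g$ in the unit ball of $L_{q/(q-2)}$, apply the discrete H\"older inequality with exponents $q/2$ and $q/(q-2)$, invoke Lemma \ref{carleson lemma} with $s=q/(q-2)$, and finish with Weisz's bound $\|\mathcal{M}\|_{L_r\circlearrowleft}\le (r')^2=(q/2)^2$, which gives the constant $q/2$. Your separate treatment of $q=2$, where $q/(q-2)$ is undefined, is a small tidy addition that the paper leaves implicit.
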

\begin{proof} Let $0\leq f\in L_{\frac{q}{q-2}}(\Omega_n\times\Omega_n)$ satisfy $\|f\|_{L_{\frac{q}{q-2}}(\Omega_n\times\Omega_n)}=1.$ 
We have
$$\int S^2(a^{\frac{2}{q}}b^{1-\frac{2}{q}})f=\sum_{I,J\in\mathcal{D}_n}a_{I,J}^{\frac{4}{q}}b_{I,J}^{2-\frac{4}{q}}\fint_{I\times J}f.$$
Using (discrete) H\"older inequality, we estimate
$$\int S^2(a^{\frac{2}{q}}b^{1-\frac{2}{q}})f\leq \Big(\sum_{I,J\in\mathcal{D}_n}a_{I,J}^2\Big)^{\frac{2}{q}}\Big(\sum_{I,J\in\mathcal{D}_n}b_{I,J}^2\Big(\fint_{I\times J}f\Big)^{\frac{q}{q-2}}\Big)^{1-\frac{2}{q}}.$$
Using Lemma \ref{carleson lemma} with $s=\frac{q}{q-2},$ we write
$$\int S^2(a^{\frac{2}{q}}b^{1-\frac{2}{q}})f\leq \|a\|_{\T_2}^{\frac{4}{q}}\|b\|_{\T_{\infty}}^{2-\frac4q}\|\mathcal{M}f\|_{L_{\frac{q}{q-2}}(\Omega_n\times\Omega_n)}.$$
Taking the supremum over $f$ running through the positive unit ball of $L_{\frac{q}{q-2}(\Omega_n\times\Omega_n)},$ we write
$$\|S^2(a^{\frac{2}{q}}b^{1-\frac{2}{q}})\|_{L_{\frac{q}{2}}(\Omega_n\times\Omega_n)}\leq  \|a\|_{\T_2}^{\frac{4}{q}}\|b\|_{\T_{\infty}}^{2-\frac4q}\|\mathcal{M}\|_{L_{\frac{q}{q-2}}(\Omega_n\times\Omega_n)\circlearrowleft}.$$
In other words,
$$\|S(a^{\frac{2}{q}}b^{1-\frac{2}{q}})\|_{L_q(\Omega_n\times\Omega_n)}\leq  \|a\|_{\T_2}^{\frac{2}{q}}\|b\|_{\T_{\infty}}^{1-\frac2q}\|\mathcal{M}\|_{L_{\frac{q}{q-2}}(\Omega_n\times\Omega_n)\circlearrowleft}^{\frac12}.$$
The assertion follows now from Lemma \ref{maximal function norm estimate}.
\end{proof}

\begin{thm}\label{thm:tent}
Let $2\leq q<\infty$ and $\theta=1-\frac{2}{q}.$ We have
$$\|\cdot\|_{\T_q}\leq c_{{\rm abs}}q\|\cdot\|_{[\T_2,\T_{\infty}]_{\theta}}.$$
\end{thm}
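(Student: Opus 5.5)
The plan is to combine Theorem \ref{lem:lattice} with Lemma \ref{s7 holder lemma}. The spaces $\T_2$ and $\T_\infty$ are sequence spaces on the finite index set $\mathcal{D}_n\times\mathcal{D}_n$, and both norms are lattice norms: they depend only on $|a_{I,J}|$ and are monotone in it. So $\T_2$ and $\T_\infty$ are Banach lattices on the same finite-dimensional space $\mathbb{C}^{\mathcal{D}_n\times\mathcal{D}_n}$. In particular, $\T_2$ is a finite-dimensional Hilbert space and therefore has the Radon--Nikodym property. Theorem \ref{lem:lattice} then gives, isometrically,
\[
[\T_2,\T_\infty]_\theta=\T_2^{1-\theta}\T_\infty^{\theta},\qquad 1-\theta=\tfrac2q .
\]

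Next fix $a$ with $\|a\|_{[\T_2,\T_\infty]_\theta}<1$. By the Calder\'on--Lozanovskii description there are $b\in\T_2$ and $c\in\T_\infty$ with $|a|\le |b|^{2/q}|c|^{1-2/q}$ coordinatewise and $\|b\|_{\T_2}^{2/q}\|c\|_{\T_\infty}^{1-2/q}<1$. Replacing $b,c$ by $|b|,|c|$ changes neither norm, so we may take them positive.

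Since $S(\cdot)$ is monotone in the absolute values of the coefficients, $\|a\|_{\T_q}\le \||b|^{2/q}|c|^{1-2/q}\|_{\T_q}$. Lemma \ref{s7 holder lemma} bounds the right-hand side by $c_{\rm abs}\,q\,\|b\|_{\T_2}^{2/q}\|c\|_{\T_\infty}^{1-2/q}<c_{\rm abs}\,q$. Homogeneity then gives the claim.

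The case $q=2$ is trivial, since then $\theta=0$. The only point needing care is the applicability of Theorem \ref{lem:lattice}: one must check that $\T_\infty$ is genuinely a Banach lattice (its norm is a supremum of $\ell_2$-type sums over $|a_{I,J}|$, so this holds) and that finite dimensionality makes the Radon--Nikodym hypothesis automatic. Because everything is finite-dimensional, no density or closure issues arise, and the main work has already been done in Lemma \ref{s7 holder lemma}.
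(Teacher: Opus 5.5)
Your proposal is correct and follows the paper's own proof. Like the paper, you identify $[\T_2,\T_\infty]_\theta$ isometrically with $\T_2^{1-\theta}\T_\infty^{\theta}$ via Theorem~\ref{lem:lattice}, using the Radon--Nikodym property of $\T_2$, and then invoke Lemma~\ref{s7 holder lemma}. You also spell out three points the paper leaves implicit: the lattice monotonicity of $S$, the reduction to positive $b,c$, and the separate treatment of $q=2$, which is needed because Theorem~\ref{lem:lattice} requires $0<\theta<1$ and Lemma~\ref{s7 holder lemma} degenerates there.
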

\begin{proof} Clearly, $\T_2=\ell_2$ possesses Radon-Nikodym property. By Theorem \ref{lem:lattice}, 
$$[\T_2,\T_{\infty}]_{\theta}=\T_2^{1-\theta}\T_{\infty}^{\theta}$$
isometrically. The assertion follows now from Lemma \ref{s7 holder lemma}.
\end{proof}

We now identify the Hardy norm with the ordinary $L_q$ norm, which follows from Burkholder-Gundy inequality for double dyadic martingales of Pipher \cite[Euqation (2.4)]{Pip1986}.

\begin{lem}[Pipher]\label{lem:product-square}
For every $2\leq q<\infty,$ we have
$$\|F\|_{L_q(\Omega_n\times\Omega_n)}\leq c_{{\rm abs}}q\|F\|_{H_q(\Omega_n\times\Omega_n)}.$$
The constant in the right hand side is optimal.
\end{lem}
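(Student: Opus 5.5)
We need to prove Pipher's lemma: for $2\le q<\infty$, $\|F\|_{L_q(\Omega_n\times\Omega_n)}\le c_{\rm abs}\,q\,\|F\|_{H_q}$, where $H_q$ is defined through the double-martingale square function. The plan is to iterate the one-parameter Burkholder--Gundy inequality, one coordinate at a time.

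\textbf{One-parameter input.} For a dyadic martingale $g$ on $\Omega_n$ with differences $d_kg=(E_{\mathcal F_{k+1}}-E_{\mathcal F_k})g$, and with $E_{\mathcal F_0}g$ included as the $k=-1$ term, Burkholder's inequality gives
\[
\|g\|_{L_q}\le c_{\rm abs}\,\sqrt{q}\,\Bigl\|\Bigl(\sum_{k\ge -1}|d_kg|^2\Bigr)^{1/2}\Bigr\|_{L_q},\qquad q\ge 2.
\]
For dyadic (conditionally symmetric, Paley--Walsh) martingales the sharp order is $\sqrt q$, via a Khintchine/Rademacher-type argument. I also need the Hilbert-valued version, with $|d_kg|$ replaced by $\|d_kg\|_{\ell_2}$ for $\ell_2$-valued $g$, with the same constant. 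This holds because the proof is a good-$\lambda$ or conditional-symmetry argument that transfers verbatim to Hilbert-space-valued martingales.

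\textbf{Iteration.} Fix $y$ and regard $x\mapsto F(x,y)$ as a martingale in the first variable. Its differences are $((E_{\mathcal F_{k+1}}-E_{\mathcal F_k})\otimes{\rm id})F$. Applying the scalar inequality in $x$ and integrating in $y$ gives
\[
\|F\|_{L_q}\le c\sqrt q\,\Bigl\|\Bigl(\sum_k |G_k|^2\Bigr)^{1/2}\Bigr\|_{L_q},\qquad G_k=((E_{\mathcal F_{k+1}}-E_{\mathcal F_k})\otimes{\rm id})F.
\]
Next fix $x$ and view $y\mapsto (G_k(x,y))_k$ as an $\ell_2$-valued martingale in $y$. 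Its differences are the double differences
\[
\bigl((E_{\mathcal F_{k+1}}-E_{\mathcal F_k})\otimes(E_{\mathcal F_{l+1}}-E_{\mathcal F_l})\bigr)F,
\]
because the tensor-product operators commute. The Hilbert-valued inequality then gives a second factor $c\sqrt q$. Multiplying the two factors yields $c^2 q$, and the right-hand side is exactly $\|F\|_{H_q}$.

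\textbf{Main obstacle.} The key point is obtaining the sharp order $\sqrt q$, rather than $q$, in the vector-valued one-parameter inequality. A general-martingale Burkholder constant would only give $q^2$ after iteration. For the dyadic filtration I would use the fact that each $d_kg$ equals $\varepsilon_{k+1}$ times an $\mathcal F_k$-measurable function. Conditionally, this makes the martingale a Rademacher-type sum, so the subgaussian bound $\|\sum_k \varepsilon_k v_k\|_q\lesssim\sqrt q\,(\sum_k\|v_k\|^2)^{1/2}$ for predictable Hilbert-valued coefficients applies. This can be established via an exponential-supermartingale estimate $E\exp(\lambda\langle g,e\rangle-\lambda^2 S^2/2)\le1$, or by Burkholder--Davis--Gundy with the sharp constant $O(\sqrt q)$ for continuous-path/conditionally symmetric martingales. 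The optimality claim, that $q$ cannot be improved, is not needed for the paper's main results. It follows by taking $F=f\otimes f$ with $f$ a normalized Rademacher sum, since $\|f\|_q\sim\sqrt q$ while the square function stays bounded, so the ratio is of order $q$.
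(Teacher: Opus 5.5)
Your proof is correct. The paper does not prove this lemma: it attributes the inequality to Pipher's Burkholder--Gundy estimate for double dyadic martingales and states optimality without proof. Your iteration is therefore a self-contained route.

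Applying the one-parameter Paley--Walsh square-function inequality first in $x$ (scalar), then in $y$ (with values in $\ell_2$), is sound. The $y$-differences of $(G_k)_k$ are exactly the double differences, because operators acting on different tensor factors commute. The mean terms $k=-1$, $l=-1$ only replace $c\sqrt q$ by $1+c\sqrt q$.

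Compared with the bare citation, your route shows why the order is $q=\sqrt q\cdot\sqrt q$: one subgaussian factor per parameter. Burkholder's general constant would give only $q^2$. Your example also proves the optimality assertion: $F=f\otimes f$ with $f=n^{-1/2}\sum_{j=1}^n x_j$ and $n$ large has $\|F\|_{H_q}=1$ exactly and $\|F\|_{L_q}=\|f\|_q^2\gtrsim q$.

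Two steps need one more line of justification.

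First, for predictable coefficients the bound $\|\sum_k\varepsilon_{k+1}v_k\|_q\lesssim\sqrt q\,\|(\sum_k\|v_k\|^2)^{1/2}\|_q$ is not Khintchine applied ``conditionally''. It comes from the exponential supermartingale, which gives $P(g^*>a,\ S\le b)\le 2e^{-a^2/(2b^2)}$, followed by a good-$\lambda$ argument. Predictability of $S$ controls the overshoot. Taking $\delta\asymp q^{-1/2}$ in $P(g^*>2\lambda,\ S\le\delta\lambda)\le 2e^{-c/\delta^2}P(g^*>\lambda)$ then produces the constant $O(\sqrt q)$.

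Second, in the $\ell_2$-valued step, the supermartingale $\exp(\lambda\langle g,e\rangle-\lambda^2S^2/2)$ controls only one fixed direction, not $\|g\|$. Two fixes work:
\begin{itemize}
\item Use that $\cosh(\lambda\|g_k\|)e^{-\lambda^2S_k^2/2}$ is a supermartingale. This follows from $\tfrac12\bigl(\cosh\lambda\|x+v\|+\cosh\lambda\|x-v\|\bigr)\le\cosh(\lambda\|x\|)\cosh(\lambda\|v\|)$, which holds by convexity of $u\mapsto\cosh(\lambda\sqrt u)$.
\item Reduce to the scalar inequality by pairing with a standard Gaussian vector in $\ell_2^{n+1}$. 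The Gaussian moments cost a factor $\sqrt q$ that is exactly cancelled by $\|G\|_q\asymp\sqrt q$.
\end{itemize}
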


\begin{proof}[Proof of Theorem \ref{thm:product-interpolation}] Using Lemmas \ref{lem:haar-tent} and the basic properties of complex interpolation, we write
$$\|F\|_{[L_2(\Omega_n\times\Omega_n),\BMO(\Omega_n\times\Omega_n)]_{\theta}}=\|\mathcal{C}F\|_{[\T_2,\T_{\infty}]_{\theta}}.$$
On the other hand, Lemma \ref{hardy space vs tent space} asserts that
$$\|F\|_{H_q(\Omega_n\times\Omega_n)}=\|\mathcal{C}F\|_{\T_q}.$$
The assertion follows now by combining the above equalities with Theorem  \ref{thm:tent} and Lemma \ref{lem:product-square}.
\end{proof}

\section*{Acknowledgment}

This work was supported by the National Natural Science Foundation of China
(Grant Nos. 12125109 \& W2411005); the Natural Science Foundation of Hunan
Province (Grant Nos: 2025ZYJ002, 2024JJ1010 \& 2024RC3040); the Scientific Research Fund of Hunan Provincial Education Department (Grant Nos. 25A0009
\& 25B0008). 

During the preparation of this work, the authors used GPT-5.6 Sol to improve the exposition of the manuscript and to verify calculations. After using this tool, the authors reviewed and edited the resulting content as necessary and take full responsibility for the content of the manuscript.

\end{document}